\newcommand{\N}{\mathbb{N}}
\newcommand{\R}{\mathbb{R}}
\newcommand{\Z}{\mathbb{Z}}
\newcommand{\T}{\mathbb{T}}
\newcommand{\fR}{f_R}
\newcommand{\fL}{f_L}
\newcommand{\Pe}{\text{Pe}}
\newcommand{\p}{{\bf p}}
\newcommand{\e}{{\bf e}}
\newcommand{\x}{{\bf x}}
\newcommand{\X}{{\bf X}}
\newcommand{\ud}{\mathrm{d}}
\DeclareMathOperator{\esssup}{\mathrm{esssup}}
\newcommand{\z}{\mathbf{z}}
\newcommand{\y}{\mathbf{y}}
\crefname{hypothesis}{Hypothesis}{Hypotheses}
\title{Well-posedness of an integro-differential model for active Brownian particles\thanks{
% for arXiv version, comment next line
%Submitted to the editors \today.
\funding{The first author was partially funded by a Royal Society University Research Fellowship (URF/R1/180040) and a Humboldt Research Fellowship from the Alexander von Humboldt Foundation. The second and the third author gratefully acknowledge support by the German Science Foundation (DFG) through CRC TR 154  ``Mathematical Modelling, Simulation and Optimization Using the Example of Gas Networks". AE was supported by the Advanced Grant Nonlocal-CPD of the ERC under the EU’s Horizon 2020 research and innovation programme (grant agreement No.~883363). The fourth author was funded by the Royal Society Award (RGF/EA/181043).}}}
\author{Maria Bruna\thanks{Department of Applied Mathematics and Theoretical Physics, University of Cambridge, Cambridge CB3 0WA, UK 
  (\email{bruna@maths.cam.ac.uk}).}
\and Martin Burger\thanks{Department Mathematik, Friedrich-Alexander Universit\"at Erlangen-N\"urnberg, Cauerstr. 11, D 91058 Erlangen, Germany 
  (\email{martin.burger@fau.de}).}
\and Antonio Esposito\thanks{Mathematical Institute, University of Oxford, Oxford OX2 6GG, UK (\email{antonio.esposito@maths.ox.ac.uk})}
\and Simon Schulz\thanks{Department of Mathematics, University of Wisconsin-Madison, Van Vleck Hall, 480 Lincoln Dr, Madison, WI 53706, USA, (\email{smschulz2@wisc.edu}).}}
\begin{document}

\maketitle

% REQUIRED
\begin{abstract}
  We propose a general strategy for solving nonlinear integro-differential evolution problems with periodic boundary conditions, where no direct maximum/minimum principle is available. This is motivated by the study of recent macroscopic models for active Brownian particles with repulsive interactions, consisting of advection-diffusion processes in the space of particle position and orientation. We focus on one of such models, namely a semilinear parabolic equation with a nonlinear active drift term, whereby the velocity depends on the particle orientation and angle-independent overall particle density (leading to a nonlocal term by integrating out the angular variable).
  The main idea of the existence analysis is to  exploit a-priori estimates from (approximate) entropy dissipation. 
  The global existence and uniqueness of weak solutions is shown using a two-step Galerkin approximation with appropriate cutoff in order to obtain nonnegativity, an upper bound on the overall density and preserve a-priori estimates. Our analysis naturally includes the case of finite systems, corresponding to the case of a finite number of directions. The Duhamel principle is then used to obtain additional regularity of the solution, {namely continuity in time-space.}  
Motivated by the class of initial data relevant for the application, which includes perfectly aligned particles (same orientation), we extend the well-posedness result to very weak solutions allowing distributional initial data with low regularity.  
  \end{abstract}

% REQUIRED
\begin{keywords}
active particles, space-periodic problems, parabolic equations, Galerkin approximation, periodic heat kernel
\end{keywords}

% REQUIRED
\begin{AMS}
  35K20, 35K58, 35Q70, 35Q92
\end{AMS}

\section{Introduction}

In this paper, we present a well-posedness theory (existence, uniqueness, and regularity) for an integro-differential equation describing a system of interacting active Brownian particles.
Active Brownian particles are a model system for self-propelled particles, and have wide-ranging applications such as molecular motors, bacteria, animal swarms and pedestrian flows (\textit{cf.}~\textit{e.g.}~\cite{bruna2021active,cristiani2014multiscale,gompper20202020,helbing1995social}). Denote by $\X_i\in \Omega \subset \R^2$ and $\Theta_i \in \mathbb T := \R / (2 \pi \Z)$ the position and orientation of particle number $i$, respectively. The evolution of the system with $N$ indistinguishable particles is given by the following stochastic differential equations:
\begin{subequations}\label{sde_all}
\begin{align}
\label{sde_x}
	\ud \X_i &= \sqrt{2 D_T} \ud {\bf W}_i + v_0 \e(\Theta_i) \ud t - \sum_{j\ne i} \nabla u((\X_i-\X_j)/\varepsilon)\ud t,\\
	\label{sde_angle}
	\ud \Theta_i &= \sqrt{2 D_R} \ud W_i,
\end{align}
\end{subequations}
where $D_T$ and $D_R$ are the translational and rotational diffusion coefficients, respectively, ${\bf W}_i$ and $W_i$ are $N$ independent Brownian motions in $\R^2$ and $\R$ respectively, $v_0$ is the self-propulsion speed, $\e(\theta) = (\cos \theta, \sin \theta)$, and $u$ is the pairwise interaction potential, assumed to be radially symmetric, purely repulsive and short-ranged ($\varepsilon\ll 1$). In contrast to passive Brownian particles (that is $D_R = 0$ in \cref{sde_angle}), active Brownian particles perform a biased Brownian walk in the direction given  by their orientation.

Starting from \cref{sde_all} and considering the associated Fokker--Planck equation for the joint probability density in $(\Omega \times \T)^N$, one can derive the BBGKY hierarchy of coupled equations for the marginal probabilities. Depending on the assumptions on the interaction $u$, one can obtain different continuum models for the one-particle marginal density \cite{BruBurEspSch-modelling21}. One of such models is studied by Speck et al.~\cite{Speck:2015um}, where they obtain a closed equation by assuming that the system is homogeneous and neglecting the time-dependence of the pair correlation function. Following a rescaling of time and space, leading to a rescaled velocity or P\'eclet number $\Pe = v_0/\sqrt{D_R D_T} \ge 0$, the resulting equation is (see \cite{BruBurEspSch-modelling21} for details)
\begin{align}\label{model_intro}
\partial_t f + \Pe\nabla \cdot [  f (1- \rho) \e(\theta)]
= D_e \Delta f + \partial_{\theta}^2 f, 
\end{align}
where $D_e\in (0, 1]$ is an effective translational diffusion, the unknown $f = f(t, \x, \theta)$ is the phase space density and $\rho = \rho(t, \x)$ is the spatial density obtained through
\begin{align*}
	\rho(t,\x) = \int_0^{2\pi} f(t,\x,\theta) ~\ud\theta.
\end{align*}
Both densities are scaled such that they have mass $\phi\in [0, 1)$; this represents the effective occupied fraction of particles in $\Omega$.
By integrating  \cref{model_intro} with respect to $\theta$, we obtain the following equation for $\rho$
\begin{align}\label{modelrho_intro}
\partial_t \rho + \Pe\nabla \cdot [ (1-\rho) \p ]= D_e \Delta \rho,
\end{align}
where $\p(t,\x)$ is the polarisation, defined by $\p(t,\x)=\int_0^{2\pi}\e(\theta)f(t,\x,\theta)\ud\theta$. As we shall see later on, the equation for $\rho$, \cref{modelrho_intro}, plays an important role in the analysis of \cref{model_intro} in order to determine the sign of the nonlocality (in angle) $1-\rho$, which is the reason why usual maximum/minimum principle does not work in our case. It is also interesting to consider \cref{model_intro} in one spatial dimension, $\Omega \subset \R$. In this case, the set of possible orientations reduces to $\theta = 0, \pi$, or right- and left-moving particles, and the rotational diffusion reduces to discrete jumps between these two orientations. The one-dimensional version of \cref{model_intro} reads
\begin{align}\label{GT_model}
\begin{aligned}
\partial_t \fR + \Pe\partial_x [\fR (1-\rho)] &= \partial_{xx} \fR + \fL - \fR, \\
\partial_t \fL - \Pe\partial_x [\fL (1-\rho)] &= \partial_{xx} \fL + \fR - \fL.
\end{aligned}
\end{align}
where $f_R$ and $f_L$ are the densities of right- and left-moving particles, respectively, and the space density is $\rho=f_R+f_L$. This model is derived in \cite{BruBurEspSch-modelling21} from the one-dimensional version of \cref{sde_all}, and in \cite{kourbane2018exact} as the hydrodynamic limit of an active lattice gas. In fact, the two-dimensional model \cref{model_intro} can also be derived starting from an active lattice gas model, with a partial exclusion rule (only one particle is allowed by site, but particles can exchange sites in the diffusive step). This explains why models \cref{model_intro,GT_model} present linear diffusion. If instead one considers a complete exclusion rule (resulting in an asymmetric simple exclusion process), nonlinearities of cross-diffusion type (involving terms $\rho \nabla f, f\nabla \rho$) appear in the continuum model \cite{BruBurEspSch-modelling21}. \Cref{GT_model} coincides with the crowded version (with the additional $1-\rho$ term in the advection) of the Goldstein--Taylor model \cite{goldstein1951diffusion,taylor1922diffusion}, and for this reason we refer to it as the \emph{crowded Goldstein--Taylor model}. 
We shall see that well-posedness for the above system follows directly from that of \cref{model_intro}, as consequence of our strategy based on a two-step approximation.

To the best of our knowledge, most of the results in literature focus on time-periodic evolution equations or initial-value problems with Neumann or zero-flux boundary conditions, \textit{cf.}~\textit{e.g.}~\cite{ladyzhenskaya}, while periodicity in space has not been studied in detail. Moreover, the mobility $f(1-\rho)$ in the drift term does not yield a maximum principle due to the nonlocal dependence of $\rho$ on $f$. In particular, it is not immediately clear whether one can use the theory of Ladyzhenskaya and collaborators~\cite{ladyzhenskaya}, or that established by Amann \cite{Amann84_eq_semil_para}, based on fixed point theorems.
Moreover, we note that the equation does not present a usual Wasserstein gradient flow structure, even in the absence of angular diffusion, again because of the mobility $f(1-\rho)$ in the transport drift term, \textit{cf.}~for instance, \cite{AGS08}. In this regard, we also point out that this problem does not fall in the theory of generalised Wasserstein gradient flows for equations with nonlinear mobility studied for instance in \cite{DNS09,CarLisSavSle2010} and the references therein, as the mobility in \cref{model_intro} is nonlocal in addition to being, effectively, nonlinear.

For these reasons, we provide a valuable strategy to tackle space-periodic nonlinear evolution PDEs with a nonlocal in angle drift term and periodic boundary conditions by means of a double space-discretisation purely using periodicity. More precisely, in order to prove the existence and uniqueness of solutions in the sense of \cref{def:concept of solution}, we first consider a Galerkin approximation with respect to the angular variable $\theta$. We emphasise that, in the sequel, we make the choice to expand in terms of the sine and cosine basis instead of the complex exponentials; this is so that all coefficients are real-valued, which is required to make sense of the positive part in the parabolic system that we study; \textit{cf.}~\cref{eq:parabolic system a-b} in \cref{sec:well-posedness}. Existence of solutions to such a system is proven by means of a further Galerkin approximation in the space variable, again taking into account periodicity of the problem in the choice of the basis.

In this paper we provide the first rigorous result on a macroscopic model for Brownian active particles in phase space, thus starting with the possibly simplest model without cross-diffusion effects. However, it still carries two characteristic issues of these problems, namely the nonlinear drift and the nonlocal effects due to the overall density $\rho$. Note that here we work in the setting of a continuous angular variable, with discrete angles or directions as in the one-dimensional version of the model we directly see the relation to nonlinear parabolic systems. 
We stress that this strategy may be extended to the analysis of a larger class of space-periodic evolution equations, even some cross-diffusion systems, see \cite{BruBurEspSch-modelling21}. This will indeed be the object of further investigation.
Let us mention at this point the importance of the space-periodic setting also for pattern formation effects. A closed system with no-flux boundary conditions might lead to very different stationary flux behaviour and may not lead to the phase separation phenomena observed in many microscopic simulations and discussed in \cite{BruBurEspSch-modelling21}.

This article is organised as follows. 
In \cref{sec:def-main-results} we provide the general set of assumptions and the summary of the main results. 
\Cref{sec:well-posedness} contains the proof of existence and uniqueness of weak solutions of \cref{model_intro} and \cref{GT_model}. Here we provide a detailed explanation of the strategy used, which can be potentially adapted to a larger class of space-periodic nonlinear evolution problems. More precisely, well-posedness is proven via a two-step Galerkin discretisation. This involves solving a semilinear parabolic system outside what is covered by standard parabolic theory, not due to particular complications, but rather the periodic nature of the problem. In \cref{sec:regularity} we obtain higher regularity of solutions by applying the Duhamel principle. 
%In particular, we treat the equation as a forced periodic heat equation, and our approach relies on the analysis of a suitable periodic heat kernel. 
\Cref{sec:very-weak-sol} is devoted to enlarging the class of initial data, among them those presenting a Dirac delta in the angular variable. This is relevant to the application of active Brownian particles, as it corresponds to all particles being originally perfectly aligned (same orientation). 
Finally, the appendices collect some technical results used in the paper.

%%%%%%%%%%%%%%%%%%%%%%%%%%%%%%%%%%%%%%
\section{Preliminaries and main results}
\label{sec:def-main-results}
%%%%%%%%%%%%%%%%%%%%%%%%%%%%%%%%%%%%%%

In what follows, we refer to \cref{model_intro} and \cref{GT_model} as 2D and 1D problems, respectively, in reference to the dimension of the spatial domain $\Omega$. Note that, mathematically, the two-dimensional problem \cref{model_intro} lives in the three-dimensional space $\Upsilon := \Omega \times \T$. 
We choose the spatial domain $\Omega$ to be the $d$-dimensional torus $\T^d$, where $d\in\{1,2\}$ and $\T = \R / (2 \pi \Z)$. This imposes periodic boundary conditions as considered in \cite{BruBurEspSch-modelling21, Speck:2015um}. The $2\pi$-period is chosen by mathematical convenience, so that in the two-dimensional problem $\Upsilon \equiv \T^3$. In the sequel, we say that a function $f$ is \emph{$\T^M$-periodic} (for $M\in \{1,2,3\}$) if, for all $\boldsymbol{\xi}\in\mathbb{R}^M$ and all $j\in\{1,\dots,M\}$, there holds $f(\boldsymbol{\xi})=f(\boldsymbol{\xi}+2\pi \mathbf{e}_j)$, where $\{\mathbf{e}_j\}_{j=1}^M$ are the standard unit vectors of $\mathbb{R}^M$. 

%Throughout this work, for $X = [0,2\pi)^d$ with $d\in\mathbb{N}$, we make use of the Banach spaces $L^p_{per}(X)$ ($p \geq 1$) and $H^m_{per}(X)$, for $m \in \mathbb{N}\cup \{0\}$, defined by 

\begin{definition}[Function spaces]
Let $M\in\{1,2,3\}$. We define the following Banach spaces 
\begin{equation*}
    \begin{aligned}
        & L^p_{per}(\T^M) := \lbrace f:\mathbb{R}^M \to \mathbb{R} \big| f \text{ is a.e.~} \T^M\text{-periodic, and } f \in L^p_{loc}(\mathbb{R}^M) \rbrace, \\ 
        & H^m_{per}(\T^M) := \lbrace f:\mathbb{R}^M \to \mathbb{R} \big| f \text{ is a.e.~} \T^M\text{-periodic, and } f \in H^m_{loc}(\mathbb{R}^M) \rbrace, 
    \end{aligned}
\end{equation*}
with $p \ge 1, m \in \N\cup \{0\}$ endowed with the norms $\Vert \cdot \Vert_{L^p((0,2\pi)^M)}$ and $\Vert \cdot \Vert_{H^m((0,2\pi)^M)}$, respectively. The topological dual of $H^m_{per}(\T^M)$ is denoted by $(H^m_{per})'(\T^M)$. Similarly, we define the spaces 
\begin{equation*}
    C^k_{per}(\bar{\T}^M) := \lbrace f:\mathbb{R}^M \to \mathbb{R} \big| f \text{ is } \T^M\text{-periodic, and } f \in C^k(\mathbb{R}^M) \rbrace, 
\end{equation*}
for $k \in \mathbb{N}\cup\{0\}\cup\{\infty\}$, equipped with their respective norm $\Vert \cdot \Vert_{C^k([0,2\pi]^M)}$. 
\end{definition}

For every $f,g:\Upsilon\to\R$, we use the notation $\left\langle\cdot,\cdot\right\rangle_\Upsilon$ to denote both the $L^2$ scalar product in $\Upsilon$ and duality between $(H^1)'$ and $H^1$ as a Gelfand triple with $L^2$, while for any $a,b:\Omega\to\R$ all such pairings (in $L^2$ and $H^1$) are denoted by $\left\langle a,b\right\rangle_{\Omega}$. Moreover, let us specify that, by slight abuse of notation, any curve $f:[0,T]\to B$ will map $t\in[0,T]\mapsto f(t,\cdot):=f(t)\in B$, for any Banach space $B$ we consider. 

\begin{definition}[Weak solution to 2D model]\label{def:concept of solution}
Let $f_0\in L^2_{per}(\Upsilon)$ be such that
\begin{equation}\label{eq:rho0 requirement}
    \rho_0(\mathbf{x}) := \int_0^{2\pi} f_0(\mathbf{x},\theta) \, \ud \theta \in [0,1] \qquad \text{for a.e.~}\mathbf{x}\in\Omega. 
\end{equation}
A curve $f$ is a \emph{weak solution} to \cref{model_intro} if $f\in C([0,T];L^2_{per}(\Upsilon)) \cap L^2([0,T];H^1_{per}(\Upsilon))$, 
$f'\in L^2([0,T];(H^1_{per})'(\Upsilon))$, and for a.e.~$t\in[0,T]$ and any $\varphi \in H^1_{per}(\Upsilon)$, there holds 
    \begin{equation}\label{eq:weak_form_def}
    \begin{split}
    \left\langle\partial_t f(t), \varphi\right\rangle_{\Upsilon}\!&=\!\left\langle \Pe(1\!-\!\rho(t))f(t) \e(\theta),\nabla\varphi\right\rangle_{\Upsilon}\!-\!\left\langle D_e\nabla f(t),\nabla\varphi\right\rangle_{\Upsilon}\!-\!\left\langle \partial_\theta f(t), \partial_\theta\varphi\right\rangle_{\Upsilon},\\
    f(0,\x,\theta)&=f_0(\x,\theta),
\end{split}
    \end{equation}
with periodic boundary conditions on $\Upsilon$, \mbox{where} $\rho(t,\mathbf{x})=\int_0^{2\pi}f(t,\x,\theta)\,\ud \theta$, and the initial data is achieved in the sense $f(0)=f_0$ in $L^2_{per}(\Upsilon)$. 
\end{definition}

Our first main result is the following.

\begin{theorem}[Existence and uniqueness for the 2D model]\label{thm:exist-uniq-phenom}
Let $T>0$ and $f_0\in L^2_{per}(\Upsilon)$ be nonnegative and such that $\rho_0(\x)=\int_0^{2\pi}f_0(\x,\theta)\, \ud{\theta}\in[0,1]$ for a.e.~$\x\in\Omega$. Then, there exists a unique weak solution to \cref{eq:weak_form_def} in the sense of \cref{def:concept of solution}. 
\end{theorem}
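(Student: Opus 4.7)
The plan is to construct the weak solution by a two-step Galerkin approximation combined with a truncation of the mobility, and then use the structure of the equation for $\rho$ to recover the physical bounds $f\geq 0$ and $\rho \in [0,1]$ in the limit. First I would decompose in the angular variable by expanding $f$ in a truncated real Fourier series,
\begin{equation*}
    f_N(t,\x,\theta) = \frac{a_0(t,\x)}{2} + \sum_{k=1}^{N} \bigl( a_k(t,\x)\cos(k\theta) + b_k(t,\x)\sin(k\theta) \bigr),
\end{equation*}
which turns \cref{model_intro} into a semilinear parabolic system for the coefficients $(a_k,b_k)$ on $\Omega$, coupled through the spatial density $\rho_N \propto a_0$. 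In the nonlinear drift I would substitute the truncated mobility $[f_N]_+ \bigl(1 - [\rho_N]^{1}_0\bigr)$, where $[\cdot]^{1}_0$ denotes clipping to $[0,1]$, so the resulting drift is globally Lipschitz. For the second step I would do a further Galerkin approximation in $\x$ using a $\T^d$-periodic trigonometric basis, reducing to a finite-dimensional ODE for the doubly indexed coefficients whose global solvability follows from local Lipschitz continuity plus the $L^2$ bound obtained next.

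Next I would derive a priori estimates uniform in both discretisation parameters. Testing the (doubly-discretised) equation against $f_N$ yields an $L^2$-energy estimate giving $f_N\in L^\infty_t L^2_{\x,\theta}\cap L^2_t H^1_{\x,\theta}$, since the truncated drift is uniformly bounded. Integrating the equation in $\theta$ produces a genuine scalar parabolic equation for $\rho_N$ with drift $(1-[\rho_N]^{1}_0)\p_N$, on which I would run a Stampacchia truncation argument (testing with $[\rho_N-1]_+$ and with $[-\rho_N]_+$) to show $\rho_N\in[0,1]$ as soon as $\rho_0\in[0,1]$; a parallel argument (testing with $[f_N]_-$ in the $f_N$-equation) gives $f_N\geq 0$. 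In this way the cutoffs become inactive in the approximate system itself, so that along the passage to the limit the inequality $\rho\leq 1$ and $f\geq 0$ persist. Compactness in time via an estimate on $\partial_t f_N$ in $L^2_t(H^1_{per})'$, combined with the spatial $H^1$ bound, yields strong convergence in $L^2([0,T]\times\Upsilon)$ through an Aubin--Lions lemma, which is enough to pass to the limit in the nonlinear product $(1-\rho_N)f_N\e(\theta)$ (noting also that $\rho_N\to\rho$ strongly because $\rho$ is a partial integral of $f$). The limit $f$ thus satisfies \cref{eq:weak_form_def}. The initial condition is achieved in $L^2_{per}(\Upsilon)$ by the $C([0,T];L^2_{per}(\Upsilon))$-regularity inherited from the energy estimate and the equation.

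For uniqueness, let $f_1,f_2$ be two weak solutions starting from $f_0$, set $g=f_1-f_2$, $\sigma=\rho_1-\rho_2$, and test the equation satisfied by $g$ against $g$ itself. Rewriting $(1-\rho_1)f_1-(1-\rho_2)f_2 = (1-\rho_1)g - \sigma f_2$ and using the bounds $\rho_i\in[0,1]$, $f_i\in L^2\cap L^\infty_t L^2$ together with $\|\sigma\|_{L^2_\x}\leq C\|g\|_{L^2_{\x,\theta}}$, the right-hand side is controlled by $C\|g\|_{L^2}\|\nabla g\|_{L^2}$ plus terms absorbed by the dissipation $D_e\|\nabla g\|_{L^2}^2+\|\partial_\theta g\|_{L^2}^2$ via Young's inequality. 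A Gronwall argument then yields $g\equiv 0$.

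The main obstacle is exactly the point flagged in the introduction: the nonlocal mobility $f(1-\rho)$ obstructs any direct maximum principle on the equation for $f$, so the $L^\infty$ bound $\rho\leq 1$ must be extracted from the scalar equation for $\rho$, and this bound must then be inherited by the approximation in a way that survives the two Galerkin truncations. Ensuring that the cutoffs in the mobility are compatible both with the Stampacchia argument on $\rho_N$ and with the nonnegativity test on $f_N$ (so that they are effectively inactive for the constructed approximants) is the key technical point; once the bounds $\rho_N\in[0,1]$ and $f_N\geq 0$ are secured uniformly, the remaining compactness and passage-to-the-limit are standard.
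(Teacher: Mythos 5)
Your overall strategy (two-step Galerkin with real trigonometric bases and a mobility cutoff, Aubin--Lions compactness, recovering the bounds on $f$ and $\rho$ via Stampacchia-type truncation, Gr\"onwall for uniqueness) matches the paper's architecture, but there are two places where the plan as written would not close.

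First, you claim to run the Stampacchia argument \emph{at the level of the approximants}: testing the $\rho_N$-equation with $[\rho_N-1]_+$ and $[-\rho_N]_+$, and the $f_N$-equation with $[f_N]_-$, so that the cutoffs become ``inactive in the approximate system itself.'' This cannot work. After the angular Galerkin step $f^n$ is a trigonometric polynomial in $\theta$, so $[f^n]_-$ is not in the test space $X_n$ and cannot be used as a test function; after the further space Galerkin, $\rho^{n,m}=a_0^{n,m}$ is a trigonometric polynomial in $\x$, so $[\rho^{n,m}-1]_+$ is likewise unavailable. In the paper the cutoff $(1-(\rho)_+)_+$ is kept \emph{active} throughout both approximation levels, and the truncations are shown to be inactive only for the limit $f\in L^2([0,T];H^1_{per}(\Upsilon))$, where the test functions $(1-(\rho)_+)_-$ and $(f)_-$ are admissible. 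Note also that cutting off $f_N$ itself (your $[f_N]_+$) destroys the closed-form coupling $\mathbf{J}_k^n,\mathbf{Q}_k^n$ between neighbouring Fourier modes that the paper relies on to obtain the semilinear system \cref{eq:parabolic system a-b}; the paper deliberately truncates only $\rho$, which lives in the zeroth mode $a_0^n$.

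Second, the uniqueness argument does not close when tested directly against $g=f_1-f_2$. The problematic contribution after integration by parts is $\Pe\langle\sigma f_2\e(\theta),\nabla g\rangle$ with $\sigma=\rho_1-\rho_2$. You need $\Vert\sigma f_2\Vert_{L^2(\Upsilon)}\lesssim\Vert g\Vert_{L^2(\Upsilon)}$ for Gr\"onwall, but $\Vert\sigma f_2\Vert_{L^2(\Upsilon)}^2=\int_\Omega|\sigma(\x)|^2\int_0^{2\pi}|f_2(\x,\theta)|^2\,\ud\theta\,\ud\x$ and the bound $|\sigma|\le1$ only gives $\Vert f_2\Vert_{L^2(\Upsilon)}$, not $\Vert g\Vert_{L^2(\Upsilon)}$; there is no way to extract a factor of $\|\sigma\|_{L^2_\x}$ without putting $f_2$ in $L^\infty_\theta$, which is unavailable for general $L^2$ data. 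The paper avoids this by testing instead against $L\bar f$ with $L=(-\Delta)^{-1}$ on mean-zero periodic functions, forming the weighted energy $\Vert\nabla L\bar f\Vert_{L^2(\Upsilon)}^2+\lambda\Vert\bar\rho\Vert_{L^2(\Omega)}^2$, so that the offending pairing becomes $\langle\bar\rho f_2\e(\theta),\nabla L\bar f\rangle$ with $\nabla L\bar f\in H^1(\Upsilon)\hookrightarrow L^6(\Upsilon)$, after which an $L^p$--$L^{p'}$ H\"older pairing, the uniform bound $\rho_i\in[0,1]$, and the Gagliardo--Nirenberg inequality close the estimate. You would need to replicate this $H^{-1}$-type duality (or establish an $L^\infty$ bound on $f_2$, which holds only for $L^\infty$ initial data) for the uniqueness argument to be sound.
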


\begin{remark}\label{rem-actually defined in L2 at 0}
In the proof of \cref{thm:exist-uniq-phenom} we prove that $f\in L^\infty([0,T];L^2_{per}(\Upsilon))\cap L^2([0,T];H^1_{per}(\Upsilon))$. Then, $f \in C([0,T];L^2_{per}(\Omega))$ is deduced from \cite[Ch.~3 Sec.~1.4 Lem.~1.2]{bookTemam77}, since $f \in L^2([0,T];H^1_{per}(\Upsilon))$ and $f' \in L^2([0,T];(H^1_{per})'(\Upsilon))$.
\end{remark}

An analogous theorem is obtained for the 1D model \cref{GT_model}, see \cref{thm:exist-uniq-phenom1d} in \cref{sec:well-posed-Goldstein-taylor}. 
%Let us specify that the proof of \cref{thm:exist-uniq-phenom1d} follows from one step of that of \cref{thm:exist-uniq-phenom}, since it is actually a special case of the system obtained after the first Galerkin approximation, \textit{cf.}~\cref{eq:parabolic system a-b}.
We then consider the regularity of weak solutions. More precisely, we provide the following regularity result up to the initial time.

\begin{theorem}[Regularity for the 2D model]\label{thm:regularity-3d}
Let $f_0 \in L^\infty(\Upsilon)$. Then the unique weak solution of \cref{eq:weak_form_def} provided by \cref{thm:exist-uniq-phenom} satisfies $f\in C((0,T]\times\bar{\Upsilon})$. 
Furthermore, if $f_0\in C(\bar\Upsilon)$, the unique weak solution to \cref{eq:weak_form_def} belongs to $C([0,T]\times\bar{\Upsilon})$.
\end{theorem}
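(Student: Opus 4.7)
The plan is to rewrite the equation in its mild (Duhamel) form using the periodic heat semigroup associated with the linear parabolic operator $\mathcal{L} := D_e \Delta_{\x} + \partial_\theta^2$ on $\Upsilon = \T^3$. Denote by $K_t$ the corresponding (anisotropic) periodic heat kernel and by $S(t)$ the associated semigroup on $L^p_{per}(\Upsilon)$. Setting the flux $F(s) := -\Pe(1-\rho(s))f(s)\e(\theta)$, which is bounded $L^\infty$-in-time, $L^2$-in-space since the weak solution from \cref{thm:exist-uniq-phenom} satisfies $0 \le \rho \le 1$ a.e.\ together with $f \in L^\infty([0,T];L^2_{per}(\Upsilon))$, I would test the weak formulation \cref{eq:weak_form_def} against translates of $K_{t-s}$ (justified via the Galerkin approximants built in the proof of \cref{thm:exist-uniq-phenom}, see the last paragraph) to obtain
\[ f(t,z) = (S(t) f_0)(z) + \int_0^t \bigl( \nabla_{\x} K_{t-s} *_\Upsilon F(s) \bigr)(z) \, \ud s, \]
where $*_\Upsilon$ denotes convolution on $\T^3$.

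The core heat-kernel facts I would use are $\|K_t\|_{L^1(\Upsilon)} = 1$ for all $t > 0$ and $\|\nabla_{\x} K_t\|_{L^1(\Upsilon)} \le C t^{-1/2}$ for $t \in (0,T]$, both inherited from the standard Gaussian bounds for the Euclidean heat kernel via periodisation. Combined with $|1-\rho| \le 1$, Young's convolution inequality applied to the Duhamel identity gives
\[ \|f(t)\|_{L^\infty(\Upsilon)} \le \|f_0\|_{L^\infty(\Upsilon)} + C \Pe \int_0^t (t-s)^{-1/2} \|f(s)\|_{L^\infty(\Upsilon)} \, \ud s, \]
and a singular Gronwall-type inequality (Henry's lemma) then yields the uniform bound $\|f(t)\|_{L^\infty(\Upsilon)} \le C(T) \|f_0\|_{L^\infty(\Upsilon)}$ on $[0,T]$.

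With this $L^\infty$ bound in hand, I would deduce continuity by examining each term separately. The linear part $S(t) f_0$ belongs to $C^\infty(\bar\Upsilon)$ for every $t > 0$ by hypoellipticity of $\partial_t - \mathcal{L}$, and is continuous in $t$ on $(0,T]$ by strong continuity of the semigroup. For the nonlinear Duhamel integral, splitting
\[ \int_0^t \nabla_{\x} K_{t-s} *_\Upsilon F(s) \, \ud s = \int_0^{t-\delta} \nabla_{\x} K_{t-s} *_\Upsilon F(s) \, \ud s + \int_{t-\delta}^t \nabla_{\x} K_{t-s} *_\Upsilon F(s) \, \ud s, \]
the first piece is uniformly continuous in $(t,z) \in [\delta,T] \times \bar\Upsilon$ because $\nabla_{\x} K_{t-s}$ is smooth for $s \le t-\delta$, while the second is controlled in $L^\infty$-norm by $C\sqrt{\delta}\, \|f\|_{L^\infty([0,T]\times \Upsilon)}$ and can therefore be made arbitrarily small. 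This yields $f \in C((0,T] \times \bar\Upsilon)$. When additionally $f_0 \in C(\bar\Upsilon)$, continuity extends down to $t = 0$ thanks to two standard facts: $S(t) f_0 \to f_0$ uniformly as $t \to 0^+$ for continuous data on the torus, and the Duhamel integral is bounded by $C t^{1/2} \|f\|_{L^\infty}$ and hence vanishes uniformly.

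The main obstacle I anticipate is the rigorous justification of the Duhamel identity starting only from the $L^2([0,T];H^1_{per}(\Upsilon))$ regularity provided by \cref{thm:exist-uniq-phenom}, and running the singular Gronwall argument directly in $L^\infty$ rather than bootstrapping through intermediate Lebesgue exponents. The cleanest route is to apply the mild formulation first at the level of the Galerkin approximants constructed inside the proof of \cref{thm:exist-uniq-phenom}, for which the identity holds pointwise since the approximants are smooth; derive the $L^\infty$ bound together with a Hölder-type equicontinuity estimate uniformly in the discretisation parameters; and then pass to the limit to transfer these properties to $f$, using the already-established uniqueness of the weak solution to identify the limit.
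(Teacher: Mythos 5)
Your architecture for the continuity part (Duhamel representation, delta-splitting of the singular integral for interior continuity, and strong semigroup continuity for continuous initial data) is sound and arguably more transparent than the paper's dominated-convergence argument applied directly to the singular Duhamel integrand. The substantive gap lies in Step~1, the $L^\infty$ bound. The paper proves $\|f\|_{L^\infty((0,T)\times\Upsilon)}\le\|f_0\|_{L^\infty(\Upsilon)}$ by a Stampacchia truncation: it tests the weak formulation directly with $(f-k)_+$ for $k=\|f_0\|_{L^\infty(\Upsilon)}$ and applies Gr\"{o}nwall, working on the limiting weak solution with no recourse to the Duhamel formula. You instead want to read off the $L^\infty$ bound from the mild formulation by a singular Gr\"{o}nwall inequality. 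As you acknowledge, applied directly to $f$ this is circular: one only knows $F\in L^\infty([0,T];L^2_{per}(\Upsilon))$, so Young's inequality with $\nabla_{\x}K_{t-s}\in L^1$ outputs an $L^2$ function, and the inequality $\|f(t)\|_{L^\infty}\le\|f_0\|_{L^\infty}+C\Pe\int_0^t(t-s)^{-1/2}\|f(s)\|_{L^\infty}\,\ud s$ presupposes the very bound you are trying to prove.

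Your proposed remedy --- run the singular Gr\"{o}nwall estimate on the smooth Galerkin approximants $f^{n,m}$ and pass to the limit --- does not close this gap, because the constant you would obtain is proportional to $\|f^{n,m}(0,\cdot)\|_{L^\infty(\Upsilon)}$, and this is the $L^\infty$ norm of a truncated Fourier partial sum of $f_0$. Partial Fourier sums do \emph{not} admit a uniform $L^\infty$ bound for $f_0\in L^\infty(\Upsilon)$ or even $f_0\in C(\bar\Upsilon)$: the Lebesgue constants $\|D_n\|_{L^1}$ grow like $\log n$, and there exist continuous periodic functions whose partial sums diverge pointwise. Hence $\sup_{n,m}\|f^{n,m}(0,\cdot)\|_{L^\infty(\Upsilon)}=\infty$ in general, and no bound is transmitted to the limit $f$. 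Two repairs are available: the Lebesgue-exponent bootstrap you hoped to avoid (on the three-dimensional torus $\Upsilon$, two iterations via $\|\nabla K_{t-s}\|_{L^{4/3}(\Upsilon)}\lesssim(t-s)^{-7/8}$ carry $L^2\to L^4\to L^\infty$ with integrable singularities in time), or, most simply, replacing your Step~1 with the paper's Stampacchia truncation. With either repair the rest of your proposal goes through.
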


The previous statement also holds true for the 1D model \cref{GT_model}, see \cref{sec-regularity-1d-goldstein-taylor}.

Finally, we extend the concept of solution in order to allow for initial data with a fixed orientation (see \cref{def:distributional solution}). Note that initial data of this form are of significantly lower regularity than considered in the original \cref{thm:exist-uniq-phenom}; indeed, they are distributional in the angle variable, and hence $\rho_0$ must be defined by duality. Our existence result for initial data of this kind entails defining \emph{very weak solutions}, and we refer the reader to \cref{sec:very-weak-sol} for further details; for clarity of presentation, we do not include the definition of very weak solutions here. The main result proved therein is as follows.

\begin{theorem}[Existence of very weak solutions for 2D model]\label{thm:very-weak-sec2}
Let $T>0$, $f_0$ be a nonnegative element of $L^2_{per}(\Omega;(H^1_{per})'(0,2\pi))$ and  $\rho_0(\x) := \langle f_0(\x,\cdot) , 1 \rangle \in [0,1]$ for a.e.~$\x\in\Omega$. Then there exists a very weak solution of \cref{model_intro} in the sense of \cref{def:distributional solution}, $f \in L^2([0,T];H^1_{per}(\Omega;(H^1_{per})'(0,2\pi))) \cap L^\infty([0,T];L^2(\Omega;(H^1_{per})'(0,2\pi))) \cap L^2((0,T)\times\Upsilon)$. 
\end{theorem}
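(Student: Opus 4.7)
The natural plan is to mollify the initial data in the angular variable, apply \cref{thm:exist-uniq-phenom} to the regularised data, and pass to the limit using uniform estimates in negative angular Sobolev norms. Let $\eta_\varepsilon$ be a nonnegative $2\pi$-periodic mollifier with $\int_0^{2\pi}\eta_\varepsilon=1$, and set $f_0^\varepsilon(\x,\theta):=\langle f_0(\x,\cdot),\eta_\varepsilon(\theta-\cdot)\rangle$. Since $f_0(\x,\cdot)$ is a nonnegative element of $(H^1_{per})'(0,2\pi)$, it is a nonnegative periodic Radon measure by standard results on positive distributions, so $f_0^\varepsilon\geq 0$; moreover $\int_0^{2\pi}f_0^\varepsilon\,\ud\theta=\rho_0(\x)\in[0,1]$ a.e., and $f_0^\varepsilon\in L^2_{per}(\Upsilon)$ (with an $\varepsilon$-dependent norm). \Cref{thm:exist-uniq-phenom} then provides a unique weak solution $f^\varepsilon$ with $f^\varepsilon(0)=f_0^\varepsilon$ and associated $\rho^\varepsilon\in[0,1]$ a.e.

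The crucial step is a uniform energy estimate in the angular dual norm. Let $L_\theta:=-\partial_\theta^2+I$ acting on $H^1_{per}(0,2\pi)$; it is an isomorphism onto $(H^1_{per})'(0,2\pi)$, commutes with $\nabla_\x$ and $\partial_\theta$, and induces the Hilbertian norm $\|g\|_{(H^1_{per})'}^2=\langle g,L_\theta^{-1}g\rangle$. Testing \cref{eq:weak_form_def} for $f^\varepsilon$ against $\varphi=L_\theta^{-1}f^\varepsilon$ (which belongs to $L^2([0,T];H^1_{per}(\Upsilon))$ by elliptic regularity in $\theta$) produces
\begin{equation*}
\frac{1}{2}\frac{d}{dt}\|f^\varepsilon\|_{L^2(\Omega;(H^1_{per})')}^2 + D_e\|\nabla_\x f^\varepsilon\|_{L^2(\Omega;(H^1_{per})')}^2 + \|\partial_\theta f^\varepsilon\|_{L^2(\Omega;(H^1_{per})')}^2 = \mathcal{N}^\varepsilon,
\end{equation*}
where $|\mathcal{N}^\varepsilon|\leq C\,\Pe\,\|f^\varepsilon\|_{L^2(\Omega;(H^1_{per})')}\|\nabla_\x f^\varepsilon\|_{L^2(\Omega;(H^1_{per})')}$, thanks to $|1-\rho^\varepsilon|\leq 1$ and the boundedness of multiplication by $\e(\theta)$ on $H^1_{per}(0,2\pi)$. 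Young's inequality absorbs the gradient term and Grönwall closes the estimate, yielding $\varepsilon$-uniform bounds in $L^\infty([0,T];L^2(\Omega;(H^1_{per})'))$ and in $L^2([0,T];H^1_{per}(\Omega;(H^1_{per})'))$. The Parseval identity $\|g\|_{L^2(0,2\pi)}^2\sim\|g\|_{(H^1_{per})'}^2+\|\partial_\theta g\|_{(H^1_{per})'}^2$ then upgrades the control on $\partial_\theta f^\varepsilon$ to the required bound in $L^2((0,T)\times\Upsilon)$, exhibiting parabolic smoothing in the angular variable.

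Up to a subsequence, $f^\varepsilon\weak f$ weakly in each of these three spaces. Integrating the PDE in $\theta$ gives \cref{modelrho_intro} for $\rho^\varepsilon$; combined with $\rho^\varepsilon\in[0,1]$, $\nabla_\x\rho^\varepsilon\in L^2([0,T]\times\Omega)$, and an $H^{-1}$ bound on $\partial_t\rho^\varepsilon$ read off from that equation, Aubin-Lions yields strong convergence $\rho^\varepsilon\to\rho$ in $L^2([0,T]\times\Omega)$, hence in every $L^p$ with $p<\infty$. Combining this with weak $L^2$ convergence of $f^\varepsilon$ allows one to pass to the limit in the nonlinearity $(1-\rho^\varepsilon)f^\varepsilon\e(\theta)$ against test functions smooth in the angular variable, while the linear terms pass to the limit by weak continuity; the initial datum is attained because $f_0^\varepsilon\to f_0$ in $L^2_{per}(\Omega;(H^1_{per})'(0,2\pi))$. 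The main obstacle is the rigorous justification of the negative-norm energy estimate: in particular, verifying that $L_\theta^{-1}f^\varepsilon$ is admissible in \cref{def:concept of solution} and that the chain rule for the corresponding Gelfand triple delivers the $\frac{d}{dt}$ identity, which may require an additional Galerkin or density layer analogous to the construction in \cref{sec:well-posedness}. A secondary subtlety is matching the limiting formulation to the very weak one of \cref{def:distributional solution}, which relies on the angular smoothing established above.
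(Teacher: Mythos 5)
Your plan matches the paper's strategy: regularize the initial data so that \cref{thm:exist-uniq-phenom} applies, derive an $\varepsilon$-uniform energy estimate in the negative angular Sobolev norm by testing with an inverse elliptic operator in $\theta$, extract $L^2((0,T)\times\Upsilon)$ smoothing from the angular dissipation, obtain strong compactness of $\rho^\varepsilon$ via Aubin--Lions, and pass to the limit. The two places where you deviate are genuine simplifications rather than gaps. First, the paper (\cref{lem:approximation-initial-data}) mollifies in both $\x$ and $\theta$, with the $\theta$-scale $\varepsilon^\alpha$, $\alpha>2$, chosen so that the spatial error does not destroy convergence in $L^2_{per}(\Omega;(H^1_{per})')$; you mollify only in $\theta$, which already produces an admissible $L^2_{per}(\Upsilon)$ datum and converges in the required norm by translation continuity of the angular mollifier, bypassing the rate-balancing. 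Second, the paper's auxiliary operator is $L_\theta=(-\partial_\theta^2)^{-1}$ equipped with Dirichlet boundary conditions on $(0,2\pi)$ (\cref{eq:our favorite dirichlet}), which forces them to invoke the Poincar\'e inequality and to argue that the $\theta$-integrations by parts produce no boundary contributions even though $\partial_\theta L_\theta f_m$ is not itself periodic; your choice $L_\theta=-\partial_\theta^2+I$ on $H^1_{per}(0,2\pi)$ is the Riesz isomorphism of the periodic Gelfand triple, is self-adjoint, commutes with both $\partial_\theta$ and $\nabla_\x$, and makes the identity $\Vert\partial_\theta g\Vert_{(H^1_{per})'}^2+\Vert g\Vert_{(H^1_{per})'}^2=\Vert g\Vert_{L^2}^2$ exact, so the smoothing term and the cancellations fall out more cleanly. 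The two concerns you flag at the end --- admissibility of $L_\theta^{-1}f^\varepsilon$ as a test function in \cref{def:concept of solution}, and matching the limit to \cref{def:distributional solution} --- are indeed the technical points the paper devotes space to (Steps~3 and~7 of its proof) and both close the same way for your operator, since $L_\theta^{-1}$ maps $L^2([0,T];H^1_{per}(\Upsilon))\to L^2([0,T];H^1_{per}(\Upsilon))$ boundedly by commutativity with $\nabla_\x$ and elliptic regularity in $\theta$.
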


Note that the result of \cref{thm:very-weak-sec2} implies an instantaneous smoothing phenomenon, since the initial data is distributional in the angle variable, while the solution $f$ belongs to $L^2((0,T)\times\Upsilon)$. 

\section{Well-posedness}\label{sec:well-posedness}

In this section we consider the analysis of the 1D and 2D models. We tackle the 2D model first, and show in \cref{sec:well-posed-Goldstein-taylor} that the result for the 1D model follows from that of the 2D model.

A crucial ingredient for the well-posedness of \cref{model_intro} and \cref{GT_model} is that $\rho\in[0,1]$ for all times. Equation \cref{modelrho_intro} for the density $\rho$ does not have an obvious maximum or minimum principle, since the drift term neither has a sign nor is zero. One can show that any solution to \cref{modelrho_intro} (provided it exists) satisfies $\rho\le1$, starting with an initial datum $\rho_0\in[0,1]$. However, at this early stage of the analysis, the nonnegativity of the solution is a delicate issue. For this reason we consider a variant of \cref{model_intro}, where we have $(1-(\rho)_+)_+$ in the transport term. It will turn out that we are indeed solving the original equation, \textit{i.e.} \cref{model_intro}. This final step is shown in Step 5 of the Proof of \cref{thm:exist-uniq-phenom}; \textit{cf.}~\cref{section:consistent}. The modified version of \cref{model_intro} we start with is 
\begin{equation}\label{eq:variant_model_proof}
\begin{aligned}
    &\partial_t f + \Pe\nabla \cdot (f(1- (\rho)_+)_+\e(\theta)) = D_e \Delta f + \partial_{\theta}^2 f, \\ 
&f(0,\mathbf{x},\theta) = f_0(\x,\theta), 
\end{aligned}
\end{equation}
in $\Upsilon = \T^3$, where $(a)_+=\max\{a,0\}$, for any $a\in\R$. Later we show that we recover the solution to the original equation \cref{model_intro} (see \cref{section:consistent}). We remind the reader that the nonlocality in the drift term means that there is no inherent maximum/minimum principle for \cref{eq:variant_model_proof}. Our strategy for solving the equation is to construct an approximating sequence $(f^n)_{n\in\mathbb{N}}$ whose limit solves \cref{eq:variant_model_proof}.

%%%%%%%%%%%%%%%%%%%
\subsection{Approximated system in angle} \label{sec:galerkin}
%%%%%%%%%%%%%%%%%%%%%%%%%%

We introduce the following Galerkin approximation in $\theta$. Given the periodic boundary conditions and the term $\e(\theta)$ in \cref{eq:variant_model_proof}, it is convenient to consider the  basis $\{1, \cos(k(\cdot)), \sin (k (\cdot))\}_{k\in\mathbb{N}}$ of $L^2_{per}([0,2\pi))$. Since $f_0(\x,\cdot) \in L^2_{per}([0,2\pi))$ for a.e. $\x\in\Omega$, we can write
\begin{equation}\label{eq:f0 fourier exp}
    f_0(\x,\theta) = \frac{1}{2\pi}a_0(0, \x) + \frac{1}{\pi} \sum_{k \ge 1} \big( a_k(0, \x) \cos(k\theta)  + b_k(0, \mathbf{x})\sin(k\theta)\big), 
\end{equation}
where $a_{k}(0,\cdot), b_{k}(0,\cdot)$ are the Fourier coefficients of $f_0(\cdot, \theta)$.
Note that the requirement $0 \leq \rho_0(\x) \leq 1$ (\textit{cf.}~\cref{def:concept of solution}) implies that $0 \leq a_0(0, \x) \leq 1$ for a.e.~ $\x\in\Omega$.

For some fixed  $n\in\mathbb{N}$, we consider the finite-dimensional space 
\begin{equation} \label{finite_space}
X_n = \text{span}\{\cos(k(\cdot)), \sin (k (\cdot))\}_{k=0, \dots, n}.
\end{equation}
We try to find a solution $f^n$ to the weak-formulation \cref{eq:weak_form_def} adapted to \cref{eq:variant_model_proof} of the approximated system given by
\begin{equation} \label{weak_fn}
	\left\langle\partial_t f^n, \varphi\right\rangle_{\Upsilon}\!=\!\left\langle \Pe(1\!-\!(\rho^{n})_+)_+f^n(t) \e(\theta),\nabla\varphi\right\rangle_{\Upsilon}\!-\!\left\langle D_e\nabla f^n,\nabla\varphi\right\rangle_{\Upsilon}\!-\!\left\langle \partial_\theta f^n,
	\partial_\theta\varphi\right\rangle_{\Upsilon},
\end{equation}
where $\rho^n = \int f^n \ud \theta$, for  test functions $\varphi =\chi(\theta) \psi(\x)$ with $ \chi \in X_n$ and $\psi \in H^1_\text{per}(\Omega)$. We can express $f^n$ as
\begin{equation}\label{eq:approx fn}
        f^{n}(t,\x,\theta):= \frac{1}{2\pi}a_{0}^n(t,\mathbf{x}) + \frac{1}{\pi} \sum_{k = 1}^n \big( a_{k}^n(t,\mathbf{x}) \cos(k\theta)  + b_{k}^n(t,\mathbf{x})\sin(k\theta) \big), 
    \end{equation}
where the Fourier coefficients $a_{k}^n(t, \x), b_{k}^n(t, \x)$ need to be determined. We note that $\rho^n$ above corresponds to the constant coefficient, that is,
\begin{equation}\label{eq:approx rho n}
        \rho^n(t,\x)=\int_0^{2\pi} f^n(t,\x,\theta) \, \ud\theta \equiv a_{0}^n(t,\x).
\end{equation}
From \cref{weak_fn} we obtain a $(2n+1)$-dimensional system of semilinear parabolic equations for $a_{k}^n(t, \x), b_{k}^n(t, \x)$. In particular, testing \cref{weak_fn} for each $\chi(\theta)$ in the basis of $X_n$ gives the equation for the corresponding Fourier coefficient of $f^n$. More precisely, choosing $\chi = 1$ results in
\begin{subequations}
  	\label{eq:parabolic system a-b}
\begin{equation}\label{eq:a_0-equation}
	\partial_t a_{0}^n+ \Pe\nabla\cdot\left[(1- (a_{0}^n)_+)_+ {\bf J}_0^n \right] = D_e\Delta a_{0}^n, \quad {\bf J}_0^n = (a_{1}^n,b_{1}^n).
\end{equation}
Repeating for $\chi= \cos (k \theta), \sin(k \theta)$ for $k = 1, \dots, n$ leads to
\begin{align}
  	%\partial_t a_{0}^n+ \Pe\nabla\cdot\left[(1- (a_{0}^n)_+)_+(a_{1,n},b_{1,n}) \right] & = D_e\Delta a_{0}^n,\\
  	\partial_t a_{k}^n + \frac{\Pe}{2}\nabla\cdot\left[(1- (a_{0}^n)_+)_+ {\bf J}_{k}^n \right] & = D_e\Delta a_{k}^n - k^2 a_{k}^n,\\
  	\partial_t b_{k}^n + \frac{\Pe}{2}\nabla\cdot\left[(1- (a_{0}^n)_+)_+ {\bf Q}_{k}^n  \right] & = D_e\Delta b_{k}^n - k^2b_{k}^n,
  	\end{align}
in $\T^2$, where the velocities ${\bf J}_k^n$ and ${\bf Q}_{k}^n$ are given by 
\begin{align}
\begin{aligned}
  {\bf J}_{1}^n &= (2a_{0}^n+a_{2}^n,b_{2}^n), &  {\bf Q}_{1}^n & = (b_{2}^n,2a_{0}^n - a_{2}^n),\\
  {\bf J}_{l}^n & = (a_{l+1}^n+a_{l-1}^n,b_{l+1}^n-b_{l-1}^n), & {\bf Q}_{l}^n & = (b_{l+1}^n+b_{l-1}^n,a_{l-1}^n - a_{l+1}^n),\\
  {\bf J}_{n}^n & = (a_{n-1}^n,-b_{n-1}^n), &  {\bf Q}_{n}^n & = (b_{n-1}^n,a_{n-1}^n),
  \end{aligned}
\end{align}
\end{subequations}
for $l = 2, \dots, n-1$, with periodic boundary conditions on $\Omega$ and initial conditions $a_{k}^n(0,\x) , b_{k}^n(0,\x) $ given by the corresponding Fourier coefficients of $f_0$ in \cref{eq:f0 fourier exp}.

Provided that \cref{eq:parabolic system a-b} can be solved, we obtain an explicit description of each $f^n$ in terms of its Galerkin coefficients $a_{k}^n, b_{k}^n$. In order to prove \cref{thm:exist-uniq-phenom}, we need to show the convergence (in a sense to be made precise) of the sequence $(f^n)_{n\in\mathbb{N}}$ towards some limit $f$, and we must show that $f$ solves \cref{eq:variant_model_proof} in the weak sense prescribed by \cref{def:concept of solution}. As previously mentioned, we will see that we are indeed solving \cref{model_intro}.

\subsection{Existence of solutions to the semilinear parabolic system with spatially-periodic boundary conditions}
\label{subsec:existence of parabolic system further galerkin}

In order to solve the semilinear parabolic system \cref{eq:parabolic system a-b}, we perform a further Galerkin approximation in space, with the aim of rewriting it as a system of ODEs. While the well-posedness for systems similar to \cref{eq:parabolic system a-b} is considered by Ladyzhenskaya et al.~\cite{ladyzhenskaya} and Amann \cite{Amann1985_semilin_parab}, it is not obvious how these results can be applied to space-periodic problems or to systems without a maximum principle  (resp.~$L^\infty$-bounds). This motivates our study. We define 
$$
{\bf c}^n (t,\x) = (a_0^n, a_1^n, \dots, a_n^n, b_1^n, \dots, b_n^n).
$$
Since throughout this section we keep $n\in\mathbb{N}$ fixed, in order to avoid confusion we drop the superscript $n$.

\begin{theorem}[Solution to \cref{eq:parabolic system a-b}]\label{thm:well-posed-parab}
Let ${\bf c}(0,\x) \in (L^2_{per}(\Omega))^{2n+1}$ be the initial data. For any $T>0$ there exist curves $\bf c$ weak solutions to system \cref{eq:parabolic system a-b} such that
\begin{subequations}\label{eq:regularity-solution-system}
\begin{align}
    {\bf c} &\in (L^\infty([0,T];L^2_{per}(\Omega))^{2n+1}\cap L^2([0,T];H^1_{per}(\Omega)))^{2n+1},\\
    {\bf c}' &\in (L^2([0,T];(H^1_{per})'(\Omega)))^{2n+1}.
\end{align}
\end{subequations}
With $C_{0,n}=\sum_{k=0}^n(\|a_{k}(0)\|_{L^2(\Omega)}^2+\|b_{k}(0)\|_{L^2(\Omega)}^2)$, there holds 
\begin{subequations}\label{eq:uniform-estimates-sol-syst}
\begin{align}
    \sum_{k=0}^n\left(\|a_{k}(t)\|_{L^2(\Omega)}^2+\|b_{k}(t)\|_{L^2(\Omega)}^2\right)\le C_{0,n}\exp\left( 2CT\frac{\Pe^2}{D_e}\right),
\end{align}
\begin{equation}
    \begin{split}
        \sum_{k=0}^n\int_0^T\!\!\!\! k^2\big(\|a_{k}(t)\|_{L^2(\Omega)}^2\!+\!\|b_{k}(t)\|_{L^2(\Omega)}^2\big) &\!+\!D_e \big(\|\nabla a_{k}(t)\|_{L^2(\Omega)}^2\!+\!\|\nabla b_{k}(t)\|_{L^2(\Omega)}^2\big)\ud{t}\\
        &\leq C(C_{0,n},\Pe,D_e,T), 
    \end{split}
\end{equation}
\begin{equation}
    \|a'_0\|_{L^2([0,T];(H^1_{per})'(\Omega))}\le\bar{C}(\Pe,\phi,T,C_{0,n}).
\end{equation}
\end{subequations}
\end{theorem}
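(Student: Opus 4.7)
The plan is to solve \cref{eq:parabolic system a-b} via a second Galerkin approximation in the spatial variable, exploiting that $\Omega=\T^2$ admits an $L^2$-orthonormal Fourier basis $\{w_m\}_{m\in\N}$ which is simultaneously orthogonal in $H^1_{per}(\Omega)$. For fixed $N\in\N$, I would set $V_N=\mathrm{span}\{w_1,\dots,w_N\}$ and seek approximants
\begin{equation*}
a_k^N(t,\x)=\sum_{m=1}^N \alpha_{k,m}^N(t)\,w_m(\x), \qquad b_k^N(t,\x)=\sum_{m=1}^N \beta_{k,m}^N(t)\,w_m(\x),
\end{equation*}
and project \cref{eq:parabolic system a-b} onto $V_N$ componentwise. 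The resulting finite-dimensional ODE system has a right-hand side depending linearly on the coefficients through the diffusion, reaction and linear drift vectors $\mathbf{J}_k^N,\mathbf{Q}_k^N$, multiplied by the truncation $(1-(a_0^N)_+)_+$; since $x\mapsto(1-(x)_+)_+$ is globally Lipschitz and valued in $[0,1]$, the whole right-hand side is locally Lipschitz, and the Cauchy--Lipschitz theorem delivers a unique solution on a maximal interval. Projecting $\mathbf{c}(0)$ onto $V_N$ provides initial conditions converging to $\mathbf{c}(0)$ in $L^2$.

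Next, I would derive $N$-uniform energy estimates. Testing the $a_k^N$-equation by $a_k^N$ and the $b_k^N$-equation by $b_k^N$, integrating over $\Omega$, and summing over $k=0,\dots,n$, the pointwise bound $(1-(a_0^N)_+)_+\le 1$ together with Young's inequality (applied to each $\langle(1-(a_0^N)_+)_+\mathbf{J}_k^N,\nabla a_k^N\rangle_\Omega$) absorbs half of the diffusion and yields
\begin{equation*}
\begin{split}
\tfrac{1}{2}\tfrac{\ud}{\ud t}\sum_{k=0}^n\bigl(\|a_k^N\|_{L^2}^2+\|b_k^N\|_{L^2}^2\bigr)&+\tfrac{D_e}{2}\sum_{k=0}^n\bigl(\|\nabla a_k^N\|_{L^2}^2+\|\nabla b_k^N\|_{L^2}^2\bigr)+\sum_{k=0}^n k^2\bigl(\|a_k^N\|_{L^2}^2+\|b_k^N\|_{L^2}^2\bigr)\\
&\le \tfrac{C\Pe^2}{D_e}\sum_{k=0}^n\bigl(\|a_k^N\|_{L^2}^2+\|b_k^N\|_{L^2}^2\bigr),
\end{split}
\end{equation*}
with $C$ depending only on the structure of the linear combinations defining $\mathbf{J}_k^N,\mathbf{Q}_k^N$. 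Grönwall's lemma then delivers the first bound in \cref{eq:uniform-estimates-sol-syst} uniformly in $N$, and integration in $t$ gives the second; these bounds extend the ODE solution globally on $[0,T]$.

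For the time-derivative bound, I would pair the $a_0^N$-equation with any $\varphi\in H^1_{per}(\Omega)$ and exploit $(1-(a_0^N)_+)_+\le 1$ to obtain
\begin{equation*}
\|\partial_t a_0^N\|_{(H^1_{per})'(\Omega)}\le \Pe\,\|\mathbf{J}_0^N\|_{L^2(\Omega)}+D_e\,\|\nabla a_0^N\|_{L^2(\Omega)},
\end{equation*}
which is square-integrable in time by the preceding estimates; the same calculation for each $a_k^N,b_k^N$ provides an $N$-uniform bound on $(\mathbf{c}^N)'$ in $L^2([0,T];((H^1_{per})')^{2n+1})$. Combining all these bounds with Banach--Alaoglu and the Aubin--Lions lemma, I would extract a subsequence (not relabelled) such that $\mathbf{c}^N\to\mathbf{c}$ weak-$*$ in $L^\infty([0,T];L^2)$, weakly in $L^2([0,T];H^1_{per})$, strongly in $L^2((0,T)\times\Omega)$, and $(\mathbf{c}^N)'\weak\mathbf{c}'$ in $L^2([0,T];(H^1_{per})')$, with $\mathbf{c}$ satisfying the regularity \cref{eq:regularity-solution-system}.

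The main obstacle will be passing to the limit in the nonlinear drift $(1-(a_0^N)_+)_+\mathbf{J}_k^N$. Here the strong $L^2$-convergence of $a_0^N$, the global Lipschitz continuity of $x\mapsto(1-(x)_+)_+$ and dominated convergence yield $(1-(a_0^N)_+)_+\to(1-(a_0)_+)_+$ strongly in $L^2((0,T)\times\Omega)$ while remaining uniformly bounded in $L^\infty$; combined with the weak $L^2$-convergence of the linear drifts $\mathbf{J}_k^N,\mathbf{Q}_k^N$, this suffices to identify the limit against $\nabla\varphi$ test functions. The initial datum $\mathbf{c}(0)$ is recovered in $L^2$ via \cite[Ch.~3 Sec.~1.4 Lem.~1.2]{bookTemam77} from the regularity of $\mathbf{c}$ and $\mathbf{c}'$, together with the construction of $\mathbf{c}^N(0)$, while \cref{eq:uniform-estimates-sol-syst} is inherited in the limit by weak lower semicontinuity of norms.
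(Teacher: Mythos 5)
Your proposal follows the same strategy as the paper's proof: a spatial Galerkin projection onto a Fourier basis, local existence of the resulting ODE system via Cauchy--Lipschitz, energy estimates uniform in the truncation parameter obtained by testing each component with itself and using $(1-(a_0^N)_+)_+\le1$ together with Young's inequality and Grönwall, a dual-norm estimate on the time derivatives, and extraction of limits via Banach--Alaoglu and Aubin--Lions with the strong $L^2$ convergence of $a_0^N$ (plus its uniform $L^\infty$ bound) used to pass through the cutoff nonlinearity. The only cosmetic difference is that you phrase the spatial basis abstractly as $\{w_m\}$, whereas the paper writes it out explicitly as products of sines and cosines on $\T^2$; the substance is identical.
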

\begin{proof} \textit{Step 1 (approximation of the system): } We look for an approximate solution ${\bf c}^m$ of \cref{eq:parabolic system a-b} in $X_m^2$ \cref{finite_space}
$$
X_m^2 = \text{span}\{ \cos(px),\sin(px), \cos(qy), \sin(qy) \}_{p, q = 0, \dots, m}.
$$
From the $(2n+1)$-dimensional system of PDEs \cref{eq:parabolic system a-b}, this further approximation leads to a $(2n+1)(2m+1)^2$-dimensional system of ordinary differential equations (ODEs) for the Fourier coefficients of the elements of ${\bf c}^m$. Defining the vector of unknowns by $\Lambda^{n,m}(t)$ and its initial data by $\Lambda^{n,m}_0$ (which are given explicitly in \cref{section:big ode}), we obtain an ODE of the following form:
\begin{equation}\label{eq:ODE system Lambda}
    \left\lbrace\begin{aligned}
    &\frac{d\Lambda^{n,m}}{dt}(t) = F(t,\Lambda^{n,m}(t)) \qquad \forall t \in [0,T], \\ 
    &\Lambda^{n,m}(0) = \Lambda^{n,m}_0, 
    \end{aligned}\right. 
\end{equation}
where $F$ is locally Lipschitz. Therefore the Cauchy--Lipschitz Theorem implies that, for each $m\in\mathbb{N}$, there exists a solution to \cref{eq:ODE system Lambda} in $C^1([0,T])$.

\textit{Step 2 (uniform estimates for the approximating solution): } Testing the weak formulations for $\{a_k ^{n,m},b_k ^{n,m}\}$ to perform the classical $L^2$ parabolic estimate, we obtain the following estimates for a.e. $t\in[0,T]$ :
\begin{equation}\label{eq:a_0-estimate}
     \begin{split}
     \frac{1}{2}\frac{d}{dt}\|a_0 ^{n,m} \|_{L^2(\Omega)}^2& = \left\langle\partial_ta_0 ^{n,m} ,a_0 ^{n,m} \right\rangle_{\Omega}\\
     &\le -\frac{D_e}{2}\|\nabla a_0 ^{n,m} \|_{L^2(\Omega)}^2 + \frac{\Pe^2}{2D_e}\left(\|a_1 ^{n,m} \|_{L^2(\Omega)}^2+\|b_1 ^{n,m} \|_{L^2(\Omega)}^2\right),
     \end{split}
 \end{equation}
\begin{equation}\label{eq:a_1-b_1-estimate}
    \begin{split}
    \frac{1}{2}&\frac{d}{dt} \left(\|a_1 ^{n,m} \|_{L^2(\Omega)}^2+\|b_1 ^{n,m} \|_{L^2(\Omega)}^2\right)\\
    &= - D_e\left(\|\nabla a_1 ^{n,m} \|_{L^2(\Omega)}^2 + \|\nabla b_1 ^{n,m} \|_{L^2(\Omega)}^2\right)  - \left(\|a_1 ^{n,m} \|_{L^2(\Omega)}^2 + \|b_1 ^{n,m} \|_{L^2(\Omega)}^2\right)\\
    &\quad + \frac{\Pe}{2}\left\langle( 1 \!-\! (a_0 ^{n,m})_+)_+ {\bf J}_1 ^{n,m}, \nabla a_1 ^{n,m} \right\rangle_\Omega \!+\! \frac{\Pe}{2}\left\langle( 1\! -\! (a_0 ^{n,m})_+)_+ {\bf Q}_1 ^{n,m}, \nabla b_1 ^{n,m} \right\rangle_\Omega\\
  & \leq  \!- \frac{3D_e}{4}\left(\|\nabla a_1 ^{n,m} \|_{L^2(\Omega)}^2\! +\! \|\nabla b_1 ^{n,m} \|_{L^2(\Omega)}^2\right)\!  -\! \left( \|a_1 ^{n,m} \|_{L^2(\Omega)}^2\! +\! \|b_1 ^{n,m} \|_{L^2(\Omega)}^2 \right)\\
    &\quad +\frac{4\Pe^2}{D_e}\|a_0 ^{n,m} \|_{L^2}^2 +\frac{\Pe^2}{D_e}\left(\|a_2 ^{n,m} \|_{L^2(\Omega)}^2+\|b_2 ^{n,m} \|_{L^2(\Omega)}^2\right), 
    \end{split}
\end{equation}
while, for $k\in\{2,\dots,n-1\}$, 
\begin{equation}\label{eq:a_k-b_k-estimate}
    \begin{split}
    &\frac{1}{2} \frac{d}{dt}  \left( \|a_{k} ^{n,m} \|_{L^2(\Omega)}^2 + \|b_{k} ^{n,m} \|_{L^2(\Omega)}^2 \right) \\
    & \le\! -\frac{3D_e}{4} \left( \|\nabla a_k ^{n,m} \|_{L^2(\Omega)}^2 + \|\nabla b_k ^{n,m} \|_{L^2(\Omega)}^2 \right) \! -\! k^2 \left( \|a_k ^{n,m} \|_{L^2(\Omega)}^2 + \|b_k ^{n,m} \|_{L^2(\Omega)}^2 \right)\\
    &\quad+ \frac{\Pe^2}{D_e} \left( \|a_{k+1} ^{n,m} \|_{L^2(\Omega)}^2 + \|a_{k-1} ^{n,m} \|_{L^2(\Omega)}^2  +  \|b_{k+1} ^{n,m} \|_{L^2(\Omega)}^2 + \|b_{k-1} ^{n,m} \|_{L^2(\Omega)}^2 \right),
    \end{split}
\end{equation}
and
\begin{equation}\label{eq:a_n-b_n-estimate}
    \begin{split}
    \frac{1}{2}\frac{d}{dt} \left( \|a_{n} ^{n,m} \|_{L^2(\Omega)}^2 + \|b_{n} ^{n,m} \|_{L^2(\Omega) }^2\right)
    & \le -\frac{3D_e}{4}\left(\|\nabla a_n ^{n,m} \|_{L^2(\Omega)}^2 + \|\nabla b_n ^{n,m} \|_{L^2(\Omega)}^2\right)\\
    &\quad- n^2\left(\|a_n ^{n,m} \|_{L^2(\Omega)}^2+\|b_n ^{n,m} \|_{L^2(\Omega)}^2\right)\\
    &\quad+ \frac{\Pe^2}{2D_e} \left( \|a_{n-1} ^{n,m} \|_{L^2(\Omega)}^2 + \|b_{n-1} ^{n,m} \|_{L^2(\Omega)}^2 \right).
    \end{split}
\end{equation}
By summing up Eqs.~\cref{eq:a_0-estimate}--\cref{eq:a_n-b_n-estimate}
, we obtain
\begin{equation}\label{eq:sum_up_eqs}
    \begin{split}
       &\frac{1}{2}\!\frac{d}{dt}\!\sum_{k=0}^n\!\left(\!\|a_k ^{n,m}\|_{L^2(\Omega)}^2\!+\!\|b_k ^{n,m}\|_{L^2(\Omega)}^2\!\right)\!+\!\sum_{k=0}^n\!k^2\!\left(\!\|a_k ^{n,m}\|_{L^2(\Omega)}^2\!+\!\|b_k ^{n,m}\|_{L^2(\Omega)}^2\!\right)\\
       &\!+\!D_e\!\sum_{k=0}^n\!\left(\!\|\nabla a_k ^{n,m}\|_{L^2(\Omega)}^2\!+\!\|\nabla b_k ^{n,m}\|_{L^2(\Omega)}^2\!\right)\\
       &\qquad\qquad\le C\frac{\Pe^2}{D_e}\sum_{k=0}^n\left(\|a_k ^{n,m}\|_{L^2(\Omega)}^2+\|b_k ^{n,m}\|_{L^2(\Omega)}^2\right),
    \end{split}
\end{equation}
for some positive constant $C$ independent of $n,m$ and $\Pe$. Since on the left-hand side all the quantities but the time-derivative are positive, we also have
\begin{equation*}
    \frac{d}{dt}\!\sum_{k=0}^n\!\left(\!\|a_k ^{n,m}\|_{L^2(\Omega)}^2\!+\!\|b_k ^{n,m}\|_{L^2(\Omega)}^2\!\right)\!\le \!2C\frac{\Pe^2}{D_e}\!\sum_{k=0}^n\!\left(\!\|a_k ^{n,m}\|_{L^2(\Omega)}^2\!+\!\|b_k ^{n,m}\|_{L^2(\Omega)}^2\!\right),
\end{equation*}
whence, by Gr\"{o}nwall's inequality, using also \cref{eq-eg a0km initial data for ode system} and the Plancherel Theorem, 
\begin{equation}\label{eq:LinfL2-coeff}
    \sum_{k=0}^n\left(\|a_k ^{n,m}\|_{L^2(\Omega)}^2+\|b_k ^{n,m}\|_{L^2(\Omega)}^2\right)\le C_{0,n}\exp\left( 2CT\frac{\Pe^2}{D_e}\right),
\end{equation}
where, $C_{0,n}=\sum_{k=0}^n\|a_{k}(0)\|_{L^2(\Omega)}^2+\|b_{k}(0)\|_{L^2(\Omega)}^2$, which is bounded by $\Vert f_0 \Vert_{L^2(\Upsilon)}^2$. From \cref{eq:LinfL2-coeff} we deduce that $\{a_k ^{n,m},b_k ^{n,m}\}_{k=1}^n$ are bounded in $L^\infty([0,T];L_{per}^2(\Omega))$ independently of $m$. As direct consequence, by integrating \cref{eq:sum_up_eqs} in time, 
\begin{equation}\label{eq:L2H1-coeff}
    \begin{split}
        \sum_{k=0}^nk^2\int_0^T\big(&\|a_k ^{n,m}\|_{L^2(\Omega)}^2+\|b_k ^{n,m}\|_{L^2(\Omega)}^2\big)\ud{t}\\
        &+D_e\sum_{k=0}^n\int_0^T\left(\|\nabla a_k ^{n,m}\|_{L^2(\Omega)}^2+\|\nabla b_k ^{n,m}\|_{L^2(\Omega)}^2\right)\ud{t}\\
        &\le C\frac{\Pe^2}{D_e}\sum_{k=0}^n\int_0^T\left(\|a_k ^{n,m}\|_{L^2(\Omega)}^2+\|b_k ^{n,m}\|_{L^2(\Omega)}^2\right)\ud{t} + C_{0,n} \\ 
        &\le CT\frac{\Pe^2}{D_e}C_{0,n}\exp\left(2CT \frac{\Pe^2}{D_e}\right)+C_{0,n} =: C(C_{0,n},\Pe,D_e,T), 
    \end{split}
\end{equation}
independent of $m$, and $\{a_k ^{n,m},b_k ^{n,m}\}_{k=1}^n$ are uniformly bounded in $L^\infty([0,T];L^2_{per}(\Omega))\cap L^2([0,T];H^1_{per}(\Omega))$. 
Next, we show $\|{a_{k}^m}'\|_{L^2([0,T];(H^1_{per})'(\Omega))}$, $\|{b_{k}^m}'\|_{L^2([0,T];(H^1_{per})'(\Omega))}$, for $0\le k\le n$, are uniformly bounded in $m$. 

Let us consider a test function $\psi\in H^1_{per}(\Omega)$ such that $\|\psi\|_{H^1(\Omega)}\le1$. Since $a_{0}^m$ is a weak solution to \cref{eq:a_0-equation} for $k=0$, we have for a.e. $t\in[0,T]$ 
\begin{equation}\label{eq:a_0-time-derivative-estimate}
    \begin{split}
        |\left\langle\partial_t a_{0}^{m} , \psi \right\rangle_{\Omega}|&\le D_e| \left\langle \nabla a_{0}^{m} ,\nabla\psi\right\rangle_{\Omega}|+|\left\langle \Pe(1- (a_{0}^{m})_+)_+(a_{1}^{m} ,b_{1}^{m} ),\nabla\psi\right\rangle_{\Omega}|\\
        &\le D_e\|\nabla a_{0}^{m} \|_{L^2(\Omega)}\|\nabla\psi\|_{L^2(\Omega)}+ \Pe\|a_{1}^{m} \|_{L^2(\Omega)}\|\nabla\psi\|_{L^2(\Omega)}\\
        &\quad+ \Pe\|b_{1}^{m} \|_{L^2(\Omega)}\|\nabla\psi\|_{L^2(\Omega)}\\
        &\le\max\left\{ \Pe,D_e \right\}\left(\|a_{1}^{m} \|_{L^2(\Omega)}+\|b_{1}^{m} \|_{L^2(\Omega)}+\|\nabla a_{0}^{m} \|_{L^2(\Omega)}\right).
    \end{split}
\end{equation}
By taking the supremum over all $\psi\in H^1_{per}(\Omega)$ such that $\|\psi\|_{H^1(\Omega)}\le1$, squaring and integrating in time, we obtain 
\begin{equation}\label{eq.time deriv a0 bounded}
    \begin{split}
    \int_0^T\|\partial_ta_{0}^{m}& \|_{(H^1_{per})'(\Omega)}^2\ud{t} 
    \le\bar{C}(\Pe,\phi,T,C_0),
    \end{split}
\end{equation}
since $a_{1}^{m},b_{1}^{m}$ are uniformly bounded in $L^\infty([0,T];L^2_{per}(\Omega))\cap L^2([0,T];H^1_{per}(\Omega))$, and the boundedness of $\Vert \nabla a_{0}^{m}\Vert_{L^2([0,T];L^2_{per}(\Omega))}$ follows from \cref{eq:L2H1-coeff}. 
In turn, this gives that the sequence $\|{a_{0}^m}'\|_{L^2([0,T];(H^1_{per})'(\Omega))}$ is uniformly bounded in $m$.
With a similar computation we can prove a uniform bound in $m$ for $\|{a_{k}^m}'\|_{L^2([0,T];(H^1_{per})'(\Omega))}$, $\|{b_{k}^m}'\|_{L^2([0,T];(H^1_{per})'(\Omega))}$, where $1\le k\le n$.

  \textit{Step 3 (convergence of the approximating solutions): } For $0 \leq k \leq n$, the sequence $\{a_k ^{m},b_k ^{m}\}_m$ is uniformly bounded in $m$ in
$L^\infty([0,T];L^2_{per}(\Omega))\cap L^2([0,T];H^1_{per}(\Omega))$. The Banach--Alaoglu Theorem yields that there exist subsequences $\{a_{k}^{m_l},b_{k}^{m_l}\}_l$ and curves $a_k, b_k \in L^\infty([0,T];L^2_{per}(\Omega))\cap L^2([0,T];H^1_{per}(\Omega))$, for $0\le k \le n$, such
that
\begin{subequations}\label{eq:weak conv-ab}
\begin{align}
 &a_{k}^{m_l}\overset{\ast}{\rightharpoonup} a_k, \quad b_{k}^{m_l}\overset{\ast}{\rightharpoonup} b_k \quad \mbox{ in } L^\infty([0,T];L^2_{per}(\Omega)) \text{ as } m_l \to \infty,\\
    & a_{k}^{m_l}\rightharpoonup a_k, \quad b_{k}^{m_l}\rightharpoonup b_k \quad \mbox{ in }L^2([0,T];H^1_{per}(\Omega)) \text{ as } m_l \to \infty.
\end{align}
\end{subequations}
The limits coincide since $H^1\subset L^2\equiv (L^2)'\subset (H^1)'$ and $\varphi\in L^2([0,T];L^2_{per}(\Omega))$ is a common test function. Since $\|{a_{0}^m}'\|$, $\|{a_{k}^m}'\|$, $\|{b_{k}^m}'\|$ are bounded independently of $m$, for $1\le k\le n$, in $L^2([0,T];(H^1_{per})'(\Omega))$, the Banach--Alaoglu Theorem implies 
\begin{align}\label{eq:weak conv ab dt}
    {a_{k}^{m_l}}'\overset{\ast}{\rightharpoonup} g_{a_k} \quad {b_{k}^{m_l}}'\overset{\ast}{\rightharpoonup} g_{b_k} \quad \mbox{ in } L^2([0,T];(H^1_{per})'(\Omega)),
\end{align}
for some $g_{a_k},g_{b_k} \in L^2([0,T];(H^1_{per})'(\Omega))$, $0\le k\le n$. By testing against a smooth compactly supported test function defined on $(0,T)$ and using the weak convergence $a_{k}^{m_l}\rightharpoonup a_k$, $b_{k}^{m_l}\rightharpoonup b_k$ in $L^2([0,T];H^1_{per}(\Omega))$, we get $g_{a_k}=a'_{k}$, $g_{b_k}=b'_{k}$. Applying the Aubin--Lions Lemma to the sequence $\{a_{0}^{m}\}_{m\in\mathbb{N}}$, due to the uniform bound of $\|a'_{0,m}\|_{L^2([0,T];(H^1_{per})'(\Omega))}$ from \cref{eq.time deriv a0 bounded}, $a_{0}^{m}$ strongly converges in $L^2([0,T];L^2_{per}(\Omega))$.

The strong convergence of $a_{0}^{m}$, \cref{eq:weak conv-ab}, and \cref{eq:weak conv ab dt} allow us to pass to the limit in the weak form of system \cref{eq:parabolic system a-b}, thus to obtain existence of weak solutions. We stress that strong convergence for $a_{0}^{m}$ is needed in order to allow convergence through the positive part in the drift term. Indeed, the positive part function is continuous and it grows at most linearly, thus $(1-(a_{0}^{m})_+)_+ \to (1-(a_0)_+)_+$ strongly in $L^2([0,T];L^2_{per}(\Omega))$. As consequence of the lower semicontinuity of the norm we obtain the regularity \cref{eq:regularity-solution-system} and the uniform estimates \cref{eq:uniform-estimates-sol-syst}.
\end{proof}

\subsection{Consistency of the approximating scheme}\label{section:consistent}
We now proceed with the proof of \cref{thm:exist-uniq-phenom}. We split the proof into two parts: existence and uniqueness. The proof of existence consists of five distinct steps. 
\begin{proof}[Proof of \cref{thm:exist-uniq-phenom} - existence]
First of all, let us remind the reader we consider the variant \cref{eq:variant_model_proof} of the original problem, with periodic boundary conditions on $\Upsilon$.
We will see that we recover the solution to the original problem \cref{model_intro} in Step 5. 
 
  \textit{Step 1 (approximating solution): } As previously outlined, for each $n\in\N$, we consider the approximation of \cref{eq:variant_model_proof} given by \cref{eq:approx fn} where the functions $\{a_{k}^n, b_{k}^n\}_{k=0}^n$ are a solution the semilinear parabolic equations \cref{eq:parabolic system a-b}. Note that \cref{thm:well-posed-parab} provides existence for the coefficients $\{a_{k}^n, b_{k}^n\}_{k=0}^n$ satisfying \cref{eq:regularity-solution-system} and \cref{eq:uniform-estimates-sol-syst}. In the next step we will use that, for each $n\in\N$, there holds
\begin{subequations}\label{eq:n-uniform-estimates-sol-syst}
\begin{align}\label{eq:n-l2-bound}
    \sum_{k=0}^n\left(\|a_{k}^n(t)\|_{L^2(\Omega)}^2+\|b_{k}^n(t)\|_{L^2(\Omega)}^2\right)\le C_0\exp\left( 2CT\frac{\Pe^2}{D_e}\right),
\end{align}
\begin{equation}\label{eq:nh1-bound}
    \begin{split}
        &\sum_{k=0}^nk^2\int_0^T\big(\|a_{k}^n(t)\|_{L^2(\Omega)}^2\!+\!\|b_{k}^n(t)\|_{L^2(\Omega)}^2\big)\ud{t}\\
        &+D_e\sum_{k=0}^n\int_0^T\!\left(\|\nabla a_{k}^n(t)\|_{L^2(\Omega)}^2+\|\nabla b_{k}^n(t)\|_{L^2(\Omega)}^2\right)\ud{t} \leq C(C_0,\Pe,D_e,T),
    \end{split}
\end{equation}
\begin{equation}\label{eq:n-time-deriv-bound}
    \|{a_{0}^n}'\|_{L^2([0,T];(H^1)'(\Omega))}\le\bar{C}(\Pe,D_e,T,C_0),
\end{equation}
\end{subequations}
being $C_0=\sum_{k=0}^\infty(\|a_{k}(0,\cdot)\|_{L^2(\Omega)}^2+\|b_{k}(0, \cdot) \|_{L^2(\Omega)}^2) = \Vert f_0 \Vert_{L^2(\Omega)}^2$ and $C$ a constant independent of $n$ (see Eqs.~\cref{eq:LinfL2-coeff}-\cref{eq:L2H1-coeff} in the proof of \cref{thm:well-posed-parab}).

\textit{Step 2 (uniform estimates for the approximating solution): } The uniform bound in $L^\infty([0,T];L^2_{per}(\Upsilon))$ for the approximation $f^n$ of \cref{eq:approx fn} follows directly from \cref{eq:n-uniform-estimates-sol-syst}. Indeed, or any $t\in[0,T]$, there holds
    \begin{equation*}
    \begin{split}
    \|f^n(t)\|_{L^2(\Upsilon)}^2\!\le\!\frac{1}{\pi}\|a_{0}^n(t)\|_{L^2(\Omega)}^2\!+\!\frac{4}{\pi}\sum_{k=1}^n\left(\|a_{k}^n(t)\|_{L^2(\Omega)}^2\!+\!\|b_{k}^n(t)\|_{L^2(\Omega)}^2\right)\!\le\!\frac{C_0}{\pi} e^{2CT\frac{\Pe^2}{D_e}}.
\end{split}
\end{equation*}
Note that in the penultimate inequality we used $|\sin(k\theta)|,|\cos(k\theta)|\le1$ for any $k\in\mathbb{N}$, Fubini Theorem and the orthogonality conditions for the trigonometric functions. %, namely, for $k,j\in\mathbb{N}\setminus\{0\}$
%\begin{equation*}
%    \begin{split}
%    \int_0^{2\pi}\cos(k\theta)\cos(j\theta)\,\ud\theta&=
%    \begin{cases}
%    0 \quad & k\neq j\\
%    \pi & k=j  
%    \end{cases}, \quad \int_0^{2\pi}\sin(k\theta)\sin(j\theta)\,\ud\theta=
%    \begin{cases}
%    0 \quad & k\neq j\\
%    \pi & k=j  
%    \end{cases},\\
%    &\int_0^{2\pi}\cos(k\theta)\sin(j\theta)\,\ud\theta=0.
%    \end{split}
%\end{equation*}
Similarly, we obtain a uniform bound for $f^n$ in $L^2([0,T];H^1_{per}(\Upsilon))$. Indeed, $\int_0^T\|\nabla_\xi f^n(t)\|_{L^2(\Upsilon)}^2 \, \ud{t}\!=\!\int_0^T\int_{\Upsilon}|\nabla_{\x}f^n(t,\x,\theta)|^2 \, \ud{\x} \, \ud{\theta} \, \ud{t}\!+\!\int_0^T\int_{\Upsilon}|\partial_\theta f^n(t,\x,\theta)|^2 \, \ud{\x} \, \ud{\theta} \, \ud{t}$, from which it follows that, with $\boldsymbol{\xi}=(\x,\theta)$, 
%    \begin{equation*}
%    \begin{split}
%    \int_0^T\|\nabla_\xi f^n & (t)\|_{L^2(\Upsilon)}^2 \, \ud{t} \le\frac{1}{2\pi^2}\int_0^T\int_{\Upsilon}|\nabla  a_{0}^n(t,\x)|^2 \, \ud{\x} \, \ud{\theta} \, \ud{t}\\
 %   &\quad+\frac{2}{\pi^2}\int_0^T\int_{\Upsilon}\left|\sum_{k=1}^n \nabla a_{k}^n(t,\x)\cos(k\theta)+ \nabla b_{k}^n(t,\x)\sin(k\theta)\right|^2 \, \ud{\x} \, \ud{\theta} \, \ud{t}\\
  %  &\quad+\frac{1}{\pi^2}\int_0^T\int_{\Upsilon}\left|\sum_{k=1}^n a_{k}^n(t,\x)\partial_\theta\cos(k\theta)+ b_{k}^n(t,\x)\partial_\theta\sin(k\theta)\right|^2\ud{\x} \, \ud{\theta} \, \ud{t}, 
   % \end{split}
%    \end{equation*}
 %   and, using also the Pythagoras Theorem in $L^2$ to estimate the previous right-hand side, the above is bounded by 
  %  \begin{equation*}
  %  \begin{split}
  %  \frac{1}{\pi}\int_0^T\|\nabla a_{0}^n(t)\|_{L^2(\Omega)}^2\ud{t}&+\frac{4}{\pi}\sum_{k=1}^n\int_0^T\left(\|\nabla a_{k}^n(t)\|_{L^2(\Omega)}^2+\|\nabla b_{k}^n(t)\|_{L^2(\Omega)}^2\right)\ud{t}\\
  %  &+\frac{1}{\pi^2}\sum_{k=1}^nk^2\int_0^T\int_{\Upsilon}|a_{k}^n(t,\x)|^2+|b_{k}^n(t,\x)|^2 \, \ud{\x} \, \ud{\theta} \, \ud{t}. 
   % \end{split}
%    \end{equation*}
  %  It therefore follows that, with $\boldsymbol{\xi}=(\x,\theta)$, 
    \begin{equation*}
    \begin{split}
    \int_0^T\|\nabla_{\boldsymbol{\xi}} f^n (t)\|_{L^2(\Upsilon)}^2 \, \ud{t}&\leq \frac{1}{\pi}\int_0^T\|\nabla a_{0}^n(t)\|_{L^2(\Omega)}^2\ud{t}\\
    &\quad+\frac{4}{\pi}\sum_{k=1}^n\int_0^T\left(\|\nabla a_{k}^n(t)\|_{L^2(\Omega)}^2+\|\nabla b_{k}^n(t)\|_{L^2(\Omega)}^2\right)\ud{t}\\
    &\quad+\frac{2}{\pi}\sum_{k=1}^nk^2\int_0^T\left(\|a_{k}^n(t)\|_{L^2(\Omega)}^2+\|b_{k}^n(t)\|_{L^2(\Omega)}^2\right)\ud{t}\\
    &\overset{\cref{eq:nh1-bound}}{\le}C(C_0,\Pe,D_e,T),
\end{split}
\end{equation*}
where this final constant is independent of $n$. 

Finally, we prove that $\|(f^{n})'\|_{L^2([0,T];(H^1_{per})'(\Upsilon))}$ is uniformly bounded. This will be a crucial observation in order to pass to the limit in $n$ in the equation, by applying the Aubin--Lions Lemma. Let $\varphi\in H^1_{per}(\Upsilon)$ be a test function such that $\|\varphi\|_{H^1(\Upsilon)}\le1$. Since $f^n$ is a weak solution to \cref{eq:variant_model_proof}, we obtain for a.e. $t\in[0,T]$
\begin{equation}\label{eq:time-derivative-estimate}
    \begin{split}
        |\left\langle\partial_t f^n(t), \varphi \right\rangle_{\Upsilon}|&\le D_e|\left\langle \nabla  f^n(t),\nabla \varphi\right\rangle_{\Upsilon}|+|\left\langle \partial_\theta f^n(t), \partial_\theta\varphi\right\rangle_{\Upsilon}|\\
        &\quad+|\left\langle \Pe(1- (a_{0}^n)_+)_+f^n(t) \e(\theta),\nabla \varphi\right\rangle_{\Upsilon}|\\
        &\le D_e\|\nabla f^n(t)\|_{L^2(\Upsilon)}\|\nabla \varphi\|_{L^2(\Upsilon)}+\|\partial_\theta f^n(t)\|_{L^2(\Upsilon)}\|\partial_\theta\varphi\|_{L^2(\Upsilon)}\\
        &\quad+  \Pe\|f^n(t)\|_{L^2(\Upsilon)}\|\nabla \varphi\|_{L^2(\Upsilon)}\\
        &\le\max\left\{ \Pe,D_e\right\}\left(\|f^n(t)\|_{L^2(\Upsilon)}+\|\nabla f^n(t)\|_{L^2(\Upsilon)}+\|\partial_\theta f^n(t)\|_{L^2(\Upsilon)}\right).
    \end{split}
\end{equation}
Taking the supremum over all $\varphi\in H^1_{per}(\Upsilon)$ with $\|\varphi\|_{H^1(\Upsilon)}\le1$, $\|\partial_tf^n(t)\|_{(H^1_{per})'(\Upsilon)}\le C(\Pe,D_e)\left(\|f^n(t)\|_{L^2(\Upsilon)}+\|\nabla f^n(t)\|_{L^2(\Upsilon)}+\|\partial_\theta f^n(t)\|_{L^2(\Upsilon)}\right)$, whence
\begin{equation}
    \begin{split}
    \int_0^T\|\partial_tf^n(t)\|_{(H^1_{per})'(\Upsilon)}^2\ud{t}&\le \tilde{C}(\Pe,D_e)\int_0^T\|f^n(t)\|_{H^1(\Upsilon)}^2\ud{t} \le\bar{C}(\Pe,\phi,T,C_0),
    \end{split}
\end{equation}
due to the uniform bound on $\|f^n\|_{L^2([0,T];H^1_{per}(\Upsilon))}$. Therefore, we have the desired bound for $(f^{n})'$ in $L^2([0,T];(H^1_{per})'(\Upsilon))$.

  \textit{Step 3 (convergence of the approximation): } Since the sequence $\{f^n\}_{n\in\mathbb{N}}$ of \cref{eq:approx fn} is uniformly bounded in $L^\infty([0,T];L^2_{per}(\Upsilon))\cap L^2([0,T];H^1_{per}(\Upsilon))$ we infer from the Banach--Alaoglu Theorem that there exist a subsequence $\{f^{n_k}\}_{k\in\mathbb{N}}$ and a curve $f\in L^\infty([0,T];L^2_{per}(\Upsilon))\cap L^2([0,T];H^1_{per}(\Upsilon))$ such
that 
\begin{equation}\label{eq:weak conv}
    \begin{split}
    &f^{n_k}\overset{\ast}{\rightharpoonup} f \quad \mbox{ in } L^\infty([0,T];L^2_{per}(\Upsilon)), \quad f^{n_k}\rightharpoonup f \quad \mbox{ in }L^2([0,T];H^1_{per}(\Upsilon)).
    \end{split}
\end{equation}
The limits coincide since $H^1\subset L^2\equiv (L^2)'\subset (H^1)'$ and any $\varphi\in L^2([0,T];L^2_{per}(\Upsilon))$ is a common test function. The uniform boundedness of $\|(f^n)'\|_{L^2([0,T];(H^1_{per})'(\Upsilon))}$ and the Banach--Alaoglu Theorem also yields that 
\begin{align}\label{eq:weak conv dt}
    (f^{n_k})'\overset{\ast}{\rightharpoonup} f' \quad \mbox{ in } L^2([0,T];(H^1_{per})'(\Upsilon)), 
\end{align}
where we verify that this latter limit is indeed $f'$ by testing against a smooth compactly supported test function defined on $(0,T)$ and using the weak convergence $f^{n_k}\rightharpoonup f$ in $L^2([0,T];H^1_{per}(\Upsilon))$. Henceforth, we do not relabel further subsequences. 

Additionally, the aforementioned boundedness of $\|(f^n)'\|_{L^2([0,T];(H^1_{per})'(\Upsilon))}$  yields, taking a further subsequence if necessary, by an application of the Aubin--Lions Lemma that $f^{n}\to f$ strongly in $L^2([0,T];L^2_{per}(\Upsilon))$ as $n\to\infty$. We take a further subsequence such that, for a.e.~$t\in[0,T]$, $f^{n}(t)\to f(t)$ strongly in $L^2_{per}(\Upsilon)$ as $n\to\infty$. Moreover, by further applying the theorem of Banach-Alaoglu and the Aubin--Lions Lemma to the sequence of curves $\{a_{0}^n\}_{n\in\mathbb{N}}$, due to the uniform bounds from \cref{eq:n-uniform-estimates-sol-syst}, we also know that $a_{0}^n$ strongly converges in $L^2([0,T];L^2_{per}(\Omega))$. In other words, up to a subsequence, $\{\rho^n\}_{n\in\mathbb{N}}$ converges strongly in $L^2([0,T];L^2_{per}(\Omega))$. Moreover, defining $\rho := \int_0^{2\pi}f \, \ud \theta$, where $f$ is the aforementioned strong limit in $L^2([0,T];L^2_{per}(\Omega))$, we have, for the convergent subsequence in question, 
\begin{equation*}
    \begin{aligned}
        \Vert \rho - \rho^n \Vert_{L^2([0,T];L^2(\Omega))}^2 &= \int_0^T \int_\Omega \bigg|\int_0^{2\pi} \big( f(t,\x,\theta) - f^n(t,\x,\theta) \big) \, \ud \theta \bigg|^2 \, \ud x \, \ud t \\ 
        &\leq 2\pi \Vert f - f^n \Vert^2_{L^2([0,T];L^2(\Upsilon))} \to 0 \qquad \text{as } n \to \infty, 
    \end{aligned}
\end{equation*}
where we applied Jensen's inequality, thus $\rho^n \to \rho$ strongly in $L^2([0,T];L^2_{per}(\Omega))$. 

This allows us to pass to the limit in the weak form of \cref{eq:variant_model_proof}, thus to obtain that $f$ is a weak solution to the revised \cref{eq:variant_model_proof}. We stress that strong convergence for $a_{0}^n$, \textit{i.e.}, $\{\rho^n\}_{n\in\mathbb{N}}$, is needed in order to allow convergence through the positive part in the drift term. Indeed, the positive part function is continuous and it grows at most linearly, thus $(1-(\rho^n)_+)_+ \to (1-(\rho)_+)_+$ strongly in $L^2([0,T];L^2_{per}(\Omega))$.
More precisely, for a function $\psi\in C^1([0,T])$ such that $\psi(0)=\psi(T)=0$, there holds (using, \textit{e.g.}, \cite[App.~E Th.~8]{evans}) $\int_0^T \langle \partial_t f^n(t),\varphi \rangle_\Upsilon \psi(t) \,\ud t = - \int_0^T \langle f^n(t) , \varphi \rangle_\Upsilon \psi'(t) \,\ud t$, and hence 
\begin{align*}
    \int_0^T\!\langle f^n(t), \!\varphi\rangle_{\Upsilon} \psi'(t)\ud{t}\!=\!&-\!\int_0^T\!\left\langle \Pe(1\!-\! (a_{0}^n(t))_+\!)_+\!f^n(t) \e(\theta),\!\nabla\varphi\right\rangle_{\Upsilon}\!\psi(t)\ud{t}\\
    &+\!D_e\! \int_0^T\!\left\langle \nabla f^n(t),\!\nabla\varphi\right\rangle_{\Upsilon}\!\psi(t)\ud{t}\!+\!\int_0^T\!\left\langle \partial_\theta f^n(t),\! \partial_\theta\varphi\right\rangle_{\Upsilon}\!\psi(t)\ud{t}.
\end{align*}
Using \cref{eq:weak conv}-\cref{eq:weak conv dt}, the aforementioned strong convergence $(1-(\rho^n)_+)_+ \to (1-(\rho)_+)_+$ in $L^2(0,T;L^2_{per}(\Omega))$, we pass to the limit $n\to+\infty$ in the above and obtain, after another integration by parts in $t$, 
\begin{align*}
    \int_0^T\!\left\langle \partial_t f(t), \varphi\right\rangle_{\Upsilon}\psi(t)\ud{t}&\!=\!\int_0^T\left\langle \Pe(1- (a_{0}(t))_+)_+f(t) \e(\theta),\nabla\varphi\right\rangle_{\Upsilon}\psi(t)\ud{t}\\
    &\quad-D_e \int_0^T\left\langle \nabla f(t),\nabla\varphi\right\rangle_{\Upsilon}\psi(t)\ud{t}-\int_0^T\left\langle \partial_\theta f(t), \partial_\theta\varphi\right\rangle_{\Upsilon}\psi(t)\ud{t}.
\end{align*}
Hence, since $\psi$ was arbitrary, $f \in L^2([0,T];H^1_{per}(\Upsilon))$ with $f' \in L^2([0,T];(H^1_{per})'(\Upsilon))$ solves \cref{eq:variant_model_proof} weakly (\textit{i.e.}~duality with $H^1_{per}(\Upsilon)$ for a.e.~$t\in[0,T]$). 

\textit{Step 4 (initial data): } Recall that $f_0$ has Fourier expansion \cref{eq:f0 fourier exp} and $f^n(0,\cdot,\cdot) \to f_0$ strongly in $L^2_{per}(\Upsilon)$. Moreover, by extracting a subsequence after applying the Aubin--Lions Lemma under \cref{eq:weak conv dt}, $f^n(t) \to f(t)$ strongly in $L^2_{per}(\Upsilon)$ for a.e.~$t\in[0,T]$. By \cref{rem-actually defined in L2 at 0}, $f^n,f \in C([0,T];L^2_{per}(\Upsilon))$. Hence, $t=0$ is a Lebesgue point, and $f^n(0) \to f(0)$ strongly in $L^2_{per}(\Upsilon)$. By uniqueness of limits, $f(0) = f_0$ in $L^2_{per}(\Upsilon)$.

  \textit{Step 5 (solution to the original equation and nonnegativity): } As direct consequence of the previous step, by integrating the equation \cref{eq:variant_model_proof} in the angle variable, we infer that $\rho(t,\x)=\int_0^{2\pi}f(\x,t,\theta)\ud{\theta}$ is a curve belonging to $L^\infty([0,T];L^2_{per}(\Omega))\cap L^2([0,T];H^1_{per}(\Omega))$, and is a weak solution of 
\begin{equation}\label{eq:pos part twice}
\partial_t\rho+\Pe\nabla\cdot((1-(\rho)_+)_+\p)= D_e \Delta\rho.
\end{equation}
Moreover, by the weak-* lower semicontinuity of the norm, $\rho'\in L^2([0,T];(H^1_{per})'(\Omega))$ as it is the weak-* limit curve of the sequence $\{{a_{0}^n}'\}_{n\in\N}$. 
Begin by noting that $(1-(\rho)_+)_- = -(1-\rho\mathds{1}_{\rho>0})\mathds{1}_{\rho > 1}$, which implies that, in the sense of distributions, $\partial_t (1-(\rho)_+)_- = \partial_t \rho \mathds{1}_{\rho > 1}$ and $\nabla (1-(\rho)_+)_- = \nabla \rho \mathds{1}_{\rho > 1}$. Testing \cref{eq:pos part twice} with $(1-(\rho)_+)_-$, 
\begin{equation*}
    \frac{1}{2}\frac{d}{dt}\int_\Omega (1-(\rho)_+)_-^2 \, \ud \x = \Pe \int_\Omega \nabla (1-(\rho)_+)_- \cdot (1-(\rho)_+)_+ \p \, \ud \x - D_e\int_\Omega |\nabla \rho|^2 \mathds{1}_{\rho > 1} \, \ud \x. 
\end{equation*}
Observe that the first term on the right-hand side is null, since the supports of the two terms in the integrand are disjoint, while the final term is nonpositive. We thereby deduce $\frac{1}{2}\int_\Omega (1-(\rho(t))_+)_-^2 \, \ud \x \leq 0$, since we initially have $(1-\rho_0(\x))_-=0$ for a.e.~$x\in\Omega$. As a result, $\rho(t,\x) \leq 1$ for a.e.~$(t,\x)\in (0,T)\times\Omega$, as required.

Returning to \cref{eq:variant_model_proof} and testing with $(f)_-$, we get, after integrating by parts, 
\begin{equation*}
    \begin{aligned}
        \frac{1}{2}\!\frac{d}{dt}\!\int_\Upsilon \!(f)_-^2 \ud \boldsymbol{\xi}\! &=\! \Pe\! \int_\Upsilon \!\nabla (\!f\!)_- \!\cdot\! (\!f\!)_- (1\!-\!(\rho)_+\!)_+ \e(\theta) \ud \boldsymbol{\xi} \!-\! D_e \int_\Upsilon |\nabla (\!f\!)_-|^2 \, \ud \boldsymbol{\xi} \!-\! \int_\Upsilon\! |\partial_\theta (\!f\!)_-|^2 \ud \boldsymbol{\xi} \\ 
        &\leq C(\Pe,D_e) \int_\Upsilon \frac{1}{2}(f)_-^2 \, \ud \boldsymbol{\xi} - \frac{1}{2}D_e \int_\Upsilon |\nabla (f)_-|^2 \, \ud \boldsymbol{\xi} - \int_\Upsilon |\partial_\theta (f)_-|^2 \, \ud \boldsymbol{\xi}, 
        \end{aligned}
\end{equation*}
where $\boldsymbol{\xi}=(\boldsymbol{\x},\theta)$. Dropping the two nonpositive terms, Gr\"{o}nwall's Lemma implies 
\begin{equation*}
    \int_\Upsilon (f(t,\x,\theta))_-^2 \, \ud \boldsymbol{\xi} \leq \exp(Ct) \int_\Upsilon (f_0(\x,\theta))_-^2 \, \ud \boldsymbol{\xi} = 0 \qquad \text{a.e.}~t \in (0,T), 
\end{equation*}
since $f_0$ is nonnegative a.e.~on $\Upsilon$. Hence, $f(t,\x,\theta) \geq 0$ for a.e.~$(t,\x,\theta)\in \Upsilon\times(0,T)$ and thus the space density $\rho(t,\x)\in[0,1]$ for a.e. $(t,\x)\in \Upsilon\times(0,T)$. Hence the limiting curve $f$ is a weak solution in the sense of \cref{def:concept of solution} to the original equation \cref{model_intro}. We emphasise that $f \in C([0,T];L^2_{per}(\Upsilon))$ is deduced from $f \in L^2([0,T];H^1_{per}(\Upsilon))$ and $f' \in L^2([0,T];(H^1_{per})'(\Upsilon))$; \textit{cf.}~\cref{rem-actually defined in L2 at 0}. 
\end{proof}

\subsection{Uniqueness of solutions}\label{section:uniqueness}

\begin{lemma}
Given $f_0 \in L^2_{per}(\Upsilon)$ nonnegative such that its corresponding $\rho_0$ satisfies $\rho_0(\x) = \int_0^{2\pi} f_0(\x,\theta) \, \ud \theta \in [0,1]$ for a.e.~$\x\in\Omega$, there exists at most one periodic weak solution, in the sense of \cref{def:concept of solution}, of the problem \cref{eq:weak_form_def}. 
\end{lemma}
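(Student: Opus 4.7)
The plan is to take two weak solutions $f_1, f_2$ of \cref{eq:weak_form_def} with the same initial datum $f_0$, set $w := f_1 - f_2$ and $\rho_i := \int_0^{2\pi} f_i\, \ud\theta$, and close a Grönwall estimate for $\|w(t)\|_{L^2(\Upsilon)}^2$. Subtracting the two weak formulations and using the algebraic decomposition $(1-\rho_1)f_1 - (1-\rho_2)f_2 = (1-\rho_1)w + (\rho_2-\rho_1)f_2$, the curve $w$ satisfies
\begin{align*}
\langle \partial_t w, \varphi\rangle_\Upsilon = \Pe\langle [(1-\rho_1)w + (\rho_2-\rho_1)f_2]\e(\theta), \nabla\varphi\rangle_\Upsilon - D_e \langle \nabla w, \nabla\varphi\rangle_\Upsilon - \langle \partial_\theta w, \partial_\theta\varphi\rangle_\Upsilon
\end{align*}
for every $\varphi \in H^1_{per}(\Upsilon)$. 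I would first check (as in Step 5 of the proof of \cref{thm:exist-uniq-phenom}, which is purely a consequence of the equation and hence applies to any weak solution) that $\rho_i(t,\x)\in[0,1]$ and $f_i\geq 0$ a.e.; in particular $|1-\rho_1|\leq 1$.

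Next I would test with $\varphi = w$, which is admissible thanks to the regularity stated in \cref{def:concept of solution} and \cite[Ch.~3 Sec.~1.4 Lem.~1.2]{bookTemam77}, obtaining
\begin{align*}
\tfrac12 \tfrac{d}{dt} \|w\|_{L^2(\Upsilon)}^2 + D_e\|\nabla w\|_{L^2(\Upsilon)}^2 + \|\partial_\theta w\|_{L^2(\Upsilon)}^2 = I_1 + I_2,
\end{align*}
with $I_1 := \Pe\langle (1-\rho_1)w\e(\theta), \nabla w\rangle_\Upsilon$ and $I_2 := \Pe\langle (\rho_2-\rho_1)f_2\e(\theta), \nabla w\rangle_\Upsilon$. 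Because $|1-\rho_1|\leq 1$, Cauchy--Schwarz and Young immediately give $|I_1| \leq \tfrac{D_e}{4}\|\nabla w\|_{L^2}^2 + \tfrac{\Pe^2}{D_e}\|w\|_{L^2}^2$, absorbing a harmless fraction of the dissipation.

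The main obstacle is controlling $I_2$, where neither $f_2$ nor $\rho_2-\rho_1$ enjoys an $L^\infty$ bound. The key observation I would exploit is that $\rho_2 - \rho_1 = \int_0^{2\pi} w\, \ud\theta$ is a function of $\x$ alone on the two-dimensional torus $\Omega = \T^2$. Jensen's inequality yields $\|\rho_2-\rho_1\|_{L^p(\Omega)} \leq C\|w\|_{L^p(\Upsilon)}$ (and analogously for gradients), so the two-dimensional Gagliardo--Nirenberg inequality on $\Omega$ gives $\|\rho_2-\rho_1\|_{L^4(\Omega)}^2 \leq C\|w\|_{L^2(\Upsilon)}\|\nabla w\|_{L^2(\Upsilon)}$. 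Combining this with H\"older in $\theta$ and Cauchy--Schwarz against $\nabla w$ produces
\begin{align*}
|I_2| \leq C\Pe \|f_2\|_{L^4(\Upsilon)}\|w\|_{L^2(\Upsilon)}^{1/2}\|\nabla w\|_{L^2(\Upsilon)}^{3/2},
\end{align*}
and Young's inequality with conjugate exponents $(4/3,4)$ absorbs a second quarter of the dissipation, leaving the remainder $C(\Pe,D_e)\|f_2\|_{L^4(\Upsilon)}^4\|w\|_{L^2}^2$. To close the estimate I would use that $\Upsilon=\T^3$ is three-dimensional, so Sobolev embedding $H^1(\Upsilon)\hookrightarrow L^6(\Upsilon)$ and interpolation give $\|f_2\|_{L^4(\Upsilon)}^4 \leq C\|f_2\|_{L^2(\Upsilon)}^{5/2}\|f_2\|_{H^1(\Upsilon)}^{3/2}$, and the regularity $f_2\in L^\infty([0,T];L^2_{per}(\Upsilon))\cap L^2([0,T];H^1_{per}(\Upsilon))$ guaranteed by \cref{def:concept of solution}, together with H\"older in time, makes this quantity integrable on $[0,T]$. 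Thus $\tfrac{d}{dt}\|w\|_{L^2}^2 \leq g(t)\|w\|_{L^2}^2$ with $g\in L^1([0,T])$, and Grönwall's inequality combined with $w(0)=0$ forces $w\equiv 0$. The crux of the argument, as anticipated, is the nonlocal-in-angle nonlinearity in $I_2$; the trick is to trade low regularity in $(\x,\theta)$ for higher regularity in $\x$ alone by angular averaging, and then to recover the missing integrability of $f_2$ from the three-dimensional Sobolev embedding.
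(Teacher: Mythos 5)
Your overall strategy (subtract the two solutions, test with $w=f_1-f_2$ in $L^2$, and exploit that $\bar\rho := \rho_2-\rho_1$ is $\theta$-independent so that angular averaging plus two-dimensional Gagliardo--Nirenberg on $\Omega$ gains spatial integrability) is a reasonable and genuinely different idea from the paper's. However, there is a real gap in the final step, hidden by an arithmetic error in the interpolation exponent. After absorbing the dissipation, your estimate for $I_2$ produces the Gr\"onwall coefficient $g(t) = C\|f_2(t)\|_{L^4(\Upsilon)}^4$. Interpolating $L^4$ between $L^2$ and $L^6$ on the three-dimensional torus $\Upsilon$ gives $\theta$ determined by $\tfrac14=\tfrac{\theta}{2}+\tfrac{1-\theta}{6}$, i.e.\ $\theta=\tfrac14$, hence
\[
\|f_2\|_{L^4(\Upsilon)}^4 \;\lesssim\; \|f_2\|_{L^2(\Upsilon)}\,\|f_2\|_{H^1(\Upsilon)}^3,
\]
and \emph{not} $\|f_2\|_{L^2}^{5/2}\|f_2\|_{H^1}^{3/2}$ as you wrote. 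Since a weak solution only has $f_2\in L^\infty([0,T];L^2)\cap L^2([0,T];H^1)$, the right-hand side lies in $L^{2/3}([0,T])$ but not in $L^1([0,T])$. Equivalently, the parabolic interpolation space available here is $L^{10/3}((0,T)\times\Upsilon)$, not $L^4$. So $g\notin L^1([0,T])$ and the Gr\"onwall argument does not close; the conclusion $w\equiv0$ does not follow. (Note also that you cannot rescue this by invoking the pointwise bound $|\bar\rho|\le1$: however one redistributes the H\"older exponents between $\bar\rho$ and $f_2$, the exponent on $\|f_2\|_{L^q}$ that makes the $\|w\|_{L^2}^2$ factor come out to the power two always forces $q=4$ with power four, returning to the same obstruction.)

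This is exactly the obstruction the paper's proof is designed to avoid. Instead of testing with $\bar f$, the paper tests with $L\bar f=(-\Delta)^{-1}\bar f$ together with $\bar\rho$, i.e.\ it runs Gr\"onwall on a weighted combination of the $H^{-1}$-type seminorm $\|\nabla L\bar f\|_{L^2}^2$ and $\|\bar\rho\|_{L^2}^2$. The extra derivative gained by $L$ shows up as $\nabla L\bar f\in H^1(\Upsilon)\hookrightarrow L^6(\Upsilon)$, so the problematic nonlinear term $\Pe\langle\bar\rho f_2\e(\theta),\nabla L\bar f\rangle$ can be estimated with $\|\nabla L\bar f\|_{L^{p'}}\lesssim\|\bar f\|_{L^2}$ for $p'\le 6$, leaving only a \emph{single} power of $\|f_2\|_{L^q(\Upsilon)}$ with $q\le6$ to control. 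This produces a Gr\"onwall coefficient proportional to $\|f_2\|_{H^1(\Upsilon)}^2$, which \emph{is} in $L^1([0,T])$, and the argument closes. If you wish to keep your $L^2$-energy framework you would need either an a priori bound $f_2\in L^4((0,T)\times\Upsilon)$, which the weak solution class does not supply, or a dualized test function as in the paper; the latter is the more robust route here.
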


\begin{proof}
Let us consider two weak solutions to \cref{model_intro}, $f_1, f_2$, in the sense of \cref{def:concept of solution}, and set $\bar{f}:=f_1-f_2$ to be their difference. We refer to $\rho_1, \rho_2$ as the corresponding space densities, and to $\bar{\rho}:=\rho_1-\rho_2$ as their difference. By means of a direct computation, $\bar{f}$ is a weak solution to
\begin{align*}
\partial_t \bar{f} + \Pe\nabla \cdot ((1- \rho_1)\bar{f} \e(\theta)-\bar{\rho}f_2\e(\theta))=D_e \Delta \bar{f} + \partial_{\theta}^2 \bar{f},
\end{align*}
and $\bar{\rho}$ solves $\partial_t \bar{\rho} + \Pe\nabla \cdot ( (1-\rho_1) \bar{\p}-\bar{\rho}\p_2)= D_e \Delta \bar{\rho}$, where we remind the reader $\bar{\p}(t,\x)=\int_0^{2\pi}\bar{f}(t,\x,\theta)\ud{\theta}$ and $\p_2(t,\x)=\int_0^{2\pi}f_2(t,\x,\theta)\ud{\theta}$, for any $(t,\x,\theta)\in[0,T] \times \Upsilon$. Note that, by directly integrating the previous equation for $\bar{\rho}$, $\int_\Omega \bar{\rho}(\x) \ud \x = 0$, and thus $\int_\Omega \int_0^{2\pi} \bar{f}(t,\x,\theta) \, \ud \theta \, \ud \x = 0$. Motivated by this latter ``mean-zero condition'', we consider the space 
$$
H^1_z(\Upsilon):=\left\{f\in H^1(\Upsilon): \int_\Upsilon f(\xi)\ud\xi=0\right\},
$$
and its dual $(H_z^1)'(\Upsilon)$, where subscript $z$ refers to the mean-zero property in question. Let $L:=(-\Delta)^{-1}:(H^1_z)'(\Upsilon)\to (H^1_z)(\Upsilon)$ be the inverse of the solution operator for the Poisson equation with periodic boundary condition, that is $Lg=u$, where $\int_\Upsilon\nabla u\cdot\nabla\varphi=\left\langle g,\varphi\right\rangle$ for all $\varphi\in H^1_z(\Upsilon)$.
In the following we shall use that $L$ maps continuously $L^2_z(\Upsilon)$ into $H^2_z(\Upsilon)$. Moreover, we note the self-adjointness property 
\[
\left\langle \varphi, L\varphi \right\rangle=\left\langle(-\Delta)\circ(-\Delta)^{-1} \varphi, L\varphi \right\rangle=\|\nabla L\varphi\|_{L^2(\Upsilon)}^2 = \langle L\varphi , \varphi \rangle,
\]
for every $\varphi \in H^1_z(\Upsilon)$, where the duality product is understood in the sense of $L^2(\Upsilon)$. Similarly, using the commutativity of the partial derivatives, for any $\varphi \in H^1_z(\Upsilon)$, 
\begin{equation*}
    \langle \partial_\theta \varphi , \partial_\theta L \varphi \rangle = \langle (-\Delta)\partial_\theta L \varphi ,  \partial_\theta L \varphi \rangle = \Vert \nabla \partial_\theta L \varphi \Vert^2_{L^2(\Upsilon)}. 
\end{equation*}
Let $\lambda>0$. For the next lines of computation, we use \cref{def:concept of solution}, Young's inequality, the bound $\rho_1\le1$, and the self-adjointness property. We drop the time dependence for ease of presentation. We obtain 
\begin{equation}\label{eq:uniq-gronw-1}
    \begin{split}
    \frac{1}{2}\!\frac{\ud}{\ud{t}}\!\left(\!\|\nabla L\bar{f}\|_{L^2(\Upsilon)}^2\!+\!\lambda\|\bar{\rho}\|_{L^2(\Omega)}^2\!\right)&\!=\left\langle\partial_t\bar{f},L\bar{f}\right\rangle+\lambda\left\langle\partial_t\bar{\rho},\bar{\rho}\right\rangle\\
        &\!=\!-D_e\!\|\bar{f}\|_{L^2(\Upsilon)}^2\!-\!\|\nabla\partial_\theta L\bar{f}\|_{L^2(\Upsilon)}^2\!-\!\lambda \! D_e\!\|\nabla\bar{\rho}\|_{L^2}^2\\
        &\quad\!+\!\Pe\left\langle(1-\rho_1)\bar{f}\e(\theta),\nabla L \bar{f}\right\rangle\!-\!\Pe \lambda\!\left\langle\bar{\rho}\p_2,\!\nabla \bar{\rho}\right\rangle\\
        &\quad+\!\Pe \lambda\left\langle(1\!-\!\rho_1)\bar{\p},\nabla \bar{\rho}\right\rangle\!-\!\Pe\!\left\langle\bar{\rho}f_2\e(\theta),\!\nabla L \bar{f}\right\rangle\\
        &\!\le\!-D_e\left(1-\frac{1}{2\varepsilon}-\frac{\lambda}{\sigma}\pi\right)\|\bar{f}\|_{L^2(\Upsilon)}^2\\
        &\quad-\!\lambda\!\left(\!D_e\!-\!\frac{\Pe^2\sigma}{D_e}\right)\!\|\nabla\bar{\rho}\|_{L^2(\Omega)}^2 \!+\!\frac{\lambda D_e}{2\sigma}\|\bar{\rho}\|_{L^2(\Omega)}^2\\
        &\quad+\frac{\Pe^2\varepsilon}{2D_e}\|\nabla L\bar{f}\|_{L^2(\Upsilon)}^2\!-\!\Pe\!\left\langle\bar{\rho}f_2\e(\theta),\!\nabla L \bar{f}\right\rangle, 
    \end{split}
\end{equation}
where we also used $\|\bar{\p}\|_{L^2(\Omega)}^2\le2\pi\|\bar{f}\|_{L^2(\Upsilon)}^2$, and noticed that for any $(t,\x)\in\Omega\times[0,T]$, $|\p_2(t,\x)|\le\int_0^{2\pi}f_2(t,\x,\theta)\ud{\theta}=\rho_2(t,\x)\le1 $, which implies $\|\bar{\rho}\p_2\|_{L^2(\Omega)}^2\le\|\bar{\rho}\|_{L^2(\Omega)}^2$. In order to apply the Gr\"{o}nwall inequality and conclude the argument we must estimate the final term in \cref{eq:uniq-gronw-1}. Since the operator $L$ maps continuously $L^2_z(\Upsilon)$ into $H^2_z(\Upsilon)$, we have $L\bar{f}\in H^2_z(\Upsilon)$, thus $\nabla L\bar{f}\in H^1_z(\Upsilon)$. By means of the Sobolev embedding $H^1(\Upsilon) \hookrightarrow L^{p_*}(\Upsilon)$ for $p_*=6$, and that $|\Upsilon|<+\infty$, we infer that $\nabla L\bar{f}\in L^{p'}(\Upsilon)$ for any $p'\le p_*=6$, and $f_2\in L^q(\Upsilon)$ for any $q\le p_*=6$. In particular, for $\frac{6}{5}\le p\le\frac{3}{2}$ and $q=\frac{2p}{2-p}\le 6$
, using the Cauchy--Young inequality to get from the second line to the third, and the Gagliardo--Nirenberg inequality to get the final line, we see that the term $\Pe\left|\left\langle\bar{\rho}f_2\e(\theta),\nabla L \bar{f}\right\rangle\right|$ is bounded by 
\begin{equation}\label{eq:uniq-sob-embedding}
\begin{split}
    \Pe\|\bar{\rho}f_2\|_{L^p(\Upsilon)}\|\nabla L\bar{f}\|_{L^{p'}(\Upsilon)} &\le C(\Pe,|\Upsilon|,p',p_*)\|\bar{f}\|_{L^2(\Upsilon)}\|\bar{\rho}f_2\|_{L^p(\Upsilon)}\\
   % &\le \frac{\lambda D_e}{2}\|\bar{f}\|_{L^2(\Upsilon)}^2+\frac{c}{\lambda D_e}\|f_2\|_{L^q(\Upsilon)}^2\|\bar{\rho}\|_{L^2(\Omega)}^2\\
    &\le \frac{\lambda D_e}{2}\|\bar{f}\|_{L^2(\Upsilon)}^2+\frac{\tilde{c}}{\lambda D_e}\|f_2\|_{H^1(\Upsilon)}^2\|\bar{\rho}\|_{L^2(\Omega)}^2,
\end{split}
\end{equation}
%where the constants $\lambda,c,\tilde{c}$ depend only on 
where the constants $\lambda,\tilde{c}$ depend only on 
$\Upsilon,p,q$ via the embeddings. Using \cref{eq:uniq-sob-embedding} in \cref{eq:uniq-gronw-1}, we get 
\begin{equation*}
    \begin{split}
     & \frac{1}{2}\frac{\ud}{\ud{t}}\left(\|\nabla L\bar{f}\|_{L^2(\Upsilon)}^2+\lambda\|\bar{\rho}\|_{L^2(\Omega)}^2\right) \\ & \qquad \le-D_e\left(1-\frac{\lambda}{2}-\frac{1}{2\varepsilon}-\frac{\lambda}{\sigma}\pi\right)\|\bar{f}\|_{L^2(\Upsilon)}^2 
      -\lambda\left(D_e-\frac{\Pe^2\sigma}{D_e}\right)\|\nabla\bar{\rho}\|_{L^2(\Omega)}^2\\
        &\qquad \quad+\frac{\Pe^2\varepsilon}{2D_e}\|\nabla L\bar{f}\|_{L^2(\Upsilon)}^2+\frac{\lambda D_e}{2\sigma}\|\bar{\rho}\|_{L^2(\Omega)}^2 
        +\frac{\tilde{c}}{\lambda D_e}\|f_2\|_{H^1(\Upsilon)}^2\|\bar{\rho}\|_{L^2(\Omega)}^2\\
        &\qquad \le \frac{M(f_2)}{\lambda^2\sigma}\left(\|\nabla L\bar{f}\|_{L^2(\Upsilon)}^2+\lambda\|\bar{\rho}\|_{L^2(\Omega)}^2\right),
    \end{split}
\end{equation*}
where we set $\sigma=\frac{D_e^2}{\Pe^2}$,$\lambda=({1+\frac{\Pe^2}{D_e^2}\pi})^{-1}$, $\varepsilon=\frac{1}{\lambda}$, and $M(f_2):=\max\{\frac{\Pe^2\lambda\sigma}{2D_e},\frac{\lambda^2D_e}{2}+\frac{\tilde{c}\sigma}{D_e}\|f_2\|_{H^1(\Upsilon)}^2\}$. Finally, since $f_2\in L^2([0,T];H^1_{per}(\Upsilon))$, by means of Gr\"{o}nwall inequality we obtain uniqueness, as for any $t\in[0,T]$
\[
\|\nabla L\bar{f}(t)\|_{L^2(\Upsilon)}^2+\lambda\|\bar{\rho}(t)\|_{L^2(\Omega)}^2\le\underbrace{\left(\|\nabla L\bar{f}(0)\|_{L^2(\Upsilon)}^2+\lambda\|\bar{\rho}(0)\|_{L^2(\Omega)}^2\right)}_{=0}e^{2\int_0^t\frac{M(f_2(s))}{\lambda^2\sigma}\ud{s}}.
\]
\end{proof}

%%%%%%%%%%%%%%%%%%%
\subsection{Existence and uniqueness of solutions to the 1D Model}\label{sec:well-posed-Goldstein-taylor}

The analysis carried out previously for \cref{model_intro} can be applied to obtain well-posedness for the 1D model \cref{GT_model}. Therefore, we will not provide proofs but only point out the main differences.

Let $\fL(x,t)$ and $\fR(x,t)$ be the densities of the left- and right-moving particles, respectively. The (rescaled) one dimensional version of \cref{model_intro} is
\begin{equation}\label{1d-phen}
\begin{cases}
\partial_t f_R + \Pe\partial_x (f_R (1-\rho)) = \partial_{xx} f_R + f_L - f_R \\
\partial_t f_L - \Pe\partial_x (f_L (1-\rho)) = \partial_{xx} f_L + f_R - f_L.
\end{cases}
\end{equation}
The \emph{space density} and the \emph{polarisation} are, respectively,
\[
\rho:=f_R+f_L, \qquad p:= f_R-f_L.
\]
\begin{definition}[Weak solution to 1D model]\label{def:1d-phen-sol}
Let $f_{R,0},f_{L,0}\in L^2_{per}([0,2\pi])$ be nonnegative functions such that
\[
\rho_0(x)=f_{R,0}(x)+f_{L,0}(x) \in [0,1],\qquad \mbox{for a.e } x\in[0,2\pi].
\]
A \emph{weak solution} to system \cref{1d-phen} is a pair of curves $(f_R,f_L)$ such that $f_R,f_L\in L^2([0,T];H^1_{per}([0,2\pi]))$ and $f'_R, f'_L\in L^2([0,T];(H^1_{per})'([0,2\pi]))$, and for a.e. $t\in[0,T]$ and any $\varphi,\phi \in H^1_{per}([0,2\pi])$ it holds
    \begin{equation*}\label{eq:weak-1d-phen}
    \begin{split}
    &\langle\partial_t f_R(t),\varphi\rangle_\Omega\! =\! \Pe\langle f_R(t) (1\!-\!\rho(t)),\partial_x\varphi\rangle_\Omega\!-\!\langle\partial_{x} f_R(t),\partial_x\varphi\rangle_\Omega\!+\langle f_L(t)\! -\! f_R(t),\varphi\rangle_\Omega \\
    &\langle\partial_t f_L(t),\phi\rangle_\Omega \!=\!- \Pe\langle f_L(t) (1\!-\!\rho(t)) ,\partial_x\phi\rangle_\Omega \!-\!\langle\partial_{x} f_L(t),\partial_x\phi\rangle_\Omega\!+ \langle f_R(t)\! -\! f_L(t),\phi\rangle_\Omega,\\
    &(f_R,f_L)(x,0)=(f_{R,0},f_{L,0}),
    \end{split}
    \end{equation*}
    with periodic boundary conditions on $\Omega$, where $\rho(t,x)=f_R(t,x)+f_L(t,x)$, and the initial data is achieved in the sense $f_R(0)=f_{R,0}$ and $f_L(0)=f_{L,0}$ in $L^2_{per}([0,2\pi])$. 
\end{definition}

%%%%%%%%%%%%%%%%%

Notice that adding/subracting the equations in system \cref{1d-phen} yields 
\begin{equation}\label{eq:system-rho-p}
\begin{cases}
    \partial_t \rho + \Pe\partial_x ( p (1-\rho)) =  \partial_{xx} \rho\\
    \partial_t p + \Pe\partial_x (\rho (1-\rho)) =  \partial_{xx} p - 2 p.
\end{cases}
\end{equation}

As already mentioned, existence and uniqueness of solutions to \cref{1d-phen} can be proved with a strategy similar to that used for \cref{model_intro}. The main difference is the absence of the angular variable, which simplifies the discretisation of the system; indeed the angular variable is already discretised, as there are only two possible directions (left and right). It therefore suffices to write a Galerkin approximation only in space, as in \cref{subsec:existence of parabolic system further galerkin}. Moreover, the solution $(\fR, \fL)$ directly inherits the bound of the density $\rho$ (unlike in the higher dimensional case). This simplifies the uniqueness proof, since we need not employ the operator $(-\Delta)^{-1}$. The result is stated below. 

\begin{theorem}[Well-posedness for 1D model]\label{thm:exist-uniq-phenom1d}
Let $(f_{R,0},f_{L,0})\in (L^2_{per}([0,2\pi]))^2$ be a pair of nonnegative functions such that
\[
\rho_0(x)=f_{R,0}(x)+f_{L,0}(x)\in[0,1] \quad \mbox{for a.e. } x\in [0,2\pi].
\]
For any $T>0$, there exists a unique weak solution to \cref{1d-phen} in the sense of \cref{def:1d-phen-sol}. Moreover, $f_R(t,x),f_L(t,x)\in[0,1]$ for a.e. $t\in[0,T]$ and $x\in [0,2\pi]$.
\end{theorem}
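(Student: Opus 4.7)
The plan is to mirror the strategy deployed for the 2D model in \cref{thm:exist-uniq-phenom}, exploiting the considerable simplification afforded by the fact that the angular variable is already discretised into two states. Because no expansion in $\theta$ is needed, only a single Galerkin discretisation in the spatial variable $x$ is required, as in \cref{subsec:existence of parabolic system further galerkin}. I would begin by replacing the mobility $1-\rho$ with the truncated $(1-(\rho)_+)_+$ in both equations of \cref{1d-phen}, project onto $X_m = \mathrm{span}\{\cos(px),\sin(px)\}_{p=0}^m$, and obtain a finite-dimensional ODE system for the Fourier coefficients of $f_R^m$ and $f_L^m$. The Cauchy--Lipschitz theorem provides local existence; global existence on $[0,T]$ then follows once energy estimates are in hand.

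The a-priori estimates follow the same pattern as Step 2 of \cref{thm:exist-uniq-phenom}: testing each equation against $f_R^m$ and $f_L^m$, using Young's inequality to absorb the drift contribution into the diffusion (noting that the truncated mobility is bounded by $1$), and adding the resulting inequalities yields a Gr\"{o}nwall bound in $L^\infty([0,T];L^2_{per})\cap L^2([0,T];H^1_{per})$. The linear reaction term $f_L - f_R$ (resp.~$f_R - f_L$) produces only lower-order terms that are trivially absorbed. A dual estimate, analogous to \cref{eq:time-derivative-estimate}, gives $(f_R^m)', (f_L^m)' \in L^2([0,T];(H^1_{per})')$ uniformly in $m$, so Banach--Alaoglu combined with Aubin--Lions provides weak-$*$ and strong $L^2$ convergence of subsequences to limits $f_R, f_L$. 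The strong convergence of $\rho^m = f_R^m + f_L^m$ in $L^2([0,T];L^2_{per})$ is what allows passage to the limit through the truncation $(1-(\rho^m)_+)_+ \to (1-(\rho)_+)_+$.

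To recover the untruncated equation and the pointwise bounds, I would sum the limit equations to obtain \cref{eq:system-rho-p} (with $(1-(\rho)_+)_+$ in place of $1-\rho$), test the equation for $\rho$ with $(1-(\rho)_+)_-$, and observe (exactly as in Step 5 of \cref{thm:exist-uniq-phenom}) that the drift contribution vanishes due to disjoint supports, yielding $\rho(t,x)\le 1$ a.e. Nonnegativity of $f_R$ and $f_L$ is obtained by testing the individual equations with $(f_R)_-$ and $(f_L)_-$ respectively and applying Gr\"{o}nwall's inequality; the coupling term $f_L - f_R$ (resp.~$f_R - f_L$) is handled by a standard use of Cauchy--Young. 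The upper bound $f_R,f_L \le 1$ is then automatic, since $f_R \le f_R + f_L = \rho \le 1$ (and symmetrically for $f_L$), which is the substantive improvement over the 2D situation.

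For uniqueness, the pointwise bound $f_R, f_L \in [0,1]$ eliminates the need for the inverse Laplacian gymnastics of \cref{section:uniqueness}. Setting $\bar{f}_R = f_{R,1} - f_{R,2}$ and $\bar{f}_L = f_{L,1} - f_{L,2}$, testing the difference equations directly with $\bar{f}_R$ and $\bar{f}_L$, and using $|1-\rho_1| \le 1$ and $|f_{R,2}|,|f_{L,2}| \le 1$ to bound the cross terms by Cauchy--Young, I expect a Gr\"{o}nwall estimate for $\|\bar{f}_R\|_{L^2}^2 + \|\bar{f}_L\|_{L^2}^2$ giving uniqueness immediately. The main technical point to watch is the strong convergence of $\rho^m$ needed to pass through the truncation, but this is delivered by the Aubin--Lions compactness in exactly the same way as the 2D argument; no genuinely new obstacle arises.
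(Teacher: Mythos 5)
Your proposal is correct and follows essentially the same route that the paper describes: a single spatial Galerkin approximation (since the angle is already discrete), the same truncation $(1-(\rho)_+)_+$, passage to the limit via Aubin--Lions, recovery of $\rho\le 1$ via the disjoint-supports trick, nonnegativity by testing with negative parts, and the observation that the pointwise bound $f_R,f_L\le 1$ follows for free from $0\le f_R,f_L$ and $\rho\le 1$, which in turn replaces the $(-\Delta)^{-1}$ machinery in the uniqueness proof. One small point worth making explicit in the nonnegativity step: when testing the $f_R$-equation with $(f_R)_-$, the cross term $\langle f_L,(f_R)_-\rangle$ cannot be closed by Cauchy--Young alone, since $\Vert f_L\Vert_{L^2}$ is not controlled by $\Vert(f_L)_-\Vert_{L^2}$; you need to split $f_L=(f_L)_+-(f_L)_-$, discard the sign-definite piece $\langle(f_L)_+,(f_R)_-\rangle\ge 0$, apply Cauchy--Young only to $\langle(f_L)_-,(f_R)_-\rangle$, and then add the two Gr\"onwall inequalities for $(f_R)_-$ and $(f_L)_-$ before closing the argument. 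This is implicit in your phrasing but deserves to be spelled out.
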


\begin{remark}\label{rem:continuity in L2 only 1d}
As per \cref{rem-actually defined in L2 at 0}, we deduce $f_R,f_L \in C([0,T];L^2_{per}([0,2\pi]))$. 
\end{remark}

\section{Regularity}\label{sec:regularity}

Having established well-posedness in \cref{sec:well-posedness}, we turn to the issue of regularity. More precisely, we provide here the proof of \cref{thm:regularity-3d}, which is performed in \cref{lem:regularity 2D,lem:regularity-2d-final}. For ease of presentation we start with the one-dimensional model.

\subsection{Regularity for the 1D model}\label{sec-regularity-1d-goldstein-taylor}

Our strategy is to apply the Duhamel principle so as to treat \cref{1d-phen} as a forced one-dimensional heat equation. Inspired by \cite[Chapter 9]{cannon}, we make the following definition. 

\begin{definition}\label{def:periodic heat kernel}
Define $\Phi : (0,\infty)\times \mathbb{R} \to \mathbb{R}$ by the explicit formula 
\begin{equation}\label{eq:periodic heat kernel def}
    \Phi(t,x) := \frac{1}{2\pi} + \frac{1}{\pi}\sum_{n=1}^\infty e^{-n^2 t} \cos nx \qquad \forall (t,x) \in (0,\infty)\times \mathbb{R}. 
\end{equation}
\end{definition}

Note that the above may be rewritten in terms of the complex exponential basis as $\Phi(t,x) = \frac{1}{2\pi}+\sum_{n\in\mathbb{Z}\setminus \{0\}} \frac{e^{-n^2 t}}{2\pi} e^{inx}$ for all $(t,x) \in (0,\infty)\times\mathbb{R}$. In what follows it will be useful to rewrite $\Phi$ in terms of the Jacobi theta-3 function (denoted by $\theta_3$), and in terms of the Jacobi theta function (denoted by $\vartheta$, also sometimes by $\vartheta_{00}$), 
\begin{equation}\label{eq:in terms of jacobi}
    \Phi(t,x) = \frac{1}{2\pi}\theta_3\left(\frac{x}{2},e^{-t}\right) = \frac{1}{2\pi}\vartheta\left(\frac{x}{2\pi},\frac{it}{\pi}\right) \qquad \forall (t,x) \in (0,\infty) \times \mathbb{R}. 
\end{equation}

\begin{lemma}\label{lem:properties of periodic kernel}
The function $\Phi$ of \cref{def:periodic heat kernel} is a smooth $2\pi$-periodic function that satisfies $\partial_t \Phi - \partial_{xx}\Phi = 0 $ as a pointwise equality in $(0,\infty)\times \mathbb{R}$, is nonnegative in $(0,\infty)\times\mathbb{R}$, and $\Vert \Phi(t,\cdot) \Vert_{L^1([0,2\pi])} = 1$ for every $t \in (0,\infty)$. Additionally, 
\begin{equation}\label{eq:L2L2 for kernel}
    \int_0^t \Vert \Phi(\tau,\cdot) \Vert^2_{L^2([0,2\pi])} \, \ud \tau = t + \sum_{n=1}^\infty \frac{1}{2n^2}(1-e^{-2n^2 t}) \qquad \forall t > 0, 
\end{equation}
and $\int_0^t \int_0^{2\pi} \Phi(t-s,x-y)^2 \, \ud y \, \ud s = \int_0^t \Vert \Phi(\tau,\cdot) \Vert^2_{L^2([0,2\pi])} \, \ud \tau$ for all $(t,x) \in (0,T]\times[0,2\pi]$. Finally, for any $2\pi$-periodic $C^2$ function $\psi$, $\lim_{t\to0^+}\int_0^{2\pi}\Phi(t,x)\psi(x) \, \ud x = \psi(0)$. 
\end{lemma}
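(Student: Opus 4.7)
The plan is to exploit the fact that $\Phi$ coincides with the $2\pi$-periodization of the one-dimensional Gaussian heat kernel $G_t(x) := (4\pi t)^{-1/2} e^{-x^2/(4t)}$; several of the claimed properties are transparent in one viewpoint but not the other, so I will switch freely between the Fourier series \cref{eq:periodic heat kernel def} and the Poisson summation identity
\begin{equation*}
    \Phi(t,x) = \sum_{k\in\mathbb{Z}} G_t(x - 2\pi k) \qquad \forall (t,x) \in (0,\infty)\times\mathbb{R}.
\end{equation*}
This identity is precisely the modular transformation of the Jacobi theta function recorded in \cref{eq:in terms of jacobi}, and can also be obtained directly from Poisson summation applied to $G_t$, whose Fourier transform is $\widehat{G_t}(\xi) = e^{-\xi^2 t}$.

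Smoothness of $\Phi$ on $(0,\infty)\times\mathbb{R}$ and $2\pi$-periodicity in $x$ are immediate from \cref{eq:periodic heat kernel def}: on any slab $[t_0,\infty)\times\mathbb{R}$ with $t_0>0$, the series and all its termwise $(t,x)$-derivatives are dominated by $\sum_n P(n) e^{-n^2 t_0}$ for suitable polynomials $P$, so they converge uniformly and differentiation under the summation is justified. The pointwise heat equation $\partial_t \Phi = \partial_{xx}\Phi$ then follows from the elementary identity $\partial_t(e^{-n^2 t}\cos nx) = -n^2 e^{-n^2 t}\cos nx = \partial_{xx}(e^{-n^2 t}\cos nx)$. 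The nonnegativity claim is where I expect the only real subtlety: the series \cref{eq:periodic heat kernel def} has no obvious sign, so here the Poisson summation representation is crucial, writing $\Phi$ as a sum of nonnegative Gaussians and giving $\Phi \geq 0$ at once.

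For the mass-one property, integrating \cref{eq:periodic heat kernel def} term by term on $[0,2\pi]$ annihilates every cosine and leaves $\int_0^{2\pi}\Phi(t,x)\,\mathrm{d}x = 1$. For the $L^2$ formula, I apply Parseval on $[0,2\pi]$ using the orthogonality of $\{1,\cos(n\cdot)\}_{n\geq 0}$ to express $\|\Phi(t,\cdot)\|_{L^2([0,2\pi])}^2$ explicitly in terms of $\sum_n e^{-2n^2 t}$ plus a constant, then integrate in $\tau$ and use $\int_0^t e^{-2n^2\tau}\,\mathrm{d}\tau = (1-e^{-2n^2 t})/(2n^2)$ to obtain \cref{eq:L2L2 for kernel}. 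The identity $\int_0^t\!\int_0^{2\pi}\Phi(t-s,x-y)^2\,\mathrm{d}y\,\mathrm{d}s = \int_0^t\|\Phi(\tau,\cdot)\|^2_{L^2([0,2\pi])}\,\mathrm{d}\tau$ then follows from the changes of variable $\tau = t-s$ and $y' = x - y$, combined with the $2\pi$-periodicity of $\Phi(\tau,\cdot)^2$ which ensures the $y'$-integral over any period equals the integral over $[0,2\pi]$.

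For the delta-function behaviour as $t\to 0^+$, I would expand the $2\pi$-periodic $C^2$ function $\psi$ in its real Fourier series on $[0,2\pi]$; the $C^2$ hypothesis gives coefficients decaying as $O(n^{-2})$ and, in particular, absolute summability, so that $\psi(0)$ equals the sum of its Fourier coefficients by pointwise Fourier inversion. Orthogonality against \cref{eq:periodic heat kernel def} produces
\begin{equation*}
    \int_0^{2\pi}\Phi(t,x)\psi(x)\,\mathrm{d}x \;=\; \sum_{n=0}^\infty e^{-n^2 t} \hat\psi_n,
\end{equation*}
an absolutely convergent series uniformly in $t>0$ dominated by $\sum_n|\hat\psi_n|<\infty$. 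Dominated convergence on the counting measure then gives the limit $\sum_n \hat\psi_n = \psi(0)$ as $t\to 0^+$, completing the lemma.
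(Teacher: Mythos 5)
Your proposal is correct, and it differs from the paper's proof at exactly one point: the nonnegativity of $\Phi$. You establish $\Phi\geq0$ via Poisson summation (equivalently, the Jacobi imaginary transformation), writing $\Phi(t,x)=\sum_{k\in\mathbb{Z}}G_t(x-2\pi k)$ as a periodization of the one-dimensional Gaussian heat kernel $G_t$, whose terms are individually nonnegative. The paper instead invokes the Jacobi triple-product representation $\Phi(t,x)=\frac{1}{2\pi}\prod_{n\geq1}(1-e^{-2nt})(1+2e^{-(2n-1)t}\cos x+e^{-(4n-2)t})$ and checks each factor is nonnegative using $1+e^{-(4n-2)t}\geq 2e^{-(2n-1)t}$. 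Both routes are classical theta-function identities and both settle the sign question in one line once the identity is in hand; yours is arguably more transparent to a PDE reader (a sum of Gaussians is nonnegative for free, and the periodization picture also makes the approximate-identity behaviour as $t\to0^+$ geometrically obvious, though you did not exploit that), while the paper's choice keeps everything within the power-series/product calculus of $\theta_3$ that it has already introduced. Everything else — term-by-term differentiation justified by uniform estimates on slabs $t\geq t_0>0$, orthogonality giving the unit mass, Parseval and termwise time-integration for \cref{eq:L2L2 for kernel}, periodicity-based change of variables for the constant-$L^2$-norm identity, and Fourier expansion of $\psi$ with $O(n^{-2})$ coefficient decay plus dominated convergence on the counting measure for the $t\to0^+$ limit — matches the paper's argument step for step. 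One small point worth making explicit in your last paragraph: the identity $\sum_n\hat\psi_n=\psi(0)$ needs the uniform (hence pointwise) convergence of the Fourier series of $\psi$, which your $O(n^{-2})$ decay does supply; stating this briefly would close the loop.
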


The proof of \cref{lem:properties of periodic kernel} is in \cref{sec:complementary-sec-regularity}. We use the function $\Phi$ as a periodic heat kernel, in the sense made precise by the following result. The initial data $\psi$ is chosen in $L^1_{loc}(\mathbb{R})$ to justify the expansion in terms of the sine/cosine basis. 

\begin{lemma}\label{lem:generate periodic solutions}
Let $\psi \in L^1_{per}([0,2\pi])$. Then, $\Psi(t,x) := \int_0^{2\pi} \Phi(t,x-y)\psi(y) \, \ud y$ for all $(t,x) \in (0,\infty)\times\mathbb{R}$ is a smooth $2\pi$-periodic function defined on the full-space $(0,\infty)\times\mathbb{R}$, which solves the Cauchy problem 
\begin{equation}
    \left\lbrace\begin{aligned}
        & \partial_t \Psi - \partial_{xx} \Psi = 0 \qquad &&\text{in } (0,\infty)\times\mathbb{R}, \\ 
        & \Psi|_{t=0} = \psi \qquad && \text{on } \mathbb{R}, 
    \end{aligned}\right.
\end{equation}
where the initial data is achieved in $L^1_{loc}$, \textit{i.e.}, $\lim_{t \to 0^+} \Vert \Psi(t,\cdot) - \psi \Vert_{L^1([0,2\pi])} = 0$. 
\end{lemma}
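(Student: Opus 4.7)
The plan is to divide the proof into three parts: regularity/periodicity of $\Psi$, verification that $\Psi$ solves the heat equation, and convergence to the initial data in $L^1_{loc}$. The first two are essentially immediate from \cref{lem:properties of periodic kernel} via differentiation under the integral; the real work is the last step, which is an approximate-identity argument complicated by the fact that $\psi$ is only assumed $L^1$.

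For smoothness and periodicity, I would fix any $t_0 > 0$ and observe that from the series representation \cref{eq:periodic heat kernel def}, all space--time derivatives $\partial_t^j \partial_x^k \Phi$ have Fourier coefficients decaying like $n^{2j+k} e^{-n^2 t}$, which are uniformly bounded in $[t_0,\infty) \times \R$. Hence for every $j,k$ the function $(t,x,y) \mapsto \partial_t^j \partial_x^k \Phi(t,x-y)\psi(y)$ has a dominating $L^1$ majorant (in $y$) that is locally uniform in $(t,x)$, and iterated dominated convergence lets me commute derivatives with the integral. Periodicity of $\Psi(t,\cdot)$ follows from that of $\Phi(t,\cdot)$ by translating the integration variable. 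The heat equation $\partial_t \Psi = \partial_{xx}\Psi$ then reduces to the corresponding identity for $\Phi$, already established in \cref{lem:properties of periodic kernel}.

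The main obstacle is the initial data. The key input is Young's inequality for the periodic convolution: since $\Phi(t,\cdot) \ge 0$ and $\Vert \Phi(t,\cdot)\Vert_{L^1([0,2\pi])} = 1$, one has $\Vert \Psi(t,\cdot)\Vert_{L^1([0,2\pi])} \le \Vert \psi \Vert_{L^1([0,2\pi])}$ uniformly in $t > 0$, and more generally the map $\psi \mapsto \Psi(t,\cdot)$ is a contraction on $L^1_{per}([0,2\pi])$. I would first handle the case $\psi \in C^2_{per}([0,2\pi])$: using the evenness of $\Phi$ in the space variable (visible from \cref{eq:periodic heat kernel def}) and a change of variables $z = x-y$ combined with periodicity,
\begin{equation*}
    \Psi(t,x) = \int_0^{2\pi} \Phi(t,z)\psi(x-z) \, \ud z,
\end{equation*}
so applying the last property of \cref{lem:properties of periodic kernel} to the $C^2_{per}$ function $z \mapsto \psi(x-z)$ gives $\Psi(t,x) \to \psi(x)$ pointwise in $x$ as $t \to 0^+$. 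The uniform bound $\Vert \Psi(t,\cdot)\Vert_{L^\infty} \le \Vert \psi \Vert_{L^\infty}$ (again by Young) then makes dominated convergence applicable on $[0,2\pi]$, yielding $\Vert \Psi(t,\cdot) - \psi \Vert_{L^1([0,2\pi])} \to 0$.

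Finally I would bootstrap from $C^2_{per}$ to $L^1_{per}$ by a standard $3\varepsilon$ argument using density of $C^2_{per}([0,2\pi])$ in $L^1_{per}([0,2\pi])$: given $\varepsilon > 0$, pick $\psi_\varepsilon \in C^2_{per}$ with $\Vert \psi - \psi_\varepsilon\Vert_{L^1} < \varepsilon$, write $\Psi_\varepsilon$ for the corresponding convolution, and estimate
\begin{equation*}
    \Vert \Psi(t,\cdot) - \psi \Vert_{L^1} \le \Vert \Psi(t,\cdot) - \Psi_\varepsilon(t,\cdot)\Vert_{L^1} + \Vert \Psi_\varepsilon(t,\cdot) - \psi_\varepsilon \Vert_{L^1} + \Vert \psi_\varepsilon - \psi \Vert_{L^1}.
\end{equation*}
The first and third terms are each bounded by $\varepsilon$ (uniformly in $t$) by the contraction property, while the second tends to $0$ by the smooth case. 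This gives the required $L^1_{loc}$ convergence and completes the proof.
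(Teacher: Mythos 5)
Your proof is correct and follows the same overall skeleton as the paper's: establish smoothness and the PDE by differentiation under the integral, prove convergence to the initial data for a dense class of smooth periodic functions, then close the $L^1$ gap by a $3\varepsilon$ density argument (both uses the contraction $\Vert \Psi(t,\cdot) \Vert_{L^1} \le \Vert \Phi(t,\cdot)\Vert_{L^1}\Vert\psi\Vert_{L^1} = \Vert\psi\Vert_{L^1}$ from Young/Fubini for the approximation error). The one place where you diverge is the treatment of the $C^2_{per}$ case. The paper computes the Fourier coefficients of $\Psi(t,\cdot)$ explicitly via the convolution theorem, $\tilde{a}_n(t) = e^{-n^2 t}a_n$ etc., and invokes Plancherel to get $\Vert \Psi(t,\cdot)-\psi\Vert_{L^2([0,2\pi])}^2 = \sum_n (1-e^{-n^2 t})^2(a_n^2+b_n^2) \to 0$, which on the bounded interval gives $L^1$ convergence. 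You instead flip the convolution via the change of variables $z=x-y$ (using periodicity of the integrand; evenness of $\Phi$ is not even needed), invoke the pointwise evaluation property $\lim_{t\to 0^+}\int_0^{2\pi}\Phi(t,z)\chi(z)\,\ud z = \chi(0)$ from \cref{lem:properties of periodic kernel} applied to $\chi = \psi(x-\cdot)$, and upgrade pointwise convergence to $L^1$ by the uniform bound $\Vert\Psi(t,\cdot)\Vert_{L^\infty}\le\Vert\psi\Vert_{L^\infty}$ plus dominated convergence. Both arguments ultimately rest on the $O(n^{-2})$ decay of the Fourier coefficients of a $C^2_{per}$ function; yours is arguably a touch more elementary since it avoids Plancherel and the passage through $L^2$, while the paper's is more self-contained relative to the Fourier machinery it has already set up in \cref{eq:exp psi fourier for reg}--\cref{eq:decay fourier coeffs an bn}.
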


The proof of this result is in \cref{sec:complementary-sec-regularity}. An alternative way of generating periodic solutions of the heat equation is to convolve the classical one-dimensional heat kernel with $2\pi$-periodic test functions. However, when performing integration by parts in the Duhamel formula (\textit{cf.}~\cref{eq:fr duhamel} in the lemma below) the boundary terms do not vanish unless both the kernel and the ``test function'' are periodic. Hence, the kernel of \cref{def:periodic heat kernel} is better suited to our analysis.

\begin{lemma}[Regularity away from initial time]\label{lem:regularity 1D}
Suppose that the periodic initial data for problem \cref{1d-phen} is such that $f_{R,0},f_{L,0} \in L^\infty(\Omega)$. Then, the unique weak solutions $f_R$ and $f_L$ of \cref{1d-phen} belong to $C((0,T]\times \bar{\Omega})$. 
\end{lemma}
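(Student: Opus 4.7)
The strategy is to recast each equation of \cref{1d-phen} as a forced one-dimensional heat equation and represent its weak solution via the periodic heat kernel $\Phi$ introduced in \cref{def:periodic heat kernel}. Writing the first equation as
\begin{equation*}
\partial_t \fR - \partial_{xx}\fR = -\Pe\,\partial_x\bigl(\fR(1-\rho)\bigr) + \fL - \fR,
\end{equation*}
the Duhamel principle (together with \cref{lem:generate periodic solutions}) suggests the representation
\begin{equation}\label{eq:duhamel-plan-1d}
\begin{split}
\fR(t,x) &= \int_0^{2\pi}\Phi(t,x-y)\,f_{R,0}(y)\,\ud y + \Pe\int_0^t\!\!\int_0^{2\pi}\!\partial_y\Phi(t-s,x-y)\,\fR(1-\rho)(s,y)\,\ud y\,\ud s \\
&\quad + \int_0^t\!\!\int_0^{2\pi}\!\Phi(t-s,x-y)\,(\fL-\fR)(s,y)\,\ud y\,\ud s,
\end{split}
\end{equation}
where I have moved the spatial derivative onto $\Phi$; the boundary terms from the integration by parts vanish because both $\Phi$ and $\fR(1-\rho)$ are $2\pi$-periodic in space. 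To justify \cref{eq:duhamel-plan-1d} for the weak solution of \cref{def:1d-phen-sol}, I would either test the weak formulation against a time-regularised version of $\Phi(t-\cdot,x-\cdot)$ and pass to the limit (using $\fR\in L^2([0,T];H^1)$ and $\fR'\in L^2([0,T];(H^1)')$ so that the duality pairing $\langle \fR',\Phi\rangle$ is meaningful), or else verify directly that the right-hand side of \cref{eq:duhamel-plan-1d} defines a weak solution and invoke uniqueness from \cref{thm:exist-uniq-phenom1d}.

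Once \cref{eq:duhamel-plan-1d} is established, continuity on $(0,T]\times\bar\Omega$ follows by analysing the three terms separately. The first term is smooth in $(0,\infty)\times\R$: since $\Phi(t,\cdot)$ and all its $x$-derivatives are uniformly bounded on $[\delta,T]\times\R$ for any $\delta>0$ (by \cref{lem:properties of periodic kernel} and the exponentially decaying Fourier coefficients in \cref{eq:periodic heat kernel def}), differentiation under the integral and dominated convergence yield $C^\infty$ regularity. The third term is the convolution of the bounded function $\fL-\fR$ (recall from \cref{thm:exist-uniq-phenom1d} that $\fR,\fL\in[0,1]$, hence in $L^\infty$) against $\Phi$; using $\|\Phi(t-s,\cdot)\|_{L^1([0,2\pi])}=1$ (\cref{lem:properties of periodic kernel}), this is bounded and an $\varepsilon$--$\delta$ argument gives joint continuity in $(t,x)$. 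The second term is the convolution of the bounded function $\fR(1-\rho)$ with $\partial_y\Phi$; the crucial ingredient is the estimate
\begin{equation*}
\int_0^t \|\partial_y\Phi(\tau,\cdot)\|_{L^1([0,2\pi])}\,\ud\tau \,\le\, C\sqrt{t},
\end{equation*}
which I would extract from the Jacobi theta representation \cref{eq:in terms of jacobi} (for small $\tau$, $\Phi$ matches the Gaussian heat kernel up to periodisation) or from a direct $L^2$-type bound analogous to \cref{eq:L2L2 for kernel} applied to $\partial_y\Phi$. This integrability at $\tau=0^+$ suffices to conclude that the drift term is continuous on $(0,T]\times\bar\Omega$ by dominated convergence applied to differences $\Phi(t-s,x-y)-\Phi(t'-s,x'-y)$.

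\textbf{Main obstacle.} The delicate step is the rigorous justification of the Duhamel identity \cref{eq:duhamel-plan-1d} at the weak-solution regularity given by \cref{def:1d-phen-sol}. The forcing $\partial_x(\fR(1-\rho))$ lives only in $L^2([0,T];H^{-1})$, so the convolution must be understood through the integration-by-parts in space \emph{before} taking the time integral; the time singularity at $s=t$ of $\partial_y\Phi$ then has to be handled by a careful limiting procedure. This is precisely why the periodic kernel $\Phi$ (rather than the standard heat kernel on $\R$) is necessary: it is the periodicity of $\Phi$ that makes the spatial integration by parts free of boundary contributions, allowing the drift term to be rewritten in a form that depends on $\fR(1-\rho)$ rather than on its distributional derivative. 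Once this rewriting is in place, the continuity statement reduces to standard convolution estimates.
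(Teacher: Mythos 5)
Your proof is correct, but it takes a genuinely different route from the paper's. You integrate by parts in $y$ to move the spatial derivative onto the kernel, so your drift term reads $\Pe\int_0^t\!\int_0^{2\pi}\partial_y\Phi(t-s,x-y)\,f_R(1-\rho)(s,y)\,\ud y\,\ud s$, and you close the estimate by pairing $\partial_y\Phi \in L^1_t L^1_y$ (with the $\tau^{-1/2}$ short-time blow-up, integrable) against the $L^\infty$ bound on $f_R(1-\rho)$. The paper instead keeps the derivative on the forcing, writes the term as $-\int_0^t\!\int_0^{2\pi}\Phi(t-s,x-y)\,\Pe\,\partial_y\{f_R(1-\rho)\}(s,y)\,\ud y\,\ud s$, and closes by pairing $\Phi \in L^2_t L^2_y$ (established in \cref{lem:properties of periodic kernel}, identity \cref{eq:L2L2 for kernel}) against $\partial_y f_R,\partial_y\rho \in L^2_t L^2_y$ (available from the well-posedness theory) together with the $L^\infty$ bound on $f_R$. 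The trade-offs are real: your route is the classical mild-solution representation and requires less regularity of the solution (only $L^\infty$, not $L^2_t H^1_x$), but it relies on an $L^1$ bound for $\partial_y\Phi$ that is \emph{not} among the properties the paper proves in \cref{lem:properties of periodic kernel} --- you would need to supply it from the theta-function representation \cref{eq:in terms of jacobi} or Poisson summation. The paper's route avoids any derivative of the kernel and uses exactly the estimates already on hand, at the cost of invoking the $H^1$-in-space regularity of $f_R$ and $\rho$. Both proofs treat the justification of the Duhamel identity at the weak-solution level somewhat lightly; the paper asserts uniqueness plus a direct verification, while you correctly flag this as the delicate step and sketch two reasonable strategies. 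Your concern that the $H^{-1}$-valued forcing and the short-time singularity of $\partial_y\Phi$ must be handled jointly in the integration by parts is well-placed and is in fact the reason the paper's formulation (with $\partial_y$ kept on the forcing and the non-singular $L^2_t L^2_y$ kernel bound) is technically more economical here.
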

\begin{proof}
Recall from \cref{thm:exist-uniq-phenom1d} that we have $f_R,f_L , \rho \in L^\infty([0,T]\times\Omega)$. 

The key observation is that the equations \cref{1d-phen} can be interpreted as heat equations with a source on the right-hand side. Since we already derived the fundamental solution of the periodic heat equation in  \cref{lem:properties of periodic kernel,lem:generate periodic solutions}, and we have already shown uniqueness in the relevant space of solutions to \cref{1d-phen}, we are in a position to apply the Duhamel principle and obtain implicit integral relations for $f_R$ and $f_L$. In the first step of the proof, we justify that these Duhamel formulas are well-defined. In a second step, we prove the continuity of these functions away from the initial time.

  \textit{Step 1 (Duhamel formula): } We justify that the formula for $f_R$ 
\begin{equation}\label{eq:fr duhamel}
    \begin{aligned}
    f_R(t,x) \!=\! & \int_0^{2\pi} \Phi(t,x-y) f_{R,0}(y) \, \ud y \\
    - & \!\int_0^t \!\int_0^{2\pi} \! \Phi(t\!-\!s,\!x\!-\!y) \Big( \Pe \partial_y \{  f_R(s,y)[1-\rho(s,y) ]\} + f_R(s,y) \!-\! f_L(s,y)  \Big) \ud y  \ud s ,
    \end{aligned}
\end{equation}
%\begin{equation}\label{eq:fl duhamel}
%    \begin{aligned}
%    f_L(t,x) \!=\! \int_0^{2\pi} \!\Phi(t,\!x\!-\!y) f_{L,0}(y) \, \ud y\! &+\! \Pe \!\int_0^t \!\int_0^{2\pi} \! \Phi(t\!-\!s,\!x\!-\!y) \partial_y \big( f_L(s,y)(1\!-\!\rho(s,y)\! )\!\big) \, \ud y \, \ud s \\ 
%    &+\! k \!\int_0^t \! \int_0^{2\pi}\! \Phi(t\!-s,x\!-\!y) \big(f_R(s,y) \!-\! f_L(s,y) \big) \, \ud y \, \ud s 
%    \end{aligned}
%\end{equation}
is well-defined, for a.e.~$(t,x) \in [0,T]\times \bar{\Omega}$. To begin with, whenever $t>0$, the function $\Phi(t,\cdot)$ is smooth and is therefore square-integrable on the compact interval $[0,2\pi]$. 
This shows that the integrand in the first integral in \cref{eq:fr duhamel} is integrable. Meanwhile, the equality \cref{eq:L2L2 for kernel} shows that $\Phi \in L^2([0,T];L^2([0,2\pi]))$, whence, using the boundedness of $\rho$, 
\begin{equation*}
    \begin{aligned}
    &\int_0^t\int_0^{2\pi} \Phi(t-s,x-y) \big |\partial_y \{ f_R(s,y)[1-\rho(s,y)] \} \big | \, \ud y \, \ud s \\ 
    &\leq \!2\!\int_0^t\! \bigg[\! \int_0^{2\pi}\! \Phi(t-s,x-y)^2 \, \ud y \!\bigg]^{1/2}\! \bigg[\! \int_0^{2\pi}\! |\partial_y f_R(s,y)|^2\! +\! f_R(s,y)^2 |\partial_y \rho(s,y)|^2 \, \ud y \bigg]^{1/2} \, \ud s \\ 
    &\leq \!\int_0^t\! \int_0^{2\pi}\! \Phi(t\!-\!s,\!x\!-\!y)^2 \, \ud y \, \ud s \!+\! \Vert f_R \Vert^2_{L^2(0,T;H^1(\Omega))} \!+\! \int_0^t \!\int_0^{2\pi}\! f_R(s,y)^2 |\partial_y \rho(s,y)|^2 \, \ud y \, \ud s, 
    \end{aligned}
\end{equation*}
where we have used the Young inequality. As before, we use a change of variable and the constraint $x \in [0,2\pi]$ to bound the first term on the right-hand side of the above by $2 \Vert \Phi \Vert^2_{L^2(0,T;L^2([0,2\pi]))}$. Meanwhile, 
\begin{equation*}
    \int_0^t \int_0^{2\pi} f_R(s,y)^2 |\partial_y \rho(s,y)|^2 \, \ud y \, \ud s \leq \Vert f_R \Vert^2_{L^\infty((0,T)\times\Omega)} \Vert \rho \Vert^2_{L^2(0,T;H^1(\Omega))}, 
\end{equation*}
using that $f_R \in L^\infty((0,T)\times\Omega)$ from \cref{thm:exist-uniq-phenom1d}. Similarly, using \cref{lem:properties of periodic kernel}
\begin{equation*}
    \int_0^t\! \int_0^{2\pi}\! \Phi(t\!-\!s,\!x\!-\!y) |f_R(s,y)\! -\! f_L(s,y)| \, \ud y \, \ud s \leq 2\Vert \Phi \Vert_{L^2(0,T;L^2(\Omega))} \Vert f_R\! -\! f_L \Vert_{L^2(0,T;L^2(\Omega))}. 
\end{equation*}
Therefore, the integrand in the second integral of \cref{eq:fr duhamel} is also integrable. It is an exercise to verify that \cref{eq:fr duhamel}, and the equivalent formula for $f_L$ satisfy \cref{1d-phen} and are periodic on $\Omega$. These formulas therefore coincide with the unique solutions of \cref{1d-phen}.

  \textit{Step 2 (continuity away from initial time): } Fix $(t,x) \in (0,T]\times\bar{\Omega}$ and let $(t_n,x_n) \in (0,T]\times\Omega$ be converging to $(t,x)$. Using the integrability from Step 1, the continuity of $\Phi$ for $t>0$, and the Dominated Convergence Theorem on \cref{eq:fr duhamel} and the analogous formula for $f_L$, we directly obtain $|f_R(t_n,x_n) - f_R(t,x)| \to 0$ and $|f_L(t_n,x_n) - f_L(t,x)| \to 0$ as $n\to\infty$. It follows that $f_R,f_L \in C((0,T]\times \bar{\Omega})$. In turn, we also obtain $\rho \in C((0,T]\times \bar{\Omega})$. 
\end{proof}

\begin{lemma}\label{lem:reg at initial and final time}
Suppose that $f_{R,0},f_{L,0} \in C(\bar{\Omega})$ and are periodic. Then, the unique weak solutions $f_R$ and $f_L$ of \cref{1d-phen} belong to $C([0,T]\times\bar{\Omega})$. 
\end{lemma}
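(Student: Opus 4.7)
Since \cref{lem:regularity 1D} already yields $f_R, f_L \in C((0,T]\times\bar{\Omega})$, the only remaining task is to establish uniform continuity at the initial time, i.e., $\|f_R(t,\cdot) - f_{R,0}\|_{L^\infty(\Omega)} \to 0$ as $t \to 0^+$ (and analogously for $f_L$); combined with the uniform continuity of $f_{R,0}, f_{L,0}$ on the torus, this gives joint continuity at every point $(0,x)$ via a standard triangle inequality. The starting point is the Duhamel identity \cref{eq:fr duhamel}, which decomposes
\begin{equation*}
f_R(t,x) - f_{R,0}(x) = \big[\Phi(t,\cdot) \ast f_{R,0}\big](x) - f_{R,0}(x) - I(t,x),
\end{equation*}
where $I(t,x)$ is the convolution against $\Phi(t-s,x-y)$ of $\Pe\,\partial_y\{f_R(1-\rho)\}(s,y) + f_R(s,y) - f_L(s,y)$. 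It therefore suffices to estimate both contributions uniformly in $x$.

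For the heat-kernel/initial-data term, I would exploit that $\Phi(t,\cdot) \geq 0$ with unit mass on $[0,2\pi]$ (\cref{lem:properties of periodic kernel}), so that convolution is an $L^\infty$-contraction on periodic functions. For $\psi \in C^\infty_{per}(\bar{\Omega})$, the Fourier representation \cref{eq:periodic heat kernel def} together with the decay $|\hat{\psi}(n)| \lesssim n^{-2}\|\psi''\|_{L^1}$ gives the explicit rate
\begin{equation*}
\|\Phi(t,\cdot) \ast \psi - \psi\|_{L^\infty(\Omega)} \leq \sum_{n \neq 0} |\hat{\psi}(n)|\, (1 - e^{-n^2 t}) \lesssim \|\psi''\|_{L^1(\Omega)}\, \sqrt{t},
\end{equation*}
obtained by splitting the sum at $n^2 \sim 1/t$. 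A standard $\varepsilon/3$ density argument---approximating $f_{R,0} \in C_{per}(\bar{\Omega})$ uniformly by smooth periodic functions and using the $L^\infty$-contractivity---then delivers $\|\Phi(t,\cdot) \ast f_{R,0} - f_{R,0}\|_{L^\infty(\Omega)} \to 0$ as $t \to 0^+$.

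For the source integral $I(t,x)$, I would apply Cauchy--Schwarz in $(s,y)$ and exploit periodicity: for fixed $x \in \bar{\Omega}$, the change of variables $z = y-x$ combined with $2\pi$-periodicity of $\Phi$ yields $\|\Phi(t-s,x-\cdot)\|_{L^2(\Omega)} = \|\Phi(t-s,\cdot)\|_{L^2(\Omega)}$, so that
\begin{equation*}
|I(t,x)| \lesssim \bigg(\!\int_0^t\! \|\Phi(\tau,\cdot)\|_{L^2(\Omega)}^2\, \ud\tau\bigg)^{\!1/2}\!\Big( \|\partial_y\{f_R(1\!-\!\rho)\}\|_{L^2(0,T;L^2(\Omega))} \!+\! \|f_R\!-\!f_L\|_{L^2(0,T;L^2(\Omega))}\Big),
\end{equation*}
uniformly in $x$. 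The first factor tends to $0$ as $t \to 0^+$ by \cref{eq:L2L2 for kernel} (indeed, the series $\sum_{n\geq 1} n^{-2}(1-e^{-2n^2 t}) = O(\sqrt{t})$ after splitting at $n^2 \sim 1/t$), and the second factor is finite using \cref{thm:exist-uniq-phenom1d} together with the $L^\infty$-bounds $\|f_R\|_\infty, \|\rho\|_\infty \leq 1$. Summing the two contributions then yields the desired uniform convergence. The main obstacle I expect is the first step: upgrading the pointwise statement of \cref{lem:properties of periodic kernel} to \emph{uniform} convergence of the periodic heat semigroup on merely continuous initial data, for which the Fourier rate on smooth data combined with the density argument is the essential new ingredient beyond what was used in \cref{lem:regularity 1D}.
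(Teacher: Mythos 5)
Your proposal is correct and follows essentially the same strategy as the paper's proof: both start from the Duhamel formula \cref{eq:fr duhamel}, handle the source terms by Cauchy--Schwarz together with the smallness $\int_0^t\|\Phi(\tau,\cdot)\|_{L^2}^2\,\ud\tau\to 0$ from \cref{eq:L2L2 for kernel}, and treat the heat-kernel/initial-data term by approximating $f_{R,0}$ uniformly by smooth periodic functions, exploiting the nonnegativity and unit-mass property of $\Phi$ (hence $L^\infty$-contractivity) and the $n^{-2}$ Fourier decay for the smooth approximants. The only cosmetic difference is that you extract an explicit $O(\sqrt{t})$ rate, whereas the paper just invokes dominated convergence on the atomic measure; this is harmless and does not change the substance of the argument.
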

\begin{proof}
We only write the proof for $f_R$, as the proof for $f_L$ is identical. In view of the uniform continuity of $f_R$ on the compact $\bar{\Omega}$, given $\varepsilon>0$ there exists $\delta=\delta(\varepsilon)$ for which if $x,y \in [0,2\pi]$ are such that $|x-y|<\delta$, then  $|f_{R,0}(x)-f_{R,0}(y)| < \varepsilon/4$. Now, given $t \in (0,\infty)$ and $x^* \in (0,2\pi)$, there holds from \cref{eq:fr duhamel}
\begin{equation*}
    \begin{aligned}
    |f_R(t,x) - f_{R,0}(x^*)| \leq& \bigg| \int_0^{2\pi} \Phi(t,x-y)  f_{R,0}(y) \, \ud y - f_{R,0}(x^*) \bigg| \\ 
    &+ \Pe \int_0^t \int_0^{2\pi}  \Phi(t-s,x-y) |\partial_y \big( f_R(s,y) (1-\rho(s,y) ) \big) | \, \ud y \, \ud s \\ 
    &+ k \int_0^t \int_0^{2\pi} \Phi(t-s,x-y) \big| f_R(s,y) - f_L(s,y) \big| \, \ud y \, \ud s \\ 
    =:& I_1(t,x) + I_2(t,x) + I_3(t,x), 
    \end{aligned}
\end{equation*}
where we used the nonnegativity of $\Phi$ and $\int_0^{2\pi}\Phi(t,x) \, \ud x = 1$ for every $t>0$ from \cref{lem:properties of periodic kernel}. Using H\"{o}lder's inequality, the $L^2$-boundedness of $\partial_x f_R,f_R,f_L$, and \cref{lem:properties of periodic kernel}, we obtain $\lim_{t\to0^+}\Vert I_2(t,\cdot)\Vert_{L^\infty([0,2\pi])} = \lim_{t\to0^+}\Vert I_3(t,\cdot)\Vert_{L^\infty([0,2\pi])} = 0$.

It remains to control $I_1$. Recall that since $f_{R,0} \in C(\bar{\Omega})$ is periodic, there exists a sequence $(\psi_m)_{m\in\mathbb{N}}$ of periodic elements of $C^2(\bar{\Omega})$ such that $\Vert f_{R,0} - \psi_m \Vert_{L^\infty(\bar{\Omega})} \to 0$ as $n \to \infty$. Then, since $\psi_m \in C^2(\bar{\Omega})$ for each $m\in\mathbb{N}$, we know that it has Fourier coefficients $a_{m,n},b_{m,n}$ that decay as $n^{-2}$; \textit{cf.}~\cref{eq:exp psi fourier for reg}-\cref{eq:decay fourier coeffs an bn}. Then, using the convolution result for Fourier series, 
\begin{equation*}
    \begin{aligned}
        \bigg| \psi_m(x) - \int_0^{2\pi} \Phi(t,x-y)\psi_m(y) \, \ud y \bigg| &= \frac{1}{\pi}\bigg| \sum_{n=1}^\infty (1-e^{-n^2 t})\big(a_{m,n} \cos nx + b_{m,n} \sin nx \big) \bigg| \\ 
        &\leq C_m \sum_{n=1}^\infty \frac{1}{n^2}(1-e^{-n^2 t}) \to 0 \qquad \text{as } t \to 0^+, 
    \end{aligned}
\end{equation*}
where we applied the Dominated Convergence Theorem applied to the atomic measure in the final line. Note that 
\begin{equation*}
    \begin{aligned}
        I_1(t&,x) \leq |f_{R,0}(x^*) - f_{R,0}(x)| + |f_{R,0}(x) - \psi_m(x)| \\ 
        &+ \bigg| \psi_m(x) - \int_0^{2\pi} \Phi(t,x-y) \psi_m(y) \, \ud y \bigg| + \bigg| \int_0^{2\pi} \Phi(t,x-y) \big(\psi_m(y) - f_{R,0}(y)\big) \, \ud y \bigg|, 
    \end{aligned}
\end{equation*}
and elementary manipulations (\textit{cf.}~Step 3 of Proof of \cref{lem:properties of periodic kernel}) relying also on the periodicity of $\psi_m$ yield $\int_0^{2\pi}\Phi(t,x-y)\psi_m(y) \, \ud y = \int_0^{2\pi}\Phi(t,x-y)\psi_m(x+y) \, \ud y$. Thus, given $\varepsilon>0$, by first picking $x$ such that $|x-x^*|<\delta$ and then choosing $m$ sufficiently large such that $\Vert f_{R,0} - \psi_m \Vert_{L^\infty(\bar{\Omega})} < \varepsilon/4$, and finally taking $t$ sufficiently close to zero for this particular $m$ (\textit{cf.}~convergence to initial data of \cref{lem:properties of periodic kernel}), it follows that 
\begin{equation*}
    I_1(t,x) < \frac{3}{4}\varepsilon
        + \Vert \psi_m - f_{R,0}\Vert_{L^\infty(\bar{\Omega})} \int_0^{2\pi} \Phi(t,x-y)  \, \ud y < \varepsilon, 
\end{equation*}
where we used the unit integral property of $\Phi$. Hence, $\lim_{(t,x) \to (0^+,x^*)}I_1(t,x) = 0$, and we conclude $\lim_{(t,x) \to (0^+,x^*)} |f_{R,0}(x^*) - f_R(t,x)| = 0$, which verifies the required continuity at the initial time. The same argument shows the result for $f_L$. 
\end{proof}

\begin{remark}\label{rem:higher reg maybe possible}
We showed that the weak solutions of the 1D model, which (\textit{cf.}~\cref{rem:continuity in L2 only 1d}) in principle only belong to $C([0,T];L^2_{per}([0,2\pi]))\cap L^2([0,T];H^1_{per}([0,2\pi])$, are in fact continuous under mild assumptions on the initial data. Although the authors suspect that it is possible to show higher regularity of the solutions, the method of proof required for such a result would be rather different to the current approach. For this reason, we leave this for further investigation. 
\end{remark}

\subsection{Regularity for the 2D model}

In accordance with the expression for the one-dimensional periodic kernel in terms of complex exponentials (\textit{cf.}~\cref{sec-regularity-1d-goldstein-taylor}), we find that the periodic heat kernel in three spatial dimensions (\textit{i.e.}~when $\Upsilon = [0,2\pi]^3$) is given by the explicit formula 
\begin{equation}\label{3d kernel}
    \tilde{\Phi}(t,\x) = \frac{1}{(2\pi)^3} + \sum_{\mathbf{n}\in\mathbb{Z}^3\setminus \{0\}} \frac{e^{-|\mathbf{n}|^2 t}}{(2\pi)^3}e^{i \mathbf{n} \cdot \x} \qquad \forall (t,\x) \in (0,\infty)\times \mathbb{R}^3. 
\end{equation}
Note that the above may be rewritten in terms of the Jacobi theta-3 function as $\tilde{\Phi}(t,\x) = 
         (2\pi)^{-3} \theta_3( {x_1}/{2} , e^{-t} ) \theta_3( {x_2}/{2} , e^{-t} ) \theta_3( {x_3}/{2} , e^{-t} )$ for every $(t,\x) \in (0,\infty) \times\mathbb{R}^3$. As per the proof of \cref{lem:properties of periodic kernel}, we obtain that $\tilde{\Phi}$ is nonnegative in $(0,\infty)\times\mathbb{R}^3$ with 
\begin{equation}
    \Vert \tilde{\Phi}(t,\cdot) \Vert_{L^1(\Upsilon)} = 1 \quad \text{and} \quad \int_0^t \Vert \tilde{\Phi}(\tau,\cdot) \Vert^2_{L^2(\Upsilon)} \, \ud \tau = t + \sum_{\mathbf{n}\in\mathbb{Z}^3}\frac{1}{2|\mathbf{n}|^2}(1-e^{-|\mathbf{n}|^2 t}) \qquad \forall t > 0, 
\end{equation}
and that $\tilde{\Phi}$ is a smooth $\Upsilon$-periodic solution of the heat equation in $(0,\infty)\times\mathbb{R}^3$. 
In turn, an identical proof to that of \cref{lem:generate periodic solutions} yields the following result. 

\begin{lemma}\label{lem:generate periodic solutions 2D}
Let $\psi \in L^1_{per}(\Upsilon)$. Then, $\tilde{\Psi}(t,\boldsymbol{\xi}) := \int_\Upsilon \tilde{\Phi}(t,\boldsymbol{\xi}-\mathbf{z})\psi(\mathbf{z}) \, \ud \mathbf{z}$ for all $(t,\boldsymbol{\xi}) \in (0,\infty)\times\mathbb{R}^3$ is a smooth $\Upsilon$-periodic function defined on the full-space $(0,\infty)\times\mathbb{R}^3$, and solves the Cauchy problem 
\begin{equation}
    \left\lbrace\begin{aligned}
        & \partial_t \tilde{\Psi} - \Delta_{\boldsymbol{\xi}} \tilde{\Psi} = 0 \qquad &&\text{in } (0,\infty)\times\mathbb{R}^3, \\ 
        & \Psi|_{t=0} = \psi \qquad && \text{on } \mathbb{R}^3, 
    \end{aligned}\right.
\end{equation}
where the initial data is achieved in $L^1_{loc}$, \textit{i.e.}, $\lim_{t\to 0^+}\Vert \tilde{\Psi}(t,\cdot) - \psi \Vert_{L^1(\Upsilon)} = 0$. 
\end{lemma}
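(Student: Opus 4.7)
The plan is to mirror the proof of \cref{lem:generate periodic solutions}, adapting each step to the three-dimensional torus $\Upsilon = [0,2\pi]^3$. I would organize the argument into three ingredients: smoothness and periodicity of $\tilde{\Psi}$, verification of the PDE, and convergence to the initial datum. The first two should be routine; the third is where care is needed.

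For the smoothness and periodicity step, fix any $t_0 > 0$. By the formula \cref{3d kernel}, any derivative $\partial_t^k D_{\boldsymbol{\xi}}^{\alpha} \tilde{\Phi}(t,\boldsymbol{\xi})$ is a Fourier series whose coefficients are dominated by $|\mathbf{n}|^{2k+|\alpha|}e^{-|\mathbf{n}|^2 t_0}/(2\pi)^3$, which is summable over $\mathbf{n}\in\mathbb{Z}^3$; hence all such series converge absolutely and uniformly on $[t_0,\infty)\times\mathbb{R}^3$, giving $\tilde{\Phi}\in C^\infty((0,\infty)\times\mathbb{R}^3)$ with pointwise bounds uniform on compacts. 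This justifies differentiating under the integral sign in the definition of $\tilde{\Psi}$, so $\tilde{\Psi}\in C^\infty((0,\infty)\times\mathbb{R}^3)$. The $\Upsilon$-periodicity of $\tilde{\Phi}(t,\cdot)$ transfers directly to $\tilde{\Psi}(t,\cdot)$, since for $j\in\{1,2,3\}$ a change of variable gives $\tilde{\Psi}(t,\boldsymbol{\xi}+2\pi\mathbf{e}_j) = \int_\Upsilon \tilde{\Phi}(t,\boldsymbol{\xi}+2\pi\mathbf{e}_j-\z)\psi(\z)\,\ud\z = \tilde{\Psi}(t,\boldsymbol{\xi})$. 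Finally, because $(\partial_t - \Delta_{\boldsymbol{\xi}})\tilde{\Phi} = 0$ pointwise on $(0,\infty)\times\mathbb{R}^3$, the same differentiation under the integral yields $(\partial_t - \Delta_{\boldsymbol{\xi}})\tilde{\Psi}\equiv 0$ on $(0,\infty)\times\mathbb{R}^3$.

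The main obstacle is the $L^1_{loc}$ convergence to the initial datum; by $\Upsilon$-periodicity of $\tilde{\Psi}(t,\cdot) - \psi$, it suffices to show $\|\tilde{\Psi}(t,\cdot) - \psi\|_{L^1(\Upsilon)} \to 0$ as $t \to 0^+$. I would argue by density: given $\varepsilon>0$, pick $\psi_\varepsilon \in C^\infty_{per}(\bar{\Upsilon})$ with $\|\psi - \psi_\varepsilon\|_{L^1(\Upsilon)} < \varepsilon/3$. Young's convolution inequality on the torus combined with the normalisation $\|\tilde{\Phi}(t,\cdot)\|_{L^1(\Upsilon)} = 1$ gives $\|\tilde{\Phi}(t,\cdot) \ast (\psi - \psi_\varepsilon)\|_{L^1(\Upsilon)} \leq \varepsilon/3$ uniformly in $t>0$, which takes care of the difference $\psi - \psi_\varepsilon$. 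For the smooth approximant, expanding in Fourier series yields
\[
\tilde{\Phi}(t,\cdot)\ast\psi_\varepsilon(\boldsymbol{\xi}) - \psi_\varepsilon(\boldsymbol{\xi}) = \sum_{\mathbf{n}\in\mathbb{Z}^3\setminus\{0\}}(e^{-|\mathbf{n}|^2 t} - 1)\hat{\psi}_\varepsilon(\mathbf{n})e^{i\mathbf{n}\cdot\boldsymbol{\xi}},
\]
and since $\psi_\varepsilon$ is smooth, $|\hat{\psi}_\varepsilon(\mathbf{n})|$ decays faster than any polynomial in $|\mathbf{n}|$; the Dominated Convergence Theorem applied to the counting measure on $\mathbb{Z}^3$ then forces uniform (hence $L^1(\Upsilon)$) convergence of this difference to zero as $t\to 0^+$. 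Taking $t$ small enough and combining via the triangle inequality closes the argument. The delicate ingredients are the transfer of $\|\tilde{\Phi}(t,\cdot)\|_{L^1(\Upsilon)} = 1$ and of the Fourier-decay based vanishing of the tail from the one-dimensional setting of \cref{lem:properties of periodic kernel}; once these are available, the density/Young/triangle step is mechanical.
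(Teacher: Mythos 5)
Your proposal is correct and follows the same structure as the paper's argument, which simply adapts the 1D proof of \cref{lem:generate periodic solutions} to $\Upsilon$. The only technical deviation is in the smooth case: you obtain uniform (hence $L^1(\Upsilon)$) convergence via absolute summability of the Fourier series of a $C^\infty_{per}$ approximant, whereas the paper establishes $L^2(\Upsilon)$ convergence via Plancherel for a $C^2_{per}$ approximant; both imply $L^1$ convergence on the bounded domain, so the difference is immaterial.
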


In turn, we have the following regularity result. 

\begin{lemma}\label{lem:regularity 2D}
Suppose the initial data for problem \cref{model_intro} is such that $f_0 \in L^\infty_{per}(\Upsilon)$. Then unique weak solution $f$ of \cref{model_intro} supplied by \cref{thm:exist-uniq-phenom} belongs to $C((0,T]\times\bar{\Upsilon})$. 
\end{lemma}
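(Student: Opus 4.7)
The plan is to mirror the Duhamel argument of \cref{lem:regularity 1D} in the three-dimensional periodic setting, replacing the kernel~\cref{3d kernel} by its anisotropic counterpart adapted to the operator $D_e\Delta_\x+\partial_\theta^2$. Concretely, I would set
\begin{equation*}
    \tilde{\Phi}^{D_e}(t,\boldsymbol{\xi}) := \frac{1}{(2\pi)^3}+\sum_{(\mathbf{n}',k)\in\Z^3\setminus\{0\}}\frac{e^{-(D_e|\mathbf{n}'|^2+k^2)t}}{(2\pi)^3}\,e^{i(\mathbf{n}'\cdot\x+k\theta)},
\end{equation*}
which, by the same arguments as in the proofs of \cref{lem:properties of periodic kernel} and \cref{lem:generate periodic solutions 2D}, is a smooth, nonnegative, mass-one, $\Upsilon$-periodic fundamental solution of $\partial_t-D_e\Delta_\x-\partial_\theta^2=0$. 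Treating the drift in \cref{model_intro} as a source and invoking the uniqueness granted by \cref{thm:exist-uniq-phenom}, I would then derive the representation
\begin{equation*}
    f(t,\boldsymbol{\xi})=\int_\Upsilon\tilde{\Phi}^{D_e}(t,\boldsymbol{\xi}-\mathbf{z})f_0(\mathbf{z})\,\ud\mathbf{z}+\Pe\int_0^t\!\!\int_\Upsilon \nabla_\x\tilde{\Phi}^{D_e}(t-s,\boldsymbol{\xi}-\mathbf{z})\cdot[f(1-\rho)\e(\theta)](s,\mathbf{z})\,\ud\mathbf{z}\,\ud s,
\end{equation*}
where the spatial gradient has been transferred onto the kernel by integration by parts, with boundary terms vanishing thanks to periodicity.

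The quantitative inputs needed are $\|\tilde{\Phi}^{D_e}(t,\cdot)\|_{L^1(\Upsilon)}=1$ together with the kernel-gradient estimate $\|\nabla_\x\tilde{\Phi}^{D_e}(t,\cdot)\|_{L^1(\Upsilon)}\le C\,t^{-1/2}$ for $t\in(0,T]$, both derivable either by periodising the Gaussian heat kernel on $\R^3$ and using the classical short-time estimate, or directly by Fourier/Jacobi-theta manipulations as in the proof of \cref{lem:properties of periodic kernel}. Combined with $|1-\rho|\le 1$ (Step~5 of the proof of \cref{thm:exist-uniq-phenom}) and an $L^\infty$ bound $\|f(t)\|_{L^\infty(\Upsilon)}\le M_T$ propagated from the hypothesis $f_0\in L^\infty(\Upsilon)$, these bounds imply absolute convergence of both Duhamel integrals at every $(t,\boldsymbol{\xi})\in(0,T]\times\bar{\Upsilon}$, using $\int_0^t(t-s)^{-1/2}\,\ud s=2\sqrt{t}<\infty$.

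Continuity on $(0,T]\times\bar\Upsilon$ would then be obtained by exactly the same dominated-convergence argument as in \cref{lem:regularity 1D}: for any $(t_n,\boldsymbol{\xi}_n)\to(t_\ast,\boldsymbol{\xi}_\ast)$ with $t_\ast>0$, the first Duhamel term converges thanks to the smoothness of $\tilde{\Phi}^{D_e}$ at positive time, while for the second one splits $\int_0^{t_n}=\int_0^{t_\ast-\delta}+\int_{t_\ast-\delta}^{t_n}$ for $\delta>0$ small: the first piece passes to the limit by dominated convergence since the integrand is uniformly bounded for $t_n-s\ge\delta$, and the tail is controlled uniformly by $C\,M_T\!\int_{t_\ast-\delta}^{t_n}(t_n-s)^{-1/2}\ud s=O(\sqrt\delta)$. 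Sending $\delta\to 0$ afterwards closes the argument.

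The principal obstacle is the $L^\infty$ propagation of $f$. \cref{thm:exist-uniq-phenom} only furnishes $f\in L^\infty([0,T];L^2_{per}(\Upsilon))\cap L^2([0,T];H^1_{per}(\Upsilon))$ with $\rho\in[0,1]$, which does not a priori preclude pointwise blow-up of $f$ in the angular variable. Two routes appear feasible: (i) a De~Giorgi/Stampacchia truncation against $(f-M)_+$ with $M=\|f_0\|_{L^\infty}$, exploiting that the transport coefficient $(1-\rho)\e(\theta)$ is uniformly bounded while the zero-order term obtained by expanding the divergence, $\Pe\,\e(\theta)\cdot\nabla_\x\rho$, belongs to $L^2((0,T)\times\Upsilon)$; or (ii) a bootstrap of the Duhamel formula in $L^p$-scales via the smoothing estimate $\|\nabla_\x\tilde{\Phi}^{D_e}(t,\cdot)\|_{L^r(\Upsilon)}\lesssim t^{-\frac12-\frac32(1-\frac1r)}$, starting from the available $L^2$-regularity and iterating until $L^\infty$ is reached. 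Either route is technically more demanding than in one dimension and constitutes the heart of the proof; once the $L^\infty$ bound on $f$ is secured, the remainder is a direct transcription of \cref{lem:regularity 1D}.
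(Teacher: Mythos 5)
Your plan matches the paper's proof in both structure and substance: the paper first establishes $f\in L^\infty((0,T)\times\Upsilon)$ by precisely your route~(i) — testing the weak formulation with $(f-k)_+$ at the level $k=\Vert f_0\Vert_{L^\infty(\Upsilon)}$ and applying Gr\"onwall — and then concludes via the periodic Duhamel formula and dominated convergence, just as you outline. The only cosmetic differences are that the paper normalizes by $D_e$ rather than building an anisotropic kernel, and keeps the divergence on the source (using $f,\rho\in L^2([0,T];H^1_{per})$ and $\Phi\in L^2$ in time-space) rather than transferring $\nabla_\x$ onto the kernel with the $t^{-1/2}$ estimate — both choices are equally viable.
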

\begin{proof}
There are two steps to the proof: the first is to show the boundedness of $f$ (which requires building an appropriate test function); the second is to argue via a Duhamel formula (\textit{cf.}~Proof of \cref{lem:regularity 1D}).

  \textit{Step 1 (boundedness of $f$): } Recall from \cref{thm:exist-uniq-phenom} that the weak solution of \cref{model_intro}, $f$, a priori only belongs to the space $L^\infty([0,T];L^2_{per}(\Upsilon))\cap L^2([0,T];H^1_{per}(\Upsilon))$. We show that $f \in L^\infty((0,T)\times\Upsilon)$.

For any $k>0$, one verifies that $(f-k)_+ \in L^\infty([0,T];L^2_{per}(\Omega))\cap L^2([0,T];H^1_{per}(\Upsilon))$ and $\partial_t (f-k)_+ \in L^2([0,T];(H^1_{per})'(\Upsilon))$. Indeed, there holds $(f-k)_+ = \mathds{1}_{\{f>k\}} (f-k)$, from which it follows that, in the sense of distributions, $\nabla_{t,\x,\theta}(f-k)_+ = \mathds{1}_{\{f>k\}} \nabla_{t,\x,\theta} f$ a.e.~in $(0,T)\times\Upsilon$, where $\nabla_{t,\x,\theta}$ is the vector of derivatives $(\partial_t , \nabla , \partial_\theta)$, and we directly deduce the boundedness of the relevant norms. Thus, $(f(t)-k)_+$ is an admissible test function for the weak formulation of \cref{def:concept of solution}. Note additionally that, by \cite[Ch.~2 Lem.~1.2]{bookTemam77}, there holds in the sense of distributions, for a.e.~$t\in(0,T)$, $\frac{d}{dt} \frac{1}{2}\int_\Upsilon (f(t)-k)_+^2 \, \ud \x = \langle\partial_t f(t), (f(t)-k)_+ \rangle_{\Upsilon}$. Using $(f(t)-k)_+$ as the test function in the weak formulation of \cref{def:concept of solution}, integrating in $\x$, using the boundedness of $\rho$, H\"{o}lder's inequality, and the Young inequality, we get \begin{equation*}
    \begin{aligned}
        \frac{d}{dt}\left(\frac{1}{2}\Vert (f(t)-k)_+ \Vert_{L^2(\Upsilon)}^2 \right) + \int_\Upsilon \bigg( \frac{D_e}{2} |\nabla (f(t)-k)_+|^2 + & |\partial_\theta (f(t)-k)_+|^2 \bigg) \, \ud \x \, \ud \theta \\ 
        &\leq \frac{ \Pe^2}{2D_e} \Vert (f(t)-k)_+ \Vert_{L^2(\Upsilon)}^2, 
    \end{aligned}
\end{equation*}
for a.e.~$t\in(0,T)$. It follows from the Gr\"{o}nwall Lemma that there holds 
\begin{equation*}
    \begin{aligned}
        \frac{1}{2}\Vert (f(t)-k)_+ \Vert_{L^2(\Upsilon)}^2 \leq \frac{1}{2}\Vert (f_0-k)_+ \Vert_{L^2(\Upsilon)}^2 \exp\left({\frac{ \Pe^2}{D_e}t}\right) \qquad \text{a.e.~}t\in(0,T). 
    \end{aligned}
\end{equation*}
By selecting $k = \Vert f_0 \Vert_{L^\infty(\Upsilon)}$, the right-hand side of the above is null. It follows that $f(t) \leq \Vert f_0 \Vert_{L^\infty(\Upsilon)}$ for a.e.~$t\in(0,T)$, \textit{i.e.}, $\Vert f \Vert_{L^\infty((0,T)\times\Upsilon)} \leq \Vert f_0 \Vert_{L^\infty(\Upsilon)}$.

  \textit{Step 2 (Duhamel formula for $f$): } Without loss of generality for this final step we only consider the case where we have normalised the \cref{model_intro} by the positive constant $D_e$, so that $\partial_t - D_e\Delta - \partial^2_\theta$ may be rewritten as $\partial_t - \Delta_{\boldsymbol{\xi}}$ with $\boldsymbol{\xi}$ the usual 3-dimensional vector. 

As per the proof of \cref{lem:regularity 1D}, in view of the uniqueness provided by \cref{thm:exist-uniq-phenom}, it follows from \cref{model_intro} and the Duhamel principle that $f$ satisfies the following implicit equation, where the kernel $\tilde{\Phi}$ was defined in \cref{3d kernel} (\textit{cf.}~\cref{lem:generate periodic solutions 2D}), 
\begin{equation}\label{eq:f duhamel}
    f(t,\boldsymbol{\xi})\!=\!\int_\Upsilon \tilde{\Phi}(t,\boldsymbol{\xi}-\mathbf{z}) f_0(\mathbf{z}) \, \ud \mathbf{z}+ \Pe \int_0^t \int_\Upsilon  \tilde{\Phi}(t-s,\boldsymbol{\xi}-\mathbf{z}) \nabla_{\mathbf{z}}\cdot  \big( f(s,\mathbf{z}) (1-\rho(s,\mathbf{z})  )\mathbf{e}(\theta) \big) \ud \mathbf{z} \ud s
\end{equation}
for a.e.~$(t,\boldsymbol{\xi}) \in [0,T]\times \bar{\Upsilon}$. Note that the above is well-defined: using the boundedness of $f$ from Step 1, we proceed to show the integrability of the integrand in \cref{eq:f duhamel} as per Step 1 of the proof of \cref{lem:regularity 1D}. We now deduce that $f \in C((0,T]\times\bar{\Omega})$ using the continuity of $\tilde{\Phi}$ for $t>0$ and the Dominated Convergence Theorem, exactly as we did in Step 2 of the proof of \cref{lem:regularity 1D}. 
\end{proof}

Finally, by following the strategy of proof of \cref{lem:reg at initial and final time} to the letter, we obtain the following result concerning continuity up to the initial time. 
\begin{lemma}\label{lem:regularity-2d-final}
Suppose that the periodic initial data $f_0$ for problem \cref{model_intro} belongs to $C(\bar{\Upsilon})$. Then, the unique weak solution $f$ of \cref{model_intro} belongs to $C([0,T]\times\bar{\Upsilon})$. 
\end{lemma}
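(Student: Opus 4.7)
The plan is to adapt the strategy of \cref{lem:reg at initial and final time} verbatim to the three-dimensional setting, leveraging the Duhamel representation \cref{eq:f duhamel} established in \cref{lem:regularity 2D} together with the properties of the periodic kernel $\tilde{\Phi}$. Since \cref{lem:regularity 2D} already ensures continuity away from the initial time, it suffices to show that $f(t,\boldsymbol{\xi}) \to f_0(\boldsymbol{\xi}^*)$ whenever $(t,\boldsymbol{\xi}) \to (0^+,\boldsymbol{\xi}^*)$ for any $\boldsymbol{\xi}^* \in \bar{\Upsilon}$. From \cref{eq:f duhamel}, I split
\begin{equation*}
    |f(t,\boldsymbol{\xi}) - f_0(\boldsymbol{\xi}^*)| \leq J_1(t,\boldsymbol{\xi}) + J_2(t,\boldsymbol{\xi}),
\end{equation*}
where
\begin{equation*}
    J_1(t,\boldsymbol{\xi}) := \bigg| \int_\Upsilon \tilde{\Phi}(t,\boldsymbol{\xi}-\mathbf{z}) f_0(\mathbf{z}) \, \ud \mathbf{z} - f_0(\boldsymbol{\xi}^*) \bigg|
\end{equation*}
and
\begin{equation*}
    J_2(t,\boldsymbol{\xi}) := \Pe \int_0^t\!\!\int_\Upsilon \tilde{\Phi}(t-s,\boldsymbol{\xi}-\mathbf{z})\big|\nabla_{\mathbf{z}}\cdot(f(s,\mathbf{z})(1-\rho(s,\mathbf{z}))\mathbf{e}(\theta))\big|\,\ud\mathbf{z}\,\ud s.
\end{equation*}

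The control of $J_2$ is essentially identical to the argument used in the 1D case: applying the Cauchy--Schwarz inequality in $(s,\mathbf{z})$, together with the $L^\infty$ bound on $f$ established in Step 1 of \cref{lem:regularity 2D}, the bound $\rho \in [0,1]$, and the $L^2([0,T];H^1_{per}(\Upsilon))$ regularity of $f$ supplied by \cref{thm:exist-uniq-phenom}, one estimates $J_2$ by a constant multiple of $\|\tilde{\Phi}\|_{L^2(0,t;L^2(\Upsilon))}$. The explicit series formula for $\|\tilde{\Phi}\|_{L^2(0,t;L^2(\Upsilon))}^2$ stated after \cref{3d kernel} shows this quantity vanishes as $t \to 0^+$, uniformly in $\boldsymbol{\xi}$.

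For $J_1$, I approximate the periodic continuous function $f_0$ by trigonometric polynomials $\tilde{\psi}_m$ on $\bar{\Upsilon}$, with $\|f_0 - \tilde{\psi}_m\|_{L^\infty(\bar{\Upsilon})} \to 0$ as $m \to \infty$ (for example, via Fej\'er-type summation, or through Stone--Weierstrass). For each fixed $m$, using the convolution identity for Fourier series, $\tilde{\Phi}(t,\cdot)*\tilde{\psi}_m(\boldsymbol{\xi}) - \tilde{\psi}_m(\boldsymbol{\xi})$ is a finite sum of terms each carrying a factor $1-e^{-|\mathbf{n}|^2 t}$, hence converges to zero uniformly in $\boldsymbol{\xi}$ as $t\to 0^+$. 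Inserting $\tilde{\psi}_m$ via the triangle inequality,
\begin{equation*}
    J_1(t,\boldsymbol{\xi}) \leq |f_0(\boldsymbol{\xi}^*) - f_0(\boldsymbol{\xi})| + 2\|f_0 - \tilde{\psi}_m\|_{L^\infty(\bar{\Upsilon})} + |\tilde{\psi}_m(\boldsymbol{\xi}) - (\tilde{\Phi}(t,\cdot)*\tilde{\psi}_m)(\boldsymbol{\xi})|,
\end{equation*}
where I used $\|\tilde{\Phi}(t,\cdot)\|_{L^1(\Upsilon)}=1$. Given $\varepsilon>0$, I first use continuity of $f_0$ at $\boldsymbol{\xi}^*$ to pick $\boldsymbol{\xi}$ near $\boldsymbol{\xi}^*$, then $m$ large so that $\|f_0 - \tilde{\psi}_m\|_{L^\infty} < \varepsilon/6$, and finally $t$ small (depending on the now-fixed $m$) so that the third term falls below $\varepsilon/3$, yielding $J_1(t,\boldsymbol{\xi}) < \varepsilon$.

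The main subtlety compared to the 1D case is in the Fourier-decay step: in dimension one, the proof of \cref{lem:reg at initial and final time} relied on the $n^{-2}$ decay of Fourier coefficients of $C^2$ functions and the summability of $\sum n^{-2}$, but in $\mathbb{Z}^3$ the analogous sum $\sum_{\mathbf{n}} |\mathbf{n}|^{-2}$ diverges. I avoid this issue by approximating with trigonometric polynomials, reducing the relevant error to finitely many terms of the form $1-e^{-|\mathbf{n}|^2 t}$ (an alternative would be to mollify $f_0$ to a $C^k$ function with $k\geq 4$, giving $|\mathbf{n}|^{-k}$ decay that is summable in $\mathbb{Z}^3$). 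Apart from this adjustment, the argument is a direct transcription of the 1D case, and the continuity of $f$ up to the initial time on $[0,T]\times\bar{\Upsilon}$ follows.
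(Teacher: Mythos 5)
Your treatment of $J_1$ is a genuine improvement over what ``following the 1D proof to the letter'' would give: you correctly observe that in $\mathbb{Z}^3$ the sum $\sum_{\mathbf{n}}|\mathbf{n}|^{-2}$ diverges, so the $C^2$-based Fourier-decay argument from \cref{lem:reg at initial and final time} does not transfer, and your replacement by trigonometric polynomials (or $C^k$ mollification with $k\geq 4$) is a clean and valid fix. However, your control of $J_2$ contains a genuine gap, and it is the same one. The quantity $\int_0^t\|\tilde{\Phi}(\tau,\cdot)\|^2_{L^2(\Upsilon)}\,\ud\tau$ that you invoke is \emph{infinite} in three dimensions: by Plancherel, $\|\tilde{\Phi}(\tau,\cdot)\|^2_{L^2(\Upsilon)}=(2\pi)^{-3}\sum_{\mathbf{n}\in\mathbb{Z}^3}e^{-2|\mathbf{n}|^2\tau}\sim c\,\tau^{-3/2}$ as $\tau\to 0^+$, which is not integrable near the origin, and equivalently the displayed series after \cref{3d kernel} has tail $\sim\sum_{\mathbf{n}}|\mathbf{n}|^{-2}=+\infty$. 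So the Cauchy--Schwarz-in-$(s,\mathbf{z})$ step bounding $J_2$ by a multiple of $\|\tilde{\Phi}\|_{L^2(0,t;L^2(\Upsilon))}$ does not yield a finite, let alone vanishing, quantity. (The paper itself elides this point; the formula quoted after \cref{3d kernel} is vacuously ``true'' as $\infty=\infty$, and the same issue is already present in the integrability argument of Step~2 of \cref{lem:regularity 2D}.)

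The standard repair is to integrate by parts in $\mathbf{z}$ inside the Duhamel term \emph{before} estimating, moving $\nabla_{\mathbf{z}}$ onto the kernel: $J_2=\Pe\,\big|\int_0^t\int_\Upsilon \nabla_{\mathbf{z}}\tilde{\Phi}(t-s,\boldsymbol{\xi}-\mathbf{z})\cdot f(s,\mathbf{z})(1-\rho(s,\mathbf{z}))\mathbf{e}(\theta)\,\ud\mathbf{z}\,\ud s\big|\leq \Pe\,\|f\|_{L^\infty((0,T)\times\Upsilon)}\int_0^t\|\nabla\tilde{\Phi}(\tau,\cdot)\|_{L^1(\Upsilon)}\,\ud\tau$, using the $L^\infty$ bound on $f$ from Step~1 of \cref{lem:regularity 2D} and $|1-\rho|\leq 1$. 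Writing $\tilde{\Phi}$ as a lattice sum of translated Gaussians gives $\|\nabla\tilde{\Phi}(\tau,\cdot)\|_{L^1(\Upsilon)}\leq \|\nabla G(\tau,\cdot)\|_{L^1(\mathbb{R}^3)}\leq C\tau^{-1/2}$, which is integrable near $\tau=0$, so $J_2\lesssim \sqrt{t}\to 0$ uniformly in $\boldsymbol{\xi}$, as required. With this replacement, your proof of the lemma is correct; the $J_1$ portion and the overall architecture are otherwise a faithful (and in one respect corrected) adaptation of the 1D argument, as the paper intends.
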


We note, as per \cref{rem:higher reg maybe possible}, that it is likely possible to improve \cref{lem:regularity-2d-final}. However, the method for proving such a result is different from the current approach, which uses the Duhamel principle, and so we leave this for further investigation.

\section{Extension to very weak solutions for the 2D model}\label{sec:very-weak-sol}

The assumption on the initial data in $L^2_{per}(\Upsilon)$ rules out initial configurations with a fixed orientation, that is, $f_0(x,\theta) = h^0(x)\delta_{\theta = \theta^*}$ for some $\theta^* \in [0,2\pi)$ and $h^0 \in L^2_{per}(\Omega)$, where the latter is the usual Dirac delta in the angular variable only. Since such initial data belongs to $L^2(\Omega;H^s(0,2\pi))$ for $s<-1/2$, we turn our attention to initial data $f_0\in L^2_{per}(\Omega;(H^1_{per})'(0,2\pi))$. For the convenience of the reader, we clarify that, for any normed vector space $V$, 
\begin{equation}\label{eq:def periodic distribution actually}
        f \in L^2_{per}(\Omega;V) \iff \left\lbrace \begin{aligned}& f : \mathbb{R}^2 \to V, \\ 
        &\int_\Omega \Vert f(\x,\cdot) \Vert_{V}^2 \, \ud \x < +\infty, \text{ and} \\  &f(\x-\mathbf{w},\cdot)=f(\x,\cdot) \quad \text{a.e. } \x \in \Omega, \quad \forall \mathbf{w} \in (2\pi\mathbb{Z})^2, 
    \end{aligned}\right. 
\end{equation}
where the final equality on the right-hand side holds in $V$.

In view of the previous discussion, we define the following Hilbert spaces, to give meaning to a weaker formulation of the equation (\textit{cf.}~\cref{def:distributional solution}), 
\begin{equation}\label{eq:banach spaces for distributional solution}
    \begin{aligned}
        &X := L^2([0,T];H^1_{per}(\Upsilon)) \cap L^2([0,T];L^2_{per}(\Omega; H^2_{per}(0,2\pi))), \\ 
        &Y := H^1_{per}(\Upsilon) \cap L^2_{per}(\Omega;H^2_{per}(0,2\pi)). 
    \end{aligned}
\end{equation}
Note that $\Vert u \Vert_{X'}^2 = \int_0^T \Vert u(t) \Vert^2_{Y'} \, \ud t$, and $\Vert u(t) \Vert_{Y'} = \sup_{\overset{\varphi \in C^\infty_{per}(\bar{\Upsilon})}{\Vert \varphi \Vert_{Y} \leq 1}}|\langle u(t) , \varphi \rangle_\Upsilon|$. 

The initial density must now be defined in duality, \textit{i.e.}, 
\begin{equation}\label{eq:space-density-duality}
    \rho_0(\x):=\langle f_0(\x,\cdot), 1 \rangle_{(H^1_{per})'\times H^1_{per}} \qquad \mbox{a.e. } x\in\Omega,
\end{equation}
where $\langle \cdot , \cdot \rangle_{(H^1_{per})'\times H^1_{per}}$ is the duality product on $(0,2\pi)$. Note that for $f_0(\x,\theta) = h^0(\x)\delta_{\theta = \theta^*}$, the initial space density is $\rho_0(\x)=h^0(\x)$.

We now extend the concept of solution established in \cref{def:concept of solution} to admit a larger class of functions (\textit{i.e.}~smaller class of test functions). With slight abuse of terminology, we name these solutions \emph{very weak solutions}, even though only the regularity in the angular variable is affected. 

\begin{definition}\label{def:distributional solution}
A \emph{very weak solution} of Eq.~\cref{model_intro} is a curve $f$ belonging to $L^2([0,T];H^1_{per}(\Omega;(H^1_{per})'(0,2\pi))) \cap L^\infty([0,T];L^2_{per}(\Omega;(H^1_{per})'(0,2\pi))) \cap L^2((0,T)\times\Upsilon)$, with $\partial_t f \in X'$ such that, for every test function $\varphi \in Y$, there holds for a.e.~$t\in[0,T]$ 
    \begin{equation}\label{eq:weak_form_def_distributional}
    \begin{split}
    &\left\langle\partial_t f(t), \varphi\right\rangle_{\Upsilon}\!=\!\left\langle \Pe(1- \rho(t))f(t) \e(\theta),\nabla\varphi\right\rangle_{\Upsilon}\!-\!\left\langle D_e\nabla f(t),\nabla\varphi\right\rangle_{\Upsilon}+ \left\langle f(t), \partial_\theta^2 \varphi\right\rangle_{\Upsilon},\\
    &f(0,\x,\theta)=f_0(\x,\theta),\qquad \text{on } \Upsilon\times\{0\},
\end{split}
    \end{equation}
with periodic boundary conditions on $\Upsilon$, \mbox{where} $\rho(t,\mathbf{x})=\int_0^{2\pi}f(t,\x,\theta)\,\ud \theta$, and where the initial data is achieved in the sense $f(0) = f_0$ in $Y'$. 
\end{definition}
Since $Y \subset L^2_{per}(\Omega;H^2_{per}(0,2\pi))$, we have the inclusions $L^2_{per}(\Omega;(H^1_{per})'(0,2\pi)))\subset L^2_{per}(\Omega;(H_{per}^2)'(0,2\pi))\subset Y'$. It follows that $f \in L^2([0,T];L^2_{per}(\Omega;(H^1_{per})'(0,2\pi)))$ and $\partial_t f \in X'$ imply $f \in H^1([0,T];Y')$. Hence, by \cite[Sec.~5.9.2 Th.~2]{evans}, $f \in C([0,T];Y')$. Therefore, $f(0) \in Y'$ is well-defined in the previous definition.

We now give the proof of \cref{thm:very-weak-sec2}; the main theorem of this section. Since a very weak solution is an element of $L^2((0,T)\times\Upsilon)$, this result implies that the solutions never concentrate in angle to a Dirac mass, even when the initial data does. This illustrates an instantaneous smoothing property of the equation.

\begin{proof}[Proof of \cref{thm:very-weak-sec2}]
The idea of the proof is to construct a sequence of regularised initial data that approaches $f_0$ in the required norm, and to show that the corresponding regularised solutions converge to a very weak solution.

  \textit{Step 1 (regularised sequence of initial data and solutions): } Due to \cref{lem:approximation-initial-data} in \cref{sec:appendix-approximation}, 
%\begin{claim}\label{claim:regularised initial data}
there exists $(f_0^m)_{m\in\mathbb{N}}$ nonnegative elements of $C^\infty_{per}(\bar{\Upsilon})$ such that 
\begin{equation}\label{eq:regularised initial data}
    \Vert f_0 - f_0^m \Vert_{L^2(\Omega;(H^{1}_{per})'(0,2\pi))} \to 0 \qquad \text{as } m \to \infty, 
\end{equation}
with $\rho_0^m(\x) := \int_0^{2\pi} f_0^m(\x,\theta) \, \ud \theta \in [0,1]$ for a.e.~$\x \in \Upsilon$. Note that \cref{eq:regularised initial data} of course implies that there exists a positive constant $M$ independent of $m$ such that 
\begin{equation}\label{eq:your big M}
    \sup_m \Vert f_0^m \Vert_{L^2(\Omega;(H^{1}_{per})'(0,2\pi))} \leq  M <+\infty. 
\end{equation}

Since each $f_0^m$ is an admissible initial datum for \cref{thm:exist-uniq-phenom}, we deduce that there exists a unique $f_m \in L^\infty([0,T];L^2_{per}(\Upsilon))\cap L^2([0,T];H^1_{per}(\Upsilon))$ with $\partial_t f_m \in L^2([0,T];(H^1_{per})'(\Upsilon))$ solving \cref{model_intro} in the weak sense given by by \cref{def:concept of solution}. Moreover, we have the uniform bound $0 \leq \rho_m(t,\x) \leq 1$ for a.e.~$(t,x)\in(0,T)\times\Omega$. Note however that we do not have a uniform estimate on $\Vert f_m \Vert_{L^2((0,T);H^1(\Upsilon))}$ or even on $\Vert f_m \Vert_{L^2((0,T)\times\Upsilon)}$, since these latter estimates depend on $\Vert f_0^m \Vert_{L^2(\Upsilon)}$, which explodes in the limit as $m \to \infty$. We therefore require a different estimate on $(f_m)_{m\in\mathbb{N}}$.

  \textit{Step 2 (an auxiliary Dirichlet problem): } In contrast to the inverse Laplacian introduced in \cref{section:uniqueness}, we introduce $L_\theta := (-\partial^2_\theta)^{-1}:(H^1_{per})'(0,2\pi) \to H^1_{per}(0,2\pi)$, which is understood to be the unique solution of the Dirichlet problem: 
\begin{equation}\label{eq:our favorite dirichlet}
    \left\lbrace\begin{aligned}
        & (-\partial_\theta^2) L_\theta f = f \qquad \text{in } (0,2\pi), \\ 
        & L_\theta f(0)=L_\theta f(2\pi)=0. 
    \end{aligned}\right.
\end{equation}
Let us note that for any function $f\in (H^{1}_{per})'(0,2\pi)$, there holds
\begin{equation*}
        \|f\|_{(H^1_{per})'(0,2\pi)}=\sup_{{\overset{\varphi \in H^1_{per}(0,2\pi)}{\Vert \varphi \Vert_{H^1(0,2\pi)}\le1}}}\langle f,\varphi\rangle_{(H^1_{per})'\times H^1_{per}}\le\|\partial_\theta L_\theta f \|_{L^2_{per}(0,2\pi)}. 
\end{equation*}
Indeed, using the property $(-\partial_\theta^2)L_\theta f = f$, we have, where the duality product is between $H^1_{per}$ and its dual, 
\begin{equation*}
    \begin{split}
    |\langle f,\varphi\rangle|&=|\langle (-\partial_\theta^2)\circ(-\partial_\theta^2)^{-1} f,\varphi\rangle| =|\langle \partial_\theta L_\theta f,\partial_\theta\varphi\rangle_{L^2\times L^2}|
    \le\|\partial_\theta L_\theta f\|_{L^2}\|\partial_\theta \varphi\|_{L^2}.
    \end{split}
\end{equation*}
Note that the integration by parts in the above involve no additional boundary term due to the periodicity of $f$ and $L_\theta f$. By choosing the specific test function $\varphi=L_\theta f \in H^1_{per}(0,2\pi)$ in the previous estimate, we obtain the maximum value in the duality product defining the $(H^{1}_{per})'$ norm, hence 
\begin{equation}\label{eq:linking Hminus one of f with L2 of L theta}
\|f\|_{(H^{1}_{per})'(0,2\pi)}=\|\partial_\theta L_\theta f\|_{L^2(0,2\pi)}.
\end{equation}
Since $L_\theta f \in H^1_0(0,2\pi)$, the Poincar\'{e} inequality in one dimension yields 
\begin{equation}\label{eq:poincare in theta}
    \Vert L_\theta f \Vert_{L^2(0,2\pi)}^2 \leq C_P \Vert \partial_\theta L_\theta f \Vert^2_{L^2(0,2\pi)} = C_P \Vert f \Vert^2_{(H^1_{per})'(0,2\pi)}\leq C_P \Vert f \Vert^2_{L^2_{per}(0,2\pi)}, 
\end{equation}
where the constant $C_P$ depends only on the domain $(0,2\pi)$, and \cref{eq:linking Hminus one of f with L2 of L theta} was used to obtain the equality. In turn, given a function $f$ such that $\nabla f \in L^2_{per}(0,2\pi) \subset (H^1_{per})'(0,2\pi)$, there holds the estimate 
\begin{equation}\label{eq:to show its admissible ii}
    \Vert L_\theta \nabla f \Vert_{L^2(0,2\pi)}^2 \leq C_P \Vert \nabla f \Vert^2_{(H^1_{per})'(0,2\pi)} \leq C_P \Vert \nabla f \Vert^2_{L^2 (0,2\pi)}, 
\end{equation}
and, directly from \cref{eq:linking Hminus one of f with L2 of L theta}, 
\begin{equation}\label{eq:to justify finiteness of grad theta nabla L f}
    \Vert \partial_\theta L_\theta \nabla f \Vert_{L^2(0,2\pi)}^2 = \Vert \nabla f \Vert^2_{(H^1_{per})'(0,2\pi)} \leq \Vert \nabla f \Vert^2_{L^2 (0,2\pi)}. 
\end{equation}

  \textit{Step 3 (energy estimate on first derivative independent of $m$): } We now apply the operator $L_\theta$ to the sequence $(f_m)_{m\in\mathbb{N}}$. Note that, for a.e.~$t\in(0,T)$, 
\begin{equation*}
    \Vert L_\theta f_m(t) \Vert_{H^1(\Upsilon)}^2 = \Vert L_\theta f_m(t) \Vert_{L^2(\Upsilon)}^2 + \Vert \nabla L_\theta f_m(t) \Vert^2_{L^2(\Upsilon)} + \Vert \partial_\theta L_\theta f_m(t) \Vert^2_{L^2(\Upsilon)}. 
\end{equation*}
It then follows from \cref{eq:poincare in theta} and \cref{eq:to show its admissible ii} that 
\begin{equation*}
    \begin{aligned}
        \Vert L_\theta f_m(t) \Vert_{H^1(\Upsilon)}^2 \leq &\int_\Omega \Vert L_\theta f_m(t,\x,\cdot) \Vert_{L^2(0,2\pi)}^2 \, \ud \x + \int_\Omega \Vert \nabla L_\theta f_m(t,\x,\cdot) \Vert^2_{L^2(0,2\pi)} \, \ud \x \\ 
        &+ \int_\Omega \Vert \partial_\theta L_\theta f_m(t,\x,\cdot) \Vert^2_{L^2(0,2\pi)} \, \ud \x \\ 
        \leq & 2C_P \int_\Omega \big( \Vert f_m(t,\x,\cdot) \Vert^2_{L^2(0,2\pi)} + \Vert \nabla f_m(t,\x,\cdot) \Vert^2_{L^2(0,2\pi)} \big) \, \ud \x. 
    \end{aligned}
\end{equation*}
Thus, $\Vert L_\theta f_m(t) \Vert_{H^1(\Upsilon)}^2 \leq 2C_P \Vert f_m \Vert_{H^1(\Upsilon)}^2 < +\infty$. Therefore, $L_\theta f_m(t)$ is an admissible test function for the weak formulation of \cref{def:concept of solution}. This latter observation enables us to test the equation with $L_\theta f_m$ and to obtain an estimate independent of $m$. We get, for a.e.~$(t,\x) \in (0,T)\times\Omega$, 
\begin{equation*}
    \begin{split}
        \frac{1}{2}\frac{d}{dt}\|f_m(t,\x,\cdot)\|_{(H^{1}_{per})'(0,2\pi)}^2=&\frac{1}{2}\frac{d}{dt}\|\partial_\theta L_\theta f_m(t,\x,\cdot) \|_{L^2(0,2\pi)}^2 \\ 
        =& \langle \partial_t f_m(t,\x,\cdot), L_\theta f_m(t,\x,\cdot)\rangle_{(H^1)' \times H^1}, 
        \end{split}
        \end{equation*}
        where we used that $\partial_t f \in L^2([0,T];(H^1_{per})'(\Upsilon))$ and $L_\theta f_m \in L^2([0,T];H^1_{per}(\Upsilon))$. Hence, by integrating the previous line with respect to the space variable and using the weak formulation, also noting that $\langle\nabla f_m,\nabla L_\theta f_m\rangle_\Upsilon = \|\nabla \partial_\theta L_\theta f_m\|_{L^2(\Upsilon)}^2$ and $\langle \partial_\theta f_m,\partial_\theta L_\theta f_m\rangle_\Upsilon = \|f_m\|_{L^2(\Upsilon)}^2$, 
        \begin{equation}\label{eq:pre gronwall distributional}
        \begin{split}
        \frac{1}{2}\frac{d}{dt}\|f_m(t)  \|_{L^2(\Omega;(H^{1}_{per})'(0,2\pi))}^2 = \Pe\langle (1\!&-\!\rho_m)f_m \e(\theta), \!\nabla L_\theta f_m\rangle_\Upsilon \! \\ 
        &-\!\|\nabla \partial_\theta L_\theta f_m\|_{L^2(\Upsilon)}^2 -\!\|f_m\|_{L^2(\Upsilon)}^2, 
    \end{split}
\end{equation}
where we used the equality $(-\partial^2_\theta)L_\theta f_m = f_m$, integrated by parts in $\theta$ the last two duality products, and used the Dirichlet problem \cref{eq:our favorite dirichlet}. Next, observe that $f_m \e(\theta) = (-\partial_\theta^2)L_\theta f_m \e(\theta) = -\partial_\theta ( \partial_\theta L_\theta f_m \e(\theta) ) - \partial_\theta L_\theta f_m \e^\perp(\theta)$, where $\e^\perp(\theta) = (-\sin \theta, \cos \theta)$. Using the fact that $\rho_m$ is independent of the angle variable, it follows from an integration by parts that 
\begin{equation*}
    \begin{split}
        \langle (1-\rho_m)f_m \e(\theta) , \nabla L_\theta f_m \rangle_\Upsilon = \langle (1-\rho_m) &\partial_\theta L_\theta f_m \e(\theta) , \nabla \partial_\theta L_\theta f_m \rangle_\Upsilon\\ &- \langle (1-\rho_m) \partial_\theta L_\theta f_m \e^\perp(\theta) , \nabla L_\theta f_m \rangle_\Upsilon,
    \end{split}
\end{equation*}
whence the H\"{o}lder inequality and the bound for $\rho_m$ give $|\langle (1-\rho_m)f_m \e(\theta), \nabla L_\theta f_m \rangle_\Upsilon|         \leq \Vert \partial_\theta L_\theta f_m\Vert_{L^2(\Upsilon)} \Vert \nabla \partial_\theta L_\theta f_m \Vert_{L^2(\Upsilon)} + \Vert \partial_\theta L_\theta f_m \Vert_{L^2(\Upsilon)} \Vert \nabla L_\theta f_m \Vert_{L^2(\Upsilon)}$. Thus, by applying the Poincar\'{e} inequality \cref{eq:poincare in theta} to bound $\nabla L_\theta f_m$ by $\nabla \partial_\theta L_\theta f_m$ and Young's inequality, 
\begin{equation*}
    \begin{aligned}
        |\langle (1\!-\!\rho_m)f_m \e(\theta) ,\! \nabla L_\theta f_m \rangle_\Upsilon|
        \leq   \Pe(1\!+\!C_P) \Vert \partial_\theta L_\theta f_m\Vert_{L^2(\Upsilon)}^2 \!+\! \frac{1}{2 \Pe} \Vert \partial_\theta \nabla L_\theta f_m \Vert_{L^2(\Upsilon)}^2. 
    \end{aligned}
\end{equation*}
Combining the above with \cref{eq:pre gronwall distributional}, using \cref{eq:to justify finiteness of grad theta nabla L f} to rewrite $\|\nabla \partial_\theta L_\theta f_m\|_{L^2(\Upsilon)}^2$ and \cref{eq:linking Hminus one of f with L2 of L theta} to rewrite $\Vert \partial_\theta L_\theta f_m\Vert^2_{L^2(\Upsilon)}$, and integrating in time, we get, for a.e.~$t\in(0,T)$, 
\begin{equation*}
    \begin{aligned}
        \|f_m(t)\|_{L^2(\Omega;(H^{1}_{per})'(0,2\pi))}^2 + \int_0^t \|\nabla f_m(\tau)\|_{L^2(\Omega;(H^1_{per})'(0,2\pi))}^2 \, \ud \tau + 2 \int_0^t \|f_m(\tau)\|_{L^2(\Upsilon)}^2 \, \ud \tau& \\ 
        \quad \le \|f_0^m \|_{L^2(\Omega;(H^{1}_{per})'(0,2\pi))}^2 + \Pe^2(1+C_P) \int_0^t \|f_m(\tau)\|_{L^2(\Omega;(H^{1}_{per})'(0,2\pi))}^2 \, \ud \tau&. 
    \end{aligned}
\end{equation*}
Gr\"{o}nwall's Lemma yields $\|f_m(t)\|_{L^2(\Omega;(H^{1}_{per})'(0,2\pi))}^2 \le M^2 \exp(\Pe^2(1+C_P)T)$ a.e.~$t \in (0,T)$, and we recall from \cref{eq:your big M} that the right-hand side is independent of $m$. Hence, we obtain, for some positive constant $M_1$ independent of $m$, 
\begin{equation}\label{eq:the uniform bound for the distributional solution}
    \begin{aligned}
        \esssup_{t\in[0,T]}\!\|f_m(t)\|_{L^2(\Omega;(H^{1}_{per})'(0,2\pi))}^2\! &+\! \int_0^T \!\|\nabla f_m(t)\|_{L^2(\Omega;(H^1_{per})'(0,2\pi))}^2 \, \ud t \\ 
        &+ \!\int_0^T \!\|f_m(t)\|_{L^2(\Upsilon)}^2 \, \ud t \leq M_1. 
    \end{aligned}
\end{equation}
Thus, $\|f_m \|_{L^\infty([0,T];L^2(\Omega;(H^{1}_{per})'(0,2\pi)))}$,$\| f_m \|_{L^2([0,T];H^1(\Omega;(H^1_{per})'(0,2\pi)))}$, $\|f_m \|_{L^2((0,T)\times\Upsilon)}$ are bounded independently of $m$.

  \textit{Step 4 (corresponding estimate on the time derivatives): } We now return to \cref{model_intro} in order to obtain a similar uniform estimate on the sequence of time derivatives $(\partial_t f_m)_{m\in\mathbb{N}}$. Fix $\varphi$ to be smooth and periodic on $\bar{\Upsilon}$. Such a function is an admissible test function for the weak formulation of the equation, and we get 
\begin{equation*}
    \begin{aligned}
        |\langle \partial_t f_m(t) , \varphi \rangle_\Upsilon| \leq & |\Pe\langle (1-\rho_m) f_m \e(\theta) , \nabla \varphi \rangle_\Upsilon| + |\langle \nabla f_m , \nabla \varphi \rangle_\Upsilon| + |\langle \partial_\theta f_m , \partial_\theta \varphi \rangle_\Upsilon| \\ 
        \leq &  \Pe \Vert f_m \Vert_{L^2(\Upsilon)}\Vert \nabla \varphi \Vert_{L^2(\Upsilon)} + \Vert f_m \Vert_{L^2(\Upsilon)} \Vert \partial_\theta^2 \varphi \Vert_{L^2(\Upsilon)} \\ 
        &+ \Vert \nabla f_m \Vert_{L^2(\Omega;(H^1_{per})'(0,2\pi))} \Vert \nabla \varphi \Vert_{L^2(\Omega;H^1(0,2\pi))} \\ 
        \leq & \big((1+ \Pe) \Vert f_m(t) \Vert_{L^2(\Upsilon)} + \Vert \nabla f_m(t) \Vert_{L^2(\Omega;(H^1_{per})'(0,2\pi))} \big) \big( \Vert \varphi \Vert_{H^1(\Upsilon)} \\ 
        &+ \Vert \varphi \Vert_{L^2(\Omega;H^2(0,2\pi))} \big), 
    \end{aligned}
\end{equation*}
for a.e.~$t\in(0,T)$. We therefore obtain, using \cref{eq:banach spaces for distributional solution} and \cref{eq:banach spaces for distributional solution}, $\Vert \partial_t f_m(t) \Vert_{Y'} \leq ((1+ \Pe) \Vert f_m(t) \Vert_{L^2(\Upsilon)} + \Vert \nabla f_m(t) \Vert_{L^2(\Omega;(H^1_{per})'(0,2\pi))} )$, whence, using \cref{eq:the uniform bound for the distributional solution} to bound the right-hand side of the above, we get, with $M_2$ a positive constant independent of $m$, 
\begin{equation}\label{eq:time deriv distribution pre estimate}
    \Vert \partial_t f_m \Vert_{X'} \leq M_2. 
\end{equation}

  \textit{Step 5 (estimates on $\rho$ independent of $m$): } We infer uniform estimates on the sequence $(\rho_m)_{m\in\mathbb{N}}$ from those on $(f_m)_{m\in\mathbb{N}}$. Firstly, $\rho_m(t,\x) = \int_0^{2\pi} f_m(t,\x,\theta) \, \ud \theta \qquad$ for a.e.~$(t,\x)\in(0,T)\times\Omega$, from which it is easy to see, since the constant function $1/2\pi$ belongs to $H^1_{per}(0,2\pi)$ and has norm equal to $1$ therein,
\begin{equation}\label{eq:towards unif est for rho m i}
    |\rho_m(t,\x)| \leq 2\pi \sup_{\Vert \varphi \Vert_{H^1_{per}(0,2\pi)} \leq 1} \bigg|\int_0^{2\pi} f_m(t,\x,\theta) \varphi(\theta) \, \ud \theta \bigg| = 2\pi \Vert f_m(t,\x,\cdot) \Vert_{(H^1_{per})'(0,2\pi)}, 
\end{equation}
for a.e.~$(t,\x)\in(0,T)\times\Omega$. The same estimate on the derivative yields $|\nabla \rho_m(t,\x)| \leq 2\pi \Vert \nabla f_m(t,\x,\cdot) \Vert_{(H^1_{per})'(0,2\pi)}$ a.e.~in $(0,T)\times\Omega$. It follows directly from the comment under the uniform estimate \cref{eq:the uniform bound for the distributional solution} that there holds 
\begin{equation}\label{eq:unif estimates on rho m i}
    \Vert \rho_m \Vert_{L^\infty([0,T];L^2_{per}(\Omega))} + \Vert \rho_m \Vert_{L^2([0,T];H^1_{per}(\Omega))} \leq M_3, 
\end{equation}
for some $M_3$ independent of $m$. Finally, for the time derivatives, by returning to the weak formulation of \cref{modelrho_intro} and arguing as we did to obtain \cref{eq:time deriv distribution pre estimate}, we obtain 
\begin{equation*}
    |\langle \partial_t \rho_m(t),\varphi \rangle_\Omega| \leq  \Pe |\Omega|^{1/2} \Vert \nabla \varphi \Vert_{L^2(\Omega)} + \Vert \nabla \rho_m(t) \Vert_{L^2(\Omega)}\Vert \nabla \varphi \Vert_{L^2(\Omega)}, 
\end{equation*}
whence $\Vert \partial_t \rho_m(t) \Vert_{(H^1_{per})'(\Omega)} \leq  \Pe |\Omega|^{1/2} + \Vert \nabla \rho_m(t) \Vert_{L^2(\Omega)}$ for a.e.~$t \in (0,T)$. As a result, we deduce from \cref{eq:unif estimates on rho m i} the uniform estimate 
\begin{equation}\label{eq:unif estimates on rho m ii}
    \Vert \partial_t \rho_m \Vert_{L^2([0,T];(H^1_{per})'(\Omega))} \leq M_4, 
\end{equation}
for some positive constant $M_4$ also independent of $m$.

  \textit{Step 6 (compactness and passage to the limit in $m$): } Due to the uniform estimate $\sup_m \Vert f_m \Vert_{L^2((0,T)\times\Upsilon)} < +\infty$ deduced under \cref{eq:the uniform bound for the distributional solution}, we have from the Banach--Alaoglu Theorem that there exists $f \in L^2((0,T)\times\Upsilon)$ such that 
\begin{equation*}
    f_m \rightharpoonup f \qquad \text{weakly in } L^2((0,T)\times\Upsilon), 
\end{equation*}
up to a subsequence, which, with slight abuse of notation, we still label as $(f_m)_{m\in\mathbb{N}}$. In turn, there holds $\rho_m \rightharpoonup \rho$ weakly in $L^2((0,T)\times\Omega)$, where $\rho(t,\x) = \int_0^{2\pi} f(t,\x,\theta) \, \ud \theta$ for a.e.~$(t,\x)\in(0,T)\times\Omega$. Meanwhile, given the estimates \cref{eq:unif estimates on rho m i} and \cref{eq:unif estimates on rho m ii}, the Aubin--Lions Lemma implies the strong convergence 
\begin{equation*}
    \rho_m \to \rho \qquad \text{strongly in } L^2((0,T)\times\Omega). 
\end{equation*}
The term $(1-\rho_m)f_m$ is therefore the product of a strongly converging sequence and a weakly converging sequence in $L^2((0,T)\times\Omega)$. We are therefore able to pass to the limit weakly in $L^2((0,T)\times\Omega)$ in the nonlinear drift term. 

Additionally, taking further subsequences if necessary, the remaining uniform estimates from \cref{eq:the uniform bound for the distributional solution} and \cref{eq:time deriv distribution pre estimate}, as well as the Banach--Alaoglu Theorem, imply the convergences $\partial_t f_m \overset{*}{\rightharpoonup} \partial_t f$ in $X'$, $\nabla f_m \overset{*}{\rightharpoonup} \nabla f$  in $L^2([0,T];L^2(\Omega;(H^1_{per})'(0,2\pi)))$, which proves the convergence of the relevant duality products to the limit equation. For the last duality product, we use an integration by parts to write $\langle \partial_\theta f_m , \partial_\theta \varphi \rangle_\Upsilon = - \langle f_m , \partial^2_\theta \varphi \rangle_\Upsilon \to \langle f , \partial^2_\theta \varphi \rangle_\Upsilon$ as $m \to \infty$, where we used the weak convergence $f_m \rightharpoonup f$ in $L^2((0,T)\times\Upsilon)$. Since the test function $\varphi$ is chosen to belong to the space $X$, we obtain the limit equation as $m\to\infty$, as required. 

\textit{Step 7 (initial data): } Recalling also the discussion under \cref{def:distributional solution}, the estimates \cref{eq:the uniform bound for the distributional solution} and \cref{eq:time deriv distribution pre estimate} imply that $f_m,f \in H^1([0,T];Y')$. By \cite[Sec.~5.9.2 Th.~2]{evans}, $f_m,f \in C([0,T];Y')$. Recall from \cref{eq:regularised initial data} $\Vert f_0 - f_0^m \Vert_{L^2(\Omega;(H^{1}_{per})'(0,2\pi))} \to 0$, from which $\Vert f_0-f_0^m \Vert_{Y'} \to 0$; since $L^2(\Omega;(H^1_{per})'(0,2\pi)) \hookrightarrow Y'$ continuously. From \cref{eq:the uniform bound for the distributional solution}, $(f_m)_{m\in\mathbb{N}}$ is bounded in  ${L^2([0,T];L^2_{per}(\Upsilon))}$. Using the Rellich--Kondrachov Compactness Embedding and properties of the adjoint map, $L^2_{per}(\Upsilon)$ embeds compactly into $Y'$; since $Y\hookrightarrow H^1_{per}(\Upsilon)$ continuously, and $H^1_{per}(\Upsilon)$ is compactly embedded in $L^2_{per}(\Upsilon)$. Meanwhile, the sequence $(\partial_t f_m)_{m\in\mathbb{N}}$ is uniformly bounded in $X'=L^2([0,T];Y')$. Thus the Aubin--Lions Lemma implies $f_m \to f$ strongly in $X'$, and hence, up to taking a further subsequence, $f_m(t) \to f(t)$ strongly in $Y'$ a.e.~$t\in[0,T]$. Uniqueness of limits in $Y'$ at the Lebesgue point $t=0$ implies $f(0)=f_0$ in $Y'$. 
\end{proof}

\appendix

\section{The system of ODEs for the Galerkin approximation in space}\label{section:big ode}
Throughout this section, $m \in \mathbb{N}$ is fixed. For $k\in\{0,\dots,n\}$, we define  \begin{equation}\label{eq:approx-a-b second}
    \begin{aligned}
        a_k ^{n,m}(t,x,y):= &\sum_{q=0}^m\sum_{p=0}^m\alpha_{1,p,q}^{k,m}(t)\cos px \cos qy + \sum_{q=1}^m\sum_{p=0}^m\alpha_{2,p,q}^{k,m}(t)\cos px \sin qy \\ 
        &+ \sum_{q=0}^m\sum_{p=1}^m\alpha_{3,p,q}^{k,m}(t)\sin px \cos qy + \sum_{q=1}^m\sum_{p=1}^m\alpha_{4,p,q}^{k,m}(t)\sin px \sin qy, \\
        b_k ^{n,m}(t,x,y):= &\sum_{q=0}^m\sum_{p=0}^m\beta_{1,p,q}^{k,m}(t)\cos px \cos qy + \sum_{q=1}^m\sum_{p=0}^m\beta_{2,p,q}^{k,m}(t)\cos px \sin qy \\ 
        &+ \sum_{q=0}^m\sum_{p=1}^m\beta_{3,p,q}^{k,m}(t)\sin px \cos qy + \sum_{q=1}^m\sum_{p=1}^m\beta_{4,p,q}^{k,m}(t)\sin px \sin qy, 
    \end{aligned}
\end{equation}
where $\{\alpha_{j,p,q}^{k,m},\beta_{j,p,q}^{k,m}\}$ are chosen to be the solutions of a specific ODE system. 

Recall that the products of sines and cosines form an orthogonal basis of $L^2_{per}(\Omega)$, which motivates the approximations \cref{eq:approx-a-b second}. Our aim is to recover the coefficients $\{a_{k},b_{k}\}_{k=0}^n$ as the limits as $m \to \infty$ of the coefficients defined in \cref{eq:approx-a-b second}. To do so, we impose that $\{a_k ^{n,m},b_k ^{n,m}\}$ solve \cref{eq:parabolic system a-b} weakly (\textit{i.e.}~in duality with $H^1_{per}(\Omega)$) and with initial data $\{a_k ^{n,m}(0),b_k ^{n,m}(0)\}$ computed by testing $\{a_{k}^0,b_{k}^0\}$ with products of sines and cosines up to frequencies $m$ inclusive and adding these contributions, \textit{i.e.}, 
\begin{equation}\label{eq-eg a0km initial data for ode system}
    \begin{aligned}
        a^0_{k,m}(\x) = &\sum_{q=0}^m\sum_{p=0}^m C_{1,p,q} \cos px \cos qy +\sum_{q=1}^m\sum_{p=0}^m C_{2,p,q} \cos px \sin q y  \\ 
        &+ \sum_{q=0}^m\sum_{p=1}^m C_{3,p,q} \sin px \cos qy + \sum_{q=1}^m\sum_{p=1}^m C_{4,p,q} \sin px \sin qy, 
    \end{aligned}
\end{equation}
where $C_{1,p,q} = \int_\Omega \cos px \cos qy a^0_k(\x) \, \ud \x$, $C_{2,p,q} = \int_\Omega \cos px \sin qy a^0_k(\x) \, \ud \x$, $C_{3,p,q}=\int_\Omega \sin px \cos qy a^0_k(\x) \, \ud \x$, and $C_{4,p,q}=\int_\Omega \sin px \sin qy a^0_k(\x) \, \ud \x$ (up to multiples of $2\pi$). The expression for $b^0_{k,m}$ is similar. By construction, $(a^0_{k,m},b^0_{k,m}) \to (a^0_kb^0_k)$ strongly in $L^2(\Omega)$ as $m\to\infty$.

By substituting appropriate test functions into the aforementioned weak formulations for $a_k ^{n,m}$ and $b_k ^{n,m}$, we obtain a system of ODEs with $\alpha^{k,m}_{j,p,q},\beta^{k,m}_{j,p,q}$ as the unkowns.

Testing against $\varphi = 1$ in the equation for $a_{0}^{m}$ in \cref{eq:parabolic system a-b} we find  $4\pi^2\frac{d \alpha^{0,m}_{1,0,0}}{dt}(t) = 0$, whence $\alpha^{0,m}_{1,0,0}(t) = \alpha^{0,m}_{1,0,0}(0)$ for all $t \in [0,T]$. Moreover, testing against $\varphi = 1$ in the equations for $a_{1}^{m}$ and $b_{1}^{m}$ in \cref{eq:parabolic system a-b}, we obtain $4\pi^2\frac{d \alpha^{k,m}_{1,0,0}}{dt}(t) = -4\pi^2 k^2\alpha^{k,m}_{1,0,0}(t)$ , $4\pi^2 \frac{d \beta^{k,m}_{1,0,0}}{dt}(t) = - 4\pi^2 k^2\beta^{k,m}_{1,0,0}(t)$ for $k \in \{1,\dots,n\}$. All coefficients $\{\alpha^{k,m}_{1,0,0},\beta^{k,m}_{1,0,0}\}_{k=0}^n$ are therefore determined. 

Then, testing the equations for $a_k ^{n,m},b_k ^{n,m}$ in \cref{eq:parabolic system a-b} with $\varphi = \cos p x$ for $p \in \{1,\dots,m\}$, we get 
\begin{equation*}
    2\pi^2\frac{d  \alpha^{0,m}_{1,p,0}}{dt}(t) -\Pe\big\langle (1-(a_{0}^{m}(t))_+)_+a_{1}^{m}(t),(-p \sin p x)\big\rangle = -D_e p^2 2\pi^2  \alpha^{0,m}_{1,p,0}(t), 
\end{equation*}
\begin{equation*}
\begin{aligned}
    2\pi^2\frac{d \alpha^{1,m}_{1,p,0}}{dt}(t) -\frac{\Pe}{2}\big\langle (1-(a_{0}^{m}(t))_+)_+(2a_{0}^{m}(t)+&a_{2,m}(t)),(-p \sin p x)\big\rangle = \\ &-D_e p^2 2\pi^2 \alpha^{1,m}_{1,p,0}(t)- 2\pi^2 \alpha^{1,m}_{1,p,0}(t),\\
    2\pi^2\frac{d \beta^ {1,m}_{1,p,0}}{dt}(t) -\frac{\Pe}{2}\big\langle (1-(a_{0}^{m}(t))_+)_+b_{2,m}(t),(-&p \sin p x)\big\rangle = -D_e p^2 2\pi^2 \beta^{1,m}_{1,p,0}(t)\\ & - 2\pi^2 \beta^{1,m}_{1,p,0}(t),
\end{aligned}
\end{equation*}
while, for $1 < k < n$, 
\begin{equation*}
\begin{aligned}
    2\pi^2\frac{d \alpha^{k,m}_{1,p,0}}{dt}(t) -\frac{\Pe}{2}\big\langle (1-(a_{0}^{m}(t))_+)_+(a_{k-1,m}(t)+&a_{k+1,m}(t)),(-p \sin p x)\big\rangle =\\ -D_e p^2 2\pi^2 \alpha^{k,m}_{1,p,0}(t)
    &- 2\pi^2 k^2 \alpha^{k,m}_{1,p,0}(t),\\
    2\pi^2\frac{d \beta^{k,m}_{1,p,0}}{dt}(t) -\frac{\Pe}{2}\big\langle (1-(a_{0}^{m}(t))_+)_+(b_{k-1,m}(t)+&b_{k+1,m}(t)),(-p \sin p x)\big\rangle =\\ -D_e p^2 2\pi^2 \beta^{k,m}_{1,p,0}(t)
    &- 2\pi^2 k^2 \beta^{k,m}_{1,p,0}(t),
\end{aligned}
\end{equation*}
and 
\begin{equation*}
\begin{aligned}
    2\pi^2\frac{d \alpha^{n,m}_{1,p,0}}{dt}(t) -\frac{\Pe}{2}\big\langle (1-(a_{0}^{m}(t))_+)_+&a_{n-1,m}(t),(-p \sin p x)\big\rangle = -D_e p^2 2\pi^2 \alpha^{n,m}_{1,p,0}(t)\\
    &-2\pi^2 n^2 \alpha^{n,m}_{1,p,0}(t),\\
    2\pi^2\frac{d \beta^{n,m}_{1,p,0}}{dt}(t) -\frac{\Pe}{2}\big\langle (1-(a_{0}^{m}(t))_+)_+&b_{n-1,m}(t),(-p \sin p x)\big\rangle = -D_e p^2 2\pi^2 \beta^{n,m}_{1,p,0}(t)\\
    &- 2\pi^2 n^2 \beta^{n,m}_{1,p,0}(t). 
\end{aligned}
\end{equation*}
The ODEs for the coefficients $\{\alpha_{3,p,0}^{k,m},\beta_{3,p,0}^{k,m}\}_{k=0}^n$ are obtained similarly by testing the equations with $\varphi = \sin p x$ for $p \in \{1,\dots,m\}$ and have a similar structure to those for $\{\alpha_{1,p,0}^{k,m},\alpha_{1,p,0}^{k,m}\}_{k=0}^n$. The same is true for the ODEs for the coefficients $\{\alpha_{1,0,q}^{k,m},\beta_{1,0,q}^{k,m}\}_{k=0}^n$ (obtained by testing \cref{eq:parabolic system a-b} with $\varphi = \cos q y$ for $q \in \{1,\dots,m\}$) and those for the coefficients $\{\alpha_{2,0,q}^{k,m},\beta_{2,0,q}^{k,m}\}_{k=0}^n$ (obtained by testing \cref{eq:parabolic system a-b} $\varphi = \sin q y$ for $q \in \{1,\dots,m\}$).

Similarly, the ODEs for the coefficients $\{\alpha_{1,p,q}^{k,m},\beta_{1,p,q}^{k,m}\}_{k=0}^n$ are obtained by testing \cref{eq:parabolic system a-b} with $\varphi = \cos p x \cos q y$ for $p,q \in \{1,\dots,m\}$. In detail, we obtain 
\begin{align*}
    \frac{d  \alpha^{0,m}_{1,p,q}}{dt} \!=\!\frac{\Pe}{\pi^2}\big\langle (1\!-\!(a_{0}^{m}(t))_+)_+&(a_{1}^{m}(t),b_{1}^{m}(t)),\!(\!-p \sin p x \cos q y,\!-q \sin qy \cos px)\big\rangle_{\Omega}\\ 
    &-D_e (p^2+q^2)  \alpha^{0,m}_{1,p,q}(t), 
\end{align*}
\begin{equation*}
    \begin{aligned}
    &\pi^2\frac{d \alpha^{1,m}_{1,p,q}}{dt}(t) +D_e (p^2+q^2) \pi^2 \alpha^{1,m}_{1,p,q}(t)  + \pi^2 \alpha^{1,m}_{1,p,q}(t)=\\
    &\frac{\Pe}{2}\big\langle (1\!-\!(a_{0}^{m}(t))_+)_+(2a_{0}^{m}(t)\!+\!a_{2,m}(t),b_{2,m}(t)),(\!-p \sin p x \cos q y,\!-q \sin qy \cos p x)\big\rangle_{\Omega}, \\ 
    &\pi^2\frac{d \beta^{1,m}_{1,p,q}}{dt}(t) 
    +D_e (p^2+q^2) \pi^2 \beta^{1,m}_{1,p,q}(t)  + \pi^2 \beta^{1,m}_{1,p,q}(t)=\\
    &\frac{\Pe}{2}\big\langle (1\!-\!(a_{0}^{m}(t))_+)_+(b_{2,m}(t),2a_{0}^{m}(t)\!-\!a_{2,m}(t)),(\!-p \sin p x \cos q y,\!-q \sin qy \cos p x)\big\rangle_{\Omega}, 
\end{aligned}
\end{equation*}
while, for $1 < k < n$, 
\begin{equation*}
    \begin{aligned}
    &\frac{2\pi^2}{\Pe}\frac{d \alpha^{k,m}_{1,p,q}}{dt}(t) \!+\! \frac{2 D_e}{\Pe} (p^2\!+\!q^2) \pi^2 \alpha^{k,m}_{1,p,q}(t)  + \frac{2 \pi^2}{\Pe} k^2 \alpha^{k,m}_{1,p,q}(t)=\\
    &\big\langle \!(\!1\!-\!(a_{0}^{m})_+)_+(a_{k-1,m}\!+\!a_{k+1,m},b_{k+1,m}\!-\!b_{k-1,m}),\!(\!-p \sin p x \cos q y,\!-q \sin qy \cos p x)\big\rangle_{\Omega},\\ 
    &\frac{2\pi^2}{\Pe}\frac{d \beta^{k,m}_{1,p,q}}{dt}(t) 
    \!+\! \frac{2 D_e}{\Pe} (p^2\!+\!q^2) \pi^2 \beta^{k,m}_{1,p,q}(t)  + \frac{2 \pi^2}{\Pe} k^2 \beta^{k,m}_{1,p,q}(t)=\\
    &\big\langle \!(\! 1\!-\!(a_{0}^{m})_+)_+(b_{k-1,m}\!+\!b_{k+1,m},a_{k-1,m}\!-\!a_{k+1,m}),\!(\!-p \sin p x \cos q y,\!-q\sin qy \cos p x)\big\rangle_{\Omega}, 
\end{aligned}
\end{equation*}
and 
\begin{equation*}
    \begin{aligned}
    &\pi^2\frac{d \alpha^{n,m}_{1,p,q}}{dt}(t)
    +D_e (p^2+q^2) \pi^2 \alpha^{n,m}_{1,p,q}(t)  + \pi^2 n^2 \alpha^{n,m}_{1,p,q}(t)=\\
    &\frac{\Pe}{2}\big\langle (1-(a_{0}^{m}(t))_+)_+(a_{n-1,m}(t),-b_{n-1,m}(t)),(-p \sin p x \cos q y,-q \sin qy \cos p x)\big\rangle_{\Omega}, \\ 
    &\pi^2\frac{d \beta^{n,m}_{1,p,q}}{dt}(t)
    +D_e (p^2+q^2) \pi^2 \beta^{n,m}_{1,p,q}(t)  + \pi^2 n^2 \beta^{n,m}_{1,p,q}(t)=\\
    &\frac{\Pe}{2}\big\langle (1-(a_{0}^{m}(t))_+)_+(b_{n-1,m}(t),a_{n-1,m}(t)),(-p \sin p x \cos q y,-q \sin q y \cos p x)\big\rangle_{\Omega}. 
\end{aligned}
\end{equation*}
The ODEs for the coefficients $\{\alpha_{3,p,q}^{k,m},\beta_{3,p,q}^{k,m}\}_{k=0}^n$ are obtained similarly by testing the equations with $\varphi = \sin p x \cos q y$ for $p,q \in \{1,\dots,m\}$, and have a similar structure to those of $\{\alpha^{n,m}_{1,p,q},\beta^{n,m}_{1,p,q}\}_{k=0}^n$. The same is true for the ODEs for the coefficients $\{\alpha_{2,p,q}^{k,m},\beta_{2,p,q}^{k,m}\}_{k=0}^n$ (obtained by testing the equations with $\varphi = \cos p x \sin q y$ for $p,q\in\{1,\dots,m\}$) and those for the coefficients $\{\alpha_{4,p,q}^{k,m},\beta_{4,p,q}^{k,m}\}_{k=0}^n$ (obtained by testing the equations with $\varphi = \sin p x \sin q y$ for $p,q \in \{1,\dots,m\}$). 

As was already mentioned in Step 1 of \cref{subsec:existence of parabolic system further galerkin}, the previous system may be rewritten in the form of \cref{eq:ODE system Lambda}, \textit{i.e.}~$   \Lambda_m'(t) = F(t,\Lambda_m(t)) $ for $t \in [0,T]$, with initial condition $\Lambda_m(0) = \Lambda_{m,0}$, where $\Lambda_m = \{\alpha^{k,m}_{j,p,q}\}_{k,j,p,q}$ is the tensor of coefficients, with initial value $\Lambda_{m,0}$, and $F$ is locally Lipschitz since the function $x \mapsto x_+^2$ is locally Lipschitz. The initial data is given by the explicit formulas  \begin{equation}\label{eq:initial data alpha beta i}
    \begin{aligned}
    4\pi^2 \alpha^{0,m}_{1,0,0}(0) = \int_\Omega a^0_{0,m} \, \ud x \, \ud y, \qquad & 4\pi^2 \alpha_{1,0,0}^{k,m}(0) = \int_\Omega a^0_{k,m}(x,y) \, \ud x \ud y , 
    \end{aligned}
\end{equation}
where we recall the expression for $a^0_{k,m}$ in \cref{eq-eg a0km initial data for ode system}, and, for $p,q \in \{1,\dots,m\}$, 
\begin{equation}\label{eq:initial data alpha beta ii}
    \begin{aligned} 
    &\alpha_{1,p,0}^{k,m}(0) \! = \! \int_\Omega \frac{a^0_{k,m}(x,y)}{2\pi^2} \cos p x \, \ud x \, \ud y, &&\alpha_{3,p,0}^{k,m}(0) \!=\! \int_\Omega \frac{a^0_{k,m}(x,y)}{2\pi^2} \sin p x \, \ud x \, \ud y, \\ 
    &\alpha_{1,0,q}^{k,m}(0)\! =\! \int_\Omega \frac{a^0_{k,m}(x,y)}{2\pi^2} \cos q y \, \ud x \, \ud y, &&\alpha_{2,0,q}^{k,m}(0) \!=\! \int_\Omega \frac{a^0_{k,m}(x,y)}{2\pi^2} \sin q y \, \ud x \, \ud y, \\ 
    &\alpha_{1,p,q}^{k,m}(0) \!=\! \int_\Omega \! \frac{a^0_{k,m}(x,y)}{\pi^2} \! \cos p x \! \cos q y \ud x \ud y,\! &&\alpha_{3,p,q}^{k,m}(0) \!=\! \int_\Omega \! \frac{a^0_{k,m}(x,y)}{\pi^2} \! \sin p x \! \cos q y \ud x \ud y , \\ 
    &\alpha_{2,p,q}^{k,m}(0) \!=\! \int_\Omega \! \frac{a^0_{k,m}(x,y)}{\pi^2} \! \cos p x \! \sin q y \ud x \ud y,\! &&\alpha_{4,p,q}^{k,m}(0) \!=\! \int_\Omega \! \frac{a^0_{k,m}(x,y)}{\pi^2} \! \sin p x \! \sin q y  \ud x \ud y, 
    \end{aligned}
\end{equation}
and the formulas for the coefficients $\{\beta_{i,p,q}^{k,m}\}_{i,p,q,k,m}$ are identical, with the exception that $b^{0}_{k,m}$ replaces $a^0_{k,m}$ on the right-hand sides of the previous equations. Note that the above are well-defined real numbers due to the integrability of the initial data $a^0_{k,m},b^0_{k,m}$ which is inherited from the integrability of $f_0 \in L^2_{per}(\Upsilon)$.

\section{Properties of the periodic heat kernel}\label{sec:complementary-sec-regularity}
Below we provide proofs of \cref{lem:properties of periodic kernel} and \cref{lem:generate periodic solutions}, respectively.
\begin{proof}[Proof of \cref{lem:properties of periodic kernel}]
\textit{Step 1 (smoothness away from initial time): } Begin by noting that, by a standard argument using the Weierstra{\ss} M-test, the series representation \cref{eq:periodic heat kernel def} for $\Phi$ is well-defined, and moreover can repeatedly be differentiated term-by-term for positive times. As such, $\Phi \in C^\infty((0,\infty)\times \mathbb{R})$, and $\partial_t \Phi(t,x) = -\frac{1}{\pi}\sum_{n=1}^\infty n^2 e^{-n^2 t} \cos nx = \partial_{xx} \Phi(t,x)$ for all $(t,x) \in (0,\infty)\times \mathbb{R}$. 

\textit{Step 2 (nonnegativity and $L^1$-norm): } Recall (\textit{cf.}~\cite[Section 21.3]{watson}) the infinite product representation for the Jacobi theta-3 function. For every $(t,x) \in (0,\infty)\times\mathbb{R}$, 
\begin{equation*}
    \Phi(t,x) = \frac{1}{2\pi}\theta_3\left( \frac{x}{2}, e^{-t} \right) = \frac{1}{2\pi} \prod_{n=1}^\infty (1-e^{-2nt})(1+2 e^{-t(2n-1)}\cos x + e^{-t(4n-2)}). 
\end{equation*}
By the Young inequality, $1 + e^{-t(4n-2)} \geq 2e^{-t(2n-1)}$, and hence every term in the above product is nonnegative. As such, $\Phi$ itself is nonnegative in $(0,\infty)\times\mathbb{R}$. Meanwhile, integrating the series term by term, we obtain $\int_{0}^{2\pi}\Phi(t,x) \, \ud x = 1$ for every $t\in(0,\infty)$. It therefore follows that $\Vert \Phi(t,\cdot) \Vert_{L^1([0,2\pi])} = \int_0^{2\pi} \Phi(t,x) \, \ud x = 1$ for all $t \in (0,\infty)$. Also, the Plancherel Theorem yields $\Vert \Phi(t,\cdot) \Vert^2_{L^2([0,2\pi])} = 1 + \sum_{n=1}^\infty e^{-2n^2 t}$. Integrating in time (using also $\sum n^{-2}<+\infty$) yields \cref{eq:L2L2 for kernel}. 

\textit{Step 3 (constant $L^2$-norm): } Furthermore, given any $(t,x) \in (0,T]\times[0,2\pi]$, successive changes of variables yield 
\begin{equation*}
    \begin{aligned}
        \int_0^t \int_0^{2\pi} \Phi(t-s,x-y)^2 \, \ud y \, \ud s &= \int_0^t \int_0^{2\pi} \Phi(\tau,x-y)^2 \, \ud y \, \ud \tau \\ 
        &= \!\int_0^t \int_{0}^{x}\! \Phi(\tau,w)^2 \, \ud w \, \ud \tau \! + \! \int_0^t \int_{x-2\pi}^{0}\! \Phi(\tau,w\!+\!2\pi)^2 \, \ud w \, \ud \tau \\ 
        &= \int_0^t \int_{0}^{2\pi} \Phi(\tau,w)^2 \, \ud w \, \ud \tau, 
    \end{aligned}
\end{equation*}
as required, where we also used the periodicity of $\Phi$ to obtain the second equality. 

\textit{Step 4 (convergence to initial data): } It remains to verify the initial data. Fix any $2\pi$-periodic $C^2$ function $\psi$. Then, the elementary theory of Fourier series shows that $\psi$ admits the Fourier representation $\psi(x) = \frac{1}{2\pi} a_0 + \frac{1}{\pi}\sum_{n=1}^\infty a_n \cos n x + \frac{1}{\pi}\sum_{n=1}^\infty b_n \sin n x$ a.e.~$x \in \mathbb{R}$, where, for $n \in \mathbb{N}$, 
\begin{equation}\label{eq:exp psi fourier for reg}
    \begin{aligned}
        a_0 = \int_0^{2\pi} \psi(x) \, \ud x, \quad & a_n = \int_0^{2\pi} \psi(x) \cos n x \, \ud x, \quad b_n = \int_0^{2\pi} \psi(x) \sin n x \, \ud x. 
    \end{aligned}
\end{equation}
Note that, due to the $C^2$-regularity of $\psi$ and its periodicity, two consecutive integration by parts yields $a_n = -\frac{1}{n^2}\int_0^{2\pi} \psi''(x) \cos n x \, \ud x$ and $b_n = -\frac{1}{n^2}\int_0^{2\pi} \psi''(x) \sin n x \, \ud x$ so that there exists a positive constant $C_\psi$ depending on $\psi$ alone such that 
\begin{equation}\label{eq:decay fourier coeffs an bn}
|a_n|+|b_n| \leq C_\psi n^{-2} \qquad \forall n\in\mathbb{N}. 
\end{equation}
Hence, $| \psi(0) - \int_{0}^{2\pi} \Phi(t,x) \psi(x) \, \ud x | = \frac{1}{\pi}| \sum_{n=1}^\infty (1-e^{-n^2 t}) a_n | \leq \frac{C_\psi}{\pi}  \sum_{n=1}^\infty n^{-2}(1-e^{-n^2 t})$, and, by the Dominated Convergence Theorem applied to the atomic measure (using that $\sum n^{-2} < +\infty$), the right-hand side vanishes in the limit as $t \to 0^+$. 
\end{proof}

\begin{proof}[Proof of \cref{lem:generate periodic solutions}]
Recall from \cref{lem:properties of periodic kernel} that $\Phi \in C^\infty((0,\infty)\times\mathbb{R})$. Since $\psi \in L^1([0,2\pi])$, an application of the Dominated Convergence Theorem shows that $\Psi \in C^\infty((0,\infty)\times\mathbb{R})$, and that for any $m,n\in\mathbb{N}\cup\{0\}$, there holds $\partial_t^{(m)}\partial_x^{(n)}\Psi(t,x) = \int_0^{2\pi} \partial_t^{(m)}\partial_x^{(n)} \Phi(t,x-y) \psi(y) \, \ud y$ for all $(t,x) \in (0,\infty) \times\mathbb{R}$. In turn, we directly obtain that $\partial_t \Psi - \partial_{xx}\Psi = 0$ pointwise in $(0,\infty)\times\mathbb{R}$. 

It remains to verify the initial condition. Suppose for the time being that $\psi$ is smooth enough, say $\psi \in C^2_{per}([0,2\pi])$. From the Fourier series decomposition \cref{eq:exp psi fourier for reg} for $\psi$ we compute, using the Fubini--Tonelli Theorem (or directly using the convolution result for Fourier series), the Fourier coefficients of $\Psi$: 
\begin{equation}\label{eq:exp Psi fourier for reg}
    \begin{aligned}
        &\tilde{a}_0(t) = a_0, \quad \tilde{a}_n(t) = e^{-n^2 t}a_n, \quad \tilde{b}_n(t) = e^{-n^2 t}b_n \qquad \text{for } n \in \mathbb{N}. 
    \end{aligned}
\end{equation}
Using the Plancherel Theorem for Fourier series of square integrable functions we get (up to a multiplicative constant) 
\begin{equation}\label{eq:convergence-L^2}
    \Vert \Psi(t,\cdot) - \psi \Vert^2_{L^2([0,2\pi])} = \sum_{n=1}^\infty (1-e^{-n^2 t})^2 (a_n^2 + b_n^2) \to 0 \qquad \text{as } t \to 0^+, 
\end{equation}
where we used the decay estimate \cref{eq:decay fourier coeffs an bn} and the Dominated Convergence Theorem applied to the atomic measure.

When $\psi$ is only $L^1_{per}([0,2\pi])$, we consider $\{\psi_m\}_{m\in\mathbb{N}}$ a sequence in $C^2_{per}([0,2\pi])$ approximating $\psi$ in $L^1([0,2\pi])$. Then, given any $\varepsilon>0$, there exists $N(\varepsilon) \in \mathbb{N}$ such that $\Vert \psi - \psi_m \Vert_{L^1([0,2\pi])} < \varepsilon/3$ $\forall m \geq N(\varepsilon)$. We then have 
\begin{equation*}
    \begin{aligned}
        \Vert \Psi(t,\cdot) - \psi \Vert_{L^1([0,2\pi])} \leq \!\Vert \Psi(t,\!\cdot) - \Psi_m(t,\cdot) \Vert_{L^1([0,2\pi])} &+ \Vert \Psi_m(t,\cdot) - \psi_m \Vert_{L^1([0,2\pi])} \\ 
        &+ \Vert \psi_m - \psi \Vert_{L^1([0,2\pi])}, 
    \end{aligned}
\end{equation*}
where $\Psi_m = \int_0^{2\pi} \Phi(t,x-y) \psi_m(y) \, \ud y$. Note that the second term in the previous expression vanishes in the limit as $t \to 0^+$ by the argument in the previous paragraph. Next, using the Fubini--Tonelli Theorem to exchange the order of the integrals, $\Vert \Psi(t,\cdot) - \Psi_m(t,\cdot) \Vert_{L^1([0,2\pi])} \leq \Vert \Phi(t,\cdot) \Vert_{L^1([0,2\pi])} \Vert \psi - \psi_m \Vert_{L^1([0,2\pi])} = \Vert \psi - \psi_m \Vert_{L^1([0,2\pi])},$ where we used from \cref{lem:properties of periodic kernel} that $ \Vert \Phi(t,\cdot) \Vert_{L^1([0,2\pi])} = 1$ for every $t \in (0,\infty)$. It follows that $\Vert \Psi(t,\cdot) - \psi \Vert_{L^1([0,2\pi])} < 2\varepsilon/3 + \Vert \Psi_m - \psi_m \Vert_{L^1([0,2\pi])}$ for all $m \geq N(\varepsilon)$. Thus, having fixed $m$ large enough, by now letting $t \to 0^+$ and using the result from the previous paragraph, we get $\Vert \Psi(t,\cdot) - \psi \Vert_{L^1([0,2\pi])}$ vanishes in the limit as $t \to 0^+$. 
\end{proof}

\section{Approximation Lemma}\label{sec:appendix-approximation}

\begin{lemma}\label{lem:approximation-initial-data}
Let $f\in L^2_{per}(\Omega;(H^1_{per})'(0,2\pi))$ be nonnegative and such that $\rho(\x):=\langle f(\x,\cdot), 1 \rangle_{(H^1_{per})'\times H^1_{per}}\in [0,1]$ for a.e.~$\x\in\Omega$. There exists a sequence $(f_\varepsilon)_{\varepsilon>0}$ of nonnegative elements of $C^\infty_{per}(\bar{\Upsilon})$ such that 
\begin{equation}\label{eq:regularised initial data-app}
    \Vert f - f_\varepsilon \Vert_{L^2_{per}(\Omega;(H^{1}_{per})'(0,2\pi))} \to 0 \qquad \text{as } \varepsilon \to 0, 
\end{equation}
with $\rho_\varepsilon(\x) = \int_0^{2\pi} f_\varepsilon(\x,\theta) \, \ud \theta \in [0,1]$ for a.e.~$\x \in \Omega$. 
\end{lemma}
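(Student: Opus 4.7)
The plan is to regularise $f$ by convolution against a nonnegative, tensor-product, periodic mollifier, chosen so as to preserve both the sign and the mass constraint. Let $\eta^1_\varepsilon \in C^\infty_{per}(\Omega)$ and $\eta^2_\varepsilon \in C^\infty_{per}((0,2\pi))$ be standard nonnegative periodic mollifiers with unit mass, supported modulo $2\pi$ in balls of radius $\varepsilon$. Since $\theta \mapsto \eta^2_\varepsilon(\theta - \cdot) \in H^1_{per}(0,2\pi)$ for each $\theta$, I define
\begin{equation*}
f_\varepsilon(\x,\theta) := \int_\Omega \eta^1_\varepsilon(\x - \x') \langle f(\x', \cdot), \eta^2_\varepsilon(\theta - \cdot) \rangle_{(H^1_{per})' \times H^1_{per}} \, \ud \x'.
\end{equation*}
Cauchy--Schwarz for duality shows the integrand lies in $L^2(\Omega)$ as a function of $\x'$ (uniformly in $(\x,\theta)$), so $f_\varepsilon$ is well-defined, and smoothness and periodicity on $\bar\Upsilon$ follow by differentiating under the integral and duality pairing.

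For the structural properties I first use the one-dimensional Sobolev embedding $H^1_{per}(0,2\pi) \hookrightarrow C_{per}([0,2\pi])$ to extend $f(\x',\cdot) \in (H^1_{per})'(0,2\pi)$ -- nonnegative on nonnegative test functions by hypothesis -- to a nonnegative Radon measure for a.e.~$\x' \in \Omega$ via Riesz--Markov. Then $\langle f(\x',\cdot), \eta^2_\varepsilon(\theta - \cdot) \rangle \geq 0$, and $f_\varepsilon \geq 0$ follows from $\eta^1_\varepsilon \geq 0$. For the mass constraint, periodicity yields $\int_0^{2\pi} \eta^2_\varepsilon(\theta - \theta') \, \ud \theta = 1$ for every $\theta'$; swapping the $\theta$-integral with the duality pairing (a Bochner identity justified by smoothness of $\theta \mapsto \eta^2_\varepsilon(\theta - \cdot)$ as an $H^1_{per}$-valued map) then gives
\begin{equation*}
\rho_\varepsilon(\x) = \int_\Omega \eta^1_\varepsilon(\x - \x') \langle f(\x', \cdot), 1 \rangle \, \ud \x' = (\eta^1_\varepsilon * \rho)(\x) \in [0,1],
\end{equation*}
since $\rho \in [0,1]$ a.e.~and $\eta^1_\varepsilon$ is a probability density.

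The main analytic step is the norm convergence \cref{eq:regularised initial data-app}. Setting $g_\varepsilon := \eta^1_\varepsilon *_{\x} f$, so that $f_\varepsilon = \eta^2_\varepsilon *_\theta g_\varepsilon$, I split
\begin{equation*}
f_\varepsilon - f = \eta^2_\varepsilon *_\theta (g_\varepsilon - f) + (\eta^2_\varepsilon *_\theta f - f).
\end{equation*}
Fourier expansion on the circle identifies $\|u\|_{(H^1_{per})'(0,2\pi)}^2$ with $\sum_{n\in\Z} (1+n^2)^{-1}|\hat u(n)|^2$ and shows that $\eta^2_\varepsilon *_\theta$ acts as the Fourier multiplier $\hat \eta^2_\varepsilon(n)$, with $|\hat \eta^2_\varepsilon(n)| \leq 1$ (since $\eta^2_\varepsilon \geq 0$ has unit mass). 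Hence its operator norm on $(H^1_{per})'(0,2\pi)$ is at most $1$, and the first term is dominated by $\|g_\varepsilon - f\|_{L^2(\Omega;(H^1_{per})')} \to 0$, which is standard Bochner-space mollification in the Hilbert space $(H^1_{per})'(0,2\pi)$. For the second term,
\begin{equation*}
\|\eta^2_\varepsilon *_\theta f(\x,\cdot) - f(\x,\cdot)\|_{(H^1_{per})'}^2 = \sum_{n\in\Z} (1+n^2)^{-1} |\hat \eta^2_\varepsilon(n) - 1|^2 |\hat f(\x, n)|^2
\end{equation*}
tends to zero pointwise a.e.~$\x$ as $\varepsilon \to 0$ by dominated convergence in $n$ (summable dominant $4(1+n^2)^{-1}|\hat f(\x,n)|^2$), and is itself bounded by $4\|f(\x,\cdot)\|^2_{(H^1_{per})'} \in L^1(\Omega)$. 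A second application of dominated convergence in $\x$ closes the argument.

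The main obstacle I anticipate is giving a clean interpretation of the hypothesised nonnegativity of $f$ as a distribution in $\theta$ and rigorously swapping the $\theta$-integral with the duality pairing in the mass-constraint computation; both rely on treating $f(\x',\cdot)$ as a Radon measure on the circle via Riesz--Markov. With that interpretation secured, all remaining steps reduce to the two dominated-convergence arguments above.
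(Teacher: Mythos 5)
Your proof is correct, and it takes a genuinely different route from the paper at the crucial convergence step. The paper also mollifies by a tensor-product kernel, but it deliberately uses an asymmetric scaling ($\varepsilon$ in $\x$, $\varepsilon^\alpha$ in $\theta$ with $\alpha>2$), splits $f-f_\varepsilon$ into two terms $I_\varepsilon$ and $J_\varepsilon$ corresponding to spatial translation error and angular mollification error respectively, and estimates $J_\varepsilon$ by pointwise bounds on the test function $\varphi$ together with a careful dissection of the support of the periodised kernel into several regions; the asymmetric scaling is needed precisely to beat the factor $\varepsilon^{-1}$ coming from the $L^2$ norm of the spatial mollifier. You avoid all of that: by commuting the order of the two mollification operators and observing, via Fourier analysis on the circle, that $\eta^2_\varepsilon *_\theta$ acts on $(H^1_{per})'(0,2\pi)$ as a Fourier multiplier with symbol bounded by one (so the operator norm is $\le 1$), you reduce the whole estimate to two independent, scale-decoupled dominated-convergence arguments — one for the Bochner-space mollification in $\x$, one for the $\theta$-multiplier tending to the identity. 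This yields a cleaner argument that works at a single mollification scale $\varepsilon$ and sidesteps the bookkeeping entirely. You also make the interpretation of the hypothesis ``$f$ nonnegative'' explicit via Riesz--Markov (a positive element of $(H^1_{per})'(0,2\pi)$ is automatically a finite positive Radon measure on the circle), which the paper leaves implicit. The trade-off is that your argument leans on the multiplier/Fourier characterisation of the $(H^1_{per})'$ norm and on Bochner-integral technology, whereas the paper's estimates are more hands-on; but as a matter of rigour and economy your version is preferable.
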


\begin{proof}%[Proof of \cref{claim:regularised initial data}]

  \textit{Step 1 (defining the approximation): } Let $\beta\in C^\infty_c(\R^2)$ and $\gamma\in C^\infty_c(\R)$ be nonnegative smooth bump functions, chosen such that $\beta$ is compactly supported inside $\Omega$ and $\gamma$ is compactly supported inside the interval $[0,2\pi)$, and without loss of generality 
\begin{equation}\label{eq:product mollifier has mass one}
    \bigg(\int_\Omega \beta(\y)\, \ud\y \bigg) \bigg(\int_{0}^{2\pi} \gamma(\theta)  \, \ud \theta\bigg) = 1.
\end{equation}
Moreover, we impose that $\beta$ is radially symmetric about the point $(\pi,\pi)$ and that $\gamma$ is symmetric about the point $\pi$. Consider the associated sequences $(\beta_\varepsilon)_{\varepsilon>0}$ and $(\gamma_\varepsilon)_{\varepsilon>0}$ \[
\beta_\varepsilon(\x) := \varepsilon^{-2}\beta(\x/\varepsilon), \qquad  \gamma_\varepsilon(\theta):=\varepsilon^{-\alpha}\gamma(\theta/\varepsilon^\alpha),
\] 
for $\alpha > 2$. These functions are smooth and nonnegative, and $\beta_\varepsilon$ is compactly supported in the square $\varepsilon\Omega$ while $\gamma_\varepsilon$ is compactly supported in $[0,2\pi\varepsilon^\alpha]$. We define $\tilde{\gamma}_\varepsilon$ to be the $2\pi$-periodic extension of $\gamma_\varepsilon$ from $[0,2\pi\varepsilon^\alpha]$ to all of $\mathbb{R}$, so that $\tilde{\gamma}_\varepsilon \in H^1_{per}(0,2\pi)$. Similarly, we define $\tilde{\beta}_\varepsilon$ to be the $\Omega$-periodic extension of $\beta_\varepsilon$ to all of $\mathbb{R}^2$. 

In turn, we define the sequence of approximations $(f_\varepsilon)_{\varepsilon>0}$ by the explicit formula 
\begin{equation*}
    f_\varepsilon(\x,\theta) := \int_\Omega \tilde{\beta}_\varepsilon(\x-\y) \langle f(\y,\cdot) , \tilde{\gamma}_\varepsilon(\theta-\cdot) \rangle\, \ud \y \qquad \text{a.e.~}(\x,\theta) \in \Upsilon, 
\end{equation*}
where, throughout the rest of this proof, the duality product is understood to be in the angle variable only, and denotes the duality product of $H^1_{per}(0,2\pi)$. 

Observe firstly that, due to the assumption that $f$ is nonnegative, we immediately have $f_\varepsilon$ nonnegative a.e.~in $\Upsilon$. Moreover, one may readily check (for instance by means of difference quotients) that $f_\varepsilon$ is smooth. Similarly, it is straightforward to verify that $f_\varepsilon$ is itself $\Upsilon$-periodic. Thus, $f_\varepsilon \in C^\infty_{per}(\bar{\Upsilon})$ for every $\varepsilon>0$.

  \textit{Step 2 (boundedness the approximate density): } Note that, in view of the aforementioned nonnegativity of $f_\varepsilon$, there holds the inequality $   0 \leq \int_0^{2\pi} f_\varepsilon(\x,\theta) \, \ud \theta 
 = \rho_\varepsilon(\x)$. Additionally, we have $\rho_\varepsilon(\x) = \int_0^{2\pi} (\int_\Omega \tilde{\beta}_\varepsilon(\x-\y) \langle f(\y,\cdot), \tilde{\gamma}_\varepsilon(\theta - \cdot) \rangle \, \ud \y ) \, \ud \theta$, and a standard argument involving Riemann sums and the Arzel\`{a}--Ascoli Theorem (using also the Fubini--Tonelli Theorem and the Dominated Convergence Theorem) leads to being able to commute the duality bracket with the integral in the angle variable, \textit{i.e.}, 
  \begin{equation}\label{eq:rho eps formula after duality bracket commute}
     \rho_\varepsilon(\x) =  \int_\Omega \tilde{\beta}_\varepsilon(\x-\y) \big\langle f(\y,\cdot), \int_0^{2\pi}\tilde{\gamma}_\varepsilon(\theta - \cdot) \, \ud \theta \big\rangle \, \ud \y. 
 \end{equation}
 Observe now that the integral inside the duality bracket is a constant. Indeed, since $\tilde{\gamma}_\varepsilon$ is smooth, and therefore bounded over any compact interval, the Dominated Convergence Theorem yields $\frac{d}{ds}\int_0^{2\pi} \tilde{\gamma}_\varepsilon(\theta-s) \, \ud \theta = -\int_0^{2\pi} \tilde{\gamma}_\varepsilon'(\theta-s) \, \ud \theta = \tilde{\gamma}_\varepsilon(-s) - \tilde{\gamma}_\varepsilon(2\pi-s) = 0$, where the final equality follows from the periodicity of $\tilde{\gamma}_\varepsilon$; recall that it is the periodic extension to all of $\mathbb{R}$ of the original mollifier $\gamma_\varepsilon$. It follows that 
 \begin{equation}\label{eq:computing integral of gamma tilde}
     \begin{aligned}
        \int_0^{2\pi} \tilde{\gamma}_\varepsilon(\theta-s) \, \ud \theta = \int_0^{2\pi} \tilde{\gamma}_\varepsilon(\theta) \, \ud \theta = \int_0^{2\pi} \gamma_\varepsilon(\theta) \, \ud \theta = \int_0^{2\pi} \gamma(\theta) \, \ud \theta \qquad \forall s \in \mathbb{R}, 
     \end{aligned}
 \end{equation}
where we recall in passing that $\gamma_\varepsilon$ is supported in $[0,2\pi\varepsilon^\alpha]$, while $\gamma$ is supported in $[0,2\pi]$. Hence, we get $\rho_\varepsilon(\x) = (  \int_0^{2\pi} \gamma(\theta) \, \ud \theta )(\int_\Omega \tilde{\beta}_\varepsilon(\x-\y) \langle f(\y,\cdot),1 \rangle \, \ud \y)$. 

Also, applying the Dominated Convergence Theorem and the Fubini--Tonelli Theorem gives $\partial_{x_1} \int_\Omega \tilde{\beta}_\varepsilon(\x-\y) \, \ud \y = \int_0^{2\pi} \int_0^{2\pi} \partial_{1} \tilde{\beta}_\varepsilon (x_1-y_1,x_2-y_2) \, \ud y_1 \, \ud y_2 = -\int_0^{2\pi}  (\tilde{\beta}_\varepsilon (x_1-2\pi,x_2-y_2) - \tilde{\beta}_\varepsilon (x_1,x_2-y_2)) \, \ud y_2 = 0$, where the final equality follows from the periodicity of $\tilde{\beta}_\varepsilon$; recall that this latter function is the $\Omega$-periodic extension of $\beta_\varepsilon$. An identical argument shows that $\partial_{x_2}\int_\Omega \tilde{\beta}_\varepsilon(\x-\y) \, \ud \y = 0$. Arguing as per \cref{eq:computing integral of gamma tilde} and making use of the symmetry property of $\beta$ about the point $(\pi,\pi)$, we deduce 
\begin{equation}\label{eq:equality translation only for periodic}
    \int_\Omega \tilde{\beta}_\varepsilon(\x-\y) \, \ud \y = \int_\Omega \beta(\y) \, \ud \y \qquad \forall \x \in \mathbb{R}^2. 
\end{equation}
We emphasise that the above property is not true for the function $\beta_\varepsilon$; one must take its periodic extension in order to obtain this translation invariance, since the integral ranges only over the square $\Omega$, and not over all of $\mathbb{R}^2$. 

Returning to \cref{eq:rho eps formula after duality bracket commute} and using that $\langle f(\y,\cdot),1 \rangle = \rho(\y)$ which takes values in the interval $[0,1]$, it follows that $0 \leq \rho_\varepsilon(\x) \leq (  \int_0^{2\pi} \gamma(\theta) \, \ud \theta )(\int_\Omega \tilde{\beta}_\varepsilon(\x-\y)  \, \ud \y ) = (\int_{0}^{2\pi} \gamma(\theta)  \, \ud \theta)(\int_\Omega \beta(\y)\, \ud\y) = 1$, as required, where we used the nonnegativity of $\tilde{\beta}_\varepsilon$ in the second inequality and \cref{eq:product mollifier has mass one} to obtain the final equality.

  \textit{Step 3 (convergence of approximations): } We now study the convergence of the sequence $(f_\varepsilon)_{\varepsilon>0}$ in $L^2_{per}(\Omega;(H^{1}_{per})'(0,2\pi))$. Recall that 
\begin{align*}
    \Vert f(\x,\cdot) - f_\varepsilon(\x,\cdot) \Vert_{(H^1_{per})'(0,2\pi)} &= \sup_{\overset{\varphi \in C^\infty_{per}(0,2\pi)}{\Vert \varphi \Vert_{H^1(0,2\pi)} \leq 1}} |\langle f(\x,\cdot) - f_\varepsilon(\x,\cdot) , \varphi \rangle|,
\end{align*}
for a.e.~$\x \in \Omega$, where the duality product above is that of $(H^1_{per})'(0,2\pi)$.
Note that, due to the regularity of $f_\varepsilon$, one may write explicitly 
\begin{equation}\label{eq:laying it out regularisation}
    \langle f(\x,\cdot) - f_\varepsilon(\x,\cdot), \varphi \rangle = \langle f(\x,\cdot) , \varphi \rangle - \int_0^{2\pi} \bigg(\int_\Omega \tilde{\beta}_{\varepsilon}(\x-\y)\langle f(\y,\cdot) , \tilde{\gamma}_\varepsilon(\theta-\cdot) \rangle \, \ud \y \bigg) \varphi(\theta) \, \ud \theta. 
\end{equation}
Using also the Fubini--Tonelli Theorem and the Dominated Convergence Theorem, a standard argument involving Riemann sums and the Arzel\`{a}--Ascoli Theorem shows that this latter term is equal to $\int_\Omega \tilde{\beta}_\varepsilon(\x-\y) \big\langle f(\y,\cdot),\int_0^{2\pi} \tilde{\gamma}_\varepsilon(\theta-\cdot)\varphi(\theta) \, \ud \theta \big\rangle \, \ud \y$. Using the equalities in \cref{eq:product mollifier has mass one}, \cref{eq:computing integral of gamma tilde}, and \cref{eq:equality translation only for periodic}, the first term on the right-hand side of \cref{eq:laying it out regularisation} can be written as $\langle f(\x,\cdot) , \varphi\rangle =  \int_\Omega   \tilde{\beta}_\varepsilon(\x-\y) \big\langle f(\x,\cdot) , \varphi(\cdot) \int_0^{2\pi}\tilde{\gamma}_\varepsilon(\theta)\, \ud \theta\big\rangle  \, \ud \y$ a.e.~$\x \in \Omega$. It therefore follows from \cref{eq:laying it out regularisation} that 
\begin{equation}\label{eq:splitting main for regularisation}
    \begin{aligned}
        \langle f(\x,\cdot) - f_\varepsilon(\x,\cdot), \varphi \rangle &= \int_\Omega \tilde{\beta}_\varepsilon(\x-\y) \big\langle f(\x,\cdot) - f(\y,\cdot), \varphi(\cdot) \int_0^{2\pi}\tilde{\gamma}_\varepsilon(\theta)\, \ud \theta\big\rangle  \, \ud \y \\ 
        &\!+\! \int_\Omega \tilde{\beta}_\varepsilon(\x\!-\!\y) \big\langle \! f(\y,\cdot) \!, \! \varphi(\cdot) \int_0^{2\pi}\tilde{\gamma}_\varepsilon(\theta)\, \ud \theta \!-\! \int_0^{2\pi} \tilde{\gamma}_\varepsilon(\theta\!-\! \cdot)\varphi(\theta) \, \ud \theta\big\rangle  \, \ud \y \\ 
        &=:I_\varepsilon(\x) + J_\varepsilon(\x). 
    \end{aligned}
\end{equation}
The term $I_\varepsilon$ may be rewritten as $I_\varepsilon(\x) = c_\gamma \int_\Omega \tilde{\beta}_\varepsilon(\x-\y) \langle f(\x,\cdot) - f(\y,\cdot), \varphi \rangle \, \ud \y$, where $c_\gamma = \int_0^{2\pi} \gamma(\theta) \, \ud \theta$. Using that $\Vert \varphi \Vert_{H^1_{per}(0,2\pi)} \leq 1$, we get 
\begin{equation}\label{eq:I eps estimate rookie}
    |I_\varepsilon(\x)| \leq c_\gamma \int_\Omega \tilde{\beta}_\varepsilon(\x-\y) \Vert f(\x,\cdot) - f(\y,\cdot) \Vert_{(H^1_{per})'(0,2\pi)} \, \ud \y, 
\end{equation}
however, for the time being, we concentrate on the term $J_\varepsilon$. We have 
\begin{equation}\label{eq:Jeps estimate rookie}
    |J_\varepsilon(\x)|\! \leq\! \int_\Omega \!\tilde{\beta}_\varepsilon(\x-\y)\Vert f(\y\!,\!\cdot)\Vert_{(H^1_{per})'(0,2\pi)} \big\Vert\! \int_0^{2\pi}\! \big( \tilde{\gamma}_\varepsilon(\theta) \varphi(\cdot) - \tilde{\gamma}_\varepsilon(\theta-\cdot)\varphi(\theta) \big) \ud \theta \big\Vert_{H^1(0,2\pi)} \ud \y, 
\end{equation}
and, in view of \cref{eq:computing integral of gamma tilde}, we notice that, for every $s\in\mathbb{R}$, $\int_0^{2\pi} \big( \tilde{\gamma}_\varepsilon(\theta) \varphi(s) - \tilde{\gamma}_\varepsilon(\theta-s)\varphi(\theta)\big) \ud \theta = -\int^{2\pi-s}_{-s} \tilde{\gamma}_\varepsilon (w) \big( \varphi(w+s) - \varphi(s) \big) \ud w$. Thus, using also the Dominated Convergence Theorem to differentiate under the integral in the previous right-hand side, we get, for every $s\in\mathbb{R}$, $\frac{d}{ds}\int_0^{2\pi} \big( \tilde{\gamma}_\varepsilon(\theta) \varphi(s) - \tilde{\gamma}_\varepsilon(\theta-s)\varphi(\theta)\big) \, \ud \theta = -\int^{2\pi-s}_{-s} \tilde{\gamma}_\varepsilon (w) \big( \varphi'(w+s) - \varphi'(s) \big) \, \ud w$, where we remark that the derivative contribution due to the integral limits vanishes due to the fact that $\tilde{\gamma}_\varepsilon$ and $\varphi$ are $2\pi$-periodic. It follows that 
\begin{equation}\label{eq:splitting H1per bound into k1 and k2}
    \begin{aligned}
        \big\Vert \int_0^{2\pi} \tilde{\gamma}_\varepsilon&(\theta) \varphi(\cdot) - \tilde{\gamma}_\varepsilon(\theta-\cdot)\varphi(\theta) \, \ud \theta \big\Vert_{H^1_{per}(0,2\pi)}^2 = \\  &\int_0^{2\pi} \big| \int^{2\pi-s}_{-s} \tilde{\gamma}_\varepsilon (w) \big( \varphi(w+s) - \varphi(s) \big) \, \ud w \big|^2 \, \ud s \\ 
        &+ \int_0^{2\pi} \big| \int^{2\pi-s}_{-s} \tilde{\gamma}_\varepsilon (w) \big( \varphi'(w+s) - \varphi'(s) \big) \, \ud w \big|^2 \, \ud s =: K^1_\varepsilon + K^2_\varepsilon. 
    \end{aligned}
\end{equation}
Using the Fundamental Theorem of Calculus along with the fact that $\Vert \varphi^{(m)}\Vert_{L^\infty(\mathbb{R})}<+\infty$ for any $m \in \mathbb{N} \cup \{0\}$ due to the smoothness and periodicity of $\varphi$, we get $| \int^{2\pi-s}_{-s} \tilde{\gamma}_\varepsilon (w) \big( \varphi(w+s) - \varphi(s) \big) \, \ud w | \leq \Vert \varphi'\Vert_{L^\infty(\mathbb{R})} \int_{-s}^{2\pi-s} \tilde{\gamma}_\varepsilon(w) |w| \, \ud w$, and (using the change of variables $\theta = w+s$) 
\begin{equation*}
    K^1_\varepsilon \leq \Vert \varphi'\Vert_{L^\infty(\mathbb{R})}^2 \int_0^{2\pi} \big| \int_{0}^{2\pi} \tilde{\gamma}_\varepsilon(\theta-s) |\theta-s| \, \ud \theta \big|^2 \, \ud s. 
\end{equation*}Our strategy is to split the previous right-hand side into three regions in order to bound it. While $s$ is constrained to the interval $[0,2\pi(1-\varepsilon^\alpha)]$, since $\theta \in [0,2\pi]$, we have that $-2\pi(1-\varepsilon^\alpha) \leq \theta -s \leq 2\pi$, in which case $\tilde{\gamma}_\varepsilon(\theta-s)$ is nonzero only when $\theta -s \in [0,2\pi\varepsilon^\alpha]$; we call this \emph{Region 1}.     On the other hand, when $s$ is constrained to the interval $[2\pi(1-\varepsilon^\alpha),2\pi]$, since $\theta \in [0,2\pi]$, we have that $-2\pi \leq \theta-s \leq 2\pi \varepsilon^\alpha$, in which case $\tilde{\gamma}_\varepsilon(\theta-s)$ is nonzero when $\theta -s \in [-2\pi,-2\pi(1-\varepsilon^\alpha)] \cup [0,2\pi \varepsilon^\alpha]$. We therefore have two more regions; \emph{Region 2}, where $s \in [2\pi(1-\varepsilon^\alpha),2\pi]$ and $\theta-s \in [0,2\pi\varepsilon^\alpha]$, and \emph{Region 3}, where $s \in [2\pi(1-\varepsilon^\alpha),2\pi]$ and $\theta-s \in [-2\pi,-2\pi(1-\varepsilon^\alpha)]$. It follows from the triangle inequality and the fact that the function $x \mapsto x^2$ is increasing on $[0,\infty)$ that $K^1_\varepsilon$ is bounded above by the sum of the following three terms, each corresponding to Regions 1, 2, and 3, respectively: 
\begin{equation*}
   \begin{aligned}
    &L^1_\varepsilon := 4\pi^2 \varepsilon^{2\alpha} \Vert \varphi'\Vert_{L^\infty(\mathbb{R})}^2 \int_0^{2\pi(1-\varepsilon^\alpha)} \big( \int_{0}^{2\pi} \tilde{\gamma}_\varepsilon(\theta-s) \, \ud \theta \big)^2 \, \ud s, \\ 
    &L^2_\varepsilon := 4\pi^2 \varepsilon^{2\alpha} \Vert \varphi'\Vert_{L^\infty(\mathbb{R})}^2 \int_{2\pi(1-\varepsilon^\alpha)}^{2\pi} \big( \int_{2\pi(1-\varepsilon^\alpha)}^{2\pi} \tilde{\gamma}_\varepsilon(\theta-s) \, \ud \theta \big)^2 \, \ud s, \\ 
    &L^3_\varepsilon := \Vert \varphi'\Vert_{L^\infty(\mathbb{R})}^2 \int_{2\pi(1-\varepsilon^\alpha)}^{2\pi} \big( \int_{0}^{2\pi\varepsilon^\alpha} \tilde{\gamma}_\varepsilon(\theta-s) (s-\theta)\, \ud \theta \big)^2 \, \ud s, 
   \end{aligned}
\end{equation*}
and it is straightforward to verify, using the nonnegativity of $\tilde{\gamma}_\varepsilon$ and \cref{eq:computing integral of gamma tilde}, the estimates $|L^1_\varepsilon|+|L^2_\varepsilon| \leq C\varepsilon^{2\alpha}$ and $|L^3_\varepsilon| \leq C\varepsilon^\alpha$, for some positive constant $C$ depending only on $c_\gamma$ and $\Vert \varphi'\Vert_{L^\infty(\mathbb{R})}$. It follows that, for a possibly larger constant $C$ depending only on $c_\gamma$ and $\Vert \varphi'\Vert_{L^\infty(\mathbb{R})}$, there holds $
    K^1_\varepsilon \leq C\varepsilon^\alpha$ and, by the same argument, the same bound holds for $K^2_\varepsilon$ (with the only change being that the constant $C$ also depends on $\Vert \varphi''\Vert_{L^\infty(\mathbb{R})}$). We therefore obtain from \cref{eq:splitting H1per bound into k1 and k2}, for a possibly larger constant $C$ depending only on $c_\gamma$, $\Vert \varphi'\Vert_{L^\infty(\mathbb{R})}$, and $\Vert \varphi''\Vert_{L^\infty(\mathbb{R})}$, there holds 
\begin{equation*}
    \begin{aligned}
        \big\Vert \int_0^{2\pi} \tilde{\gamma}_\varepsilon(\theta) \varphi(\cdot) - \tilde{\gamma}_\varepsilon(\theta-\cdot)\varphi(\theta) \, \ud \theta \big\Vert_{H^1_{per}(0,2\pi)}\leq C \varepsilon^{\alpha/2}. 
    \end{aligned}
\end{equation*}
Thus, returning to the bound \cref{eq:Jeps estimate rookie} on $|J_\varepsilon|$, taking supremum over test functions and integrating, followed by applying the Jensen inequality and using the monotonicity of the functions $x \mapsto x^2$ and $x \mapsto x^{1/2}$ on $[0,\infty)$, we get 
\begin{equation}\label{eq:Jeps bound very close}
    \Vert \sup_{\overset{\varphi \in C^\infty_{per}(0,2\pi)}{\Vert \varphi \Vert_{H^1(0,2\pi)} \leq 1}} J_\varepsilon \Vert_{L^2_{per}(\Omega)} \leq C_\Omega \varepsilon^{\alpha/2} \bigg( \int_\Omega \int_\Omega \tilde{\beta}_\varepsilon(\x-\y)^2\Vert f(\y,\cdot)\Vert_{(H^1_{per})'(0,2\pi)}^2  \, \ud \y \, \ud \x \bigg)^{1/2}, 
\end{equation}
where $C_\Omega$ is a positive constant depending only on $|\Omega|$, $c_\gamma$, $\Vert \varphi'\Vert_{L^\infty(\mathbb{R})}$, and $\Vert \varphi''\Vert_{L^\infty(\mathbb{R})}$. Using the Fubini--Tonelli Theorem to evaluate the double integral exactly, using also the same procedure as the one used for \cref{eq:equality translation only for periodic} to get $\int_\Omega \tilde{\beta}_\varepsilon(\x-\y)^2 \, \ud \x  = \varepsilon^{-2}\int_\Omega \beta(\y)^2 \, \ud \y$, 
\begin{equation}\label{eq:main Jeps estimate}
    \Vert \sup_{\overset{\varphi \in C^\infty_{per}(0,2\pi)}{\Vert \varphi \Vert_{H^1(0,2\pi)} \leq 1}} J_\varepsilon \Vert_{L^2_{per}(\Omega)} \leq C_\Omega \varepsilon^{\frac{\alpha}{2} - 1} \Vert f \Vert_{L^2_{per}(\Omega;(H^1_{per})'(0,2\pi))}, 
\end{equation}
for some new positive constant $C_\Omega$ which again only depends on $|\Omega|$, $c_\gamma$, $\Vert \varphi'\Vert_{L^\infty(\mathbb{R})}$, and $\Vert \varphi''\Vert_{L^\infty(\mathbb{R})}$. Notice that, since $\alpha>2$, the above vanishes in the limit as $\varepsilon \to 0$.

Finally, we return to the term $I_\varepsilon$. By taking the supremum over test functions and integrating, we find that $\Vert \sup_{\overset{\varphi \in C^\infty_{per}(0,2\pi)}{\Vert \varphi \Vert_{H^1(0,2\pi)} \leq 1}} I_\varepsilon \Vert_{L^2_{per}(\Omega)}$ is bounded above by 
\begin{equation*}
    \begin{aligned}
        c_\gamma \bigg( \int_\Omega \bigg| \int_\Omega \tilde{\beta}_\varepsilon(\x-\y) \Vert f(\x,\cdot) - f(\y,\cdot) \Vert_{(H^1_{per})'(0,2\pi)} \, \ud \y \bigg|^2 \, \ud \x \bigg)^{1/2} =: c_\gamma L_\varepsilon. 
    \end{aligned}
\end{equation*}
We rewrite the innermost integral above using the change of variables $\y \mapsto \z = \x-\y$. Note that $\z \in [-2\pi,2\pi]\times [-2\pi,2\pi]$ and, moreover, $\tilde{\beta}_\varepsilon(\z)$ is only nonzero when $\z \in \bigcup_{j=1}^4 A^j_\varepsilon$, where $A^1_\varepsilon = [-2\pi,-2\pi(1-\varepsilon)]\times [0,2\pi\varepsilon]$, $A^2_\varepsilon = [0,2\pi\varepsilon]\times[0,2\pi\varepsilon]$, $A^3_\varepsilon = [-2\pi,-2\pi(1-\varepsilon)]\times [-2\pi,-2\pi(1-\varepsilon)]$, $A^4_\varepsilon = [0,2\pi\varepsilon]\times[-2\pi,-2\pi(1-\varepsilon)]$. It therefore follows that, using the triangle inequality for the norm of $L^2(\Omega)$ in the variable $\x$, there holds the inequality 
\begin{equation*}
    \begin{aligned}
        L_\varepsilon &\leq \sum_{j=1}^4 \bigg( \int_\Omega \bigg| \int_{A^j_\varepsilon}  \tilde{\beta}_\varepsilon(\z) \Vert f(\x,\cdot) - f(\x-\z,\cdot) \Vert_{(H^1_{per})'(0,2\pi)} \, \ud \z \bigg|^2 \, \ud \x \bigg)^{1/2} =: \sum_{j=1}^4 L^j_\varepsilon. 
    \end{aligned}
\end{equation*}
Using the Minkowski integral inequality for finite measure spaces, we get 
\begin{equation}\label{eq:final bit before translation convergence}
    \begin{aligned}
        L^j_\varepsilon &\leq \int_{A^j_\varepsilon} \tilde{\beta}_\varepsilon(\z) \bigg( \int_\Omega \Vert f(\x,\cdot) - f(\x-\z,\cdot) \Vert_{(H^1_{per})'(0,2\pi)}^2 \, \ud \x \bigg)^{1/2} \, \ud \z. 
    \end{aligned}
\end{equation}

We now have the following claim, the proof of which we omit as it is a standard exercise. The proof relies on observing that $\mathbf{f}(\x)=\mathbf{f}(\x-\lim_{j\to\infty}\z_j)$ in $(H^1_{per})'(0,2\pi)$ for a.e.~$\x\in\Omega$ and then integrating in $\x$. In order to pass to the limit in $j$, one needs to use the strong continuity property of the shift operator in $L^2_{per}(\Omega;X)$, \textit{i.e.}, $\lim_{\mathbf{w} \to 0}\Vert \tau_{\mathbf{w}}\mathbf{f} - \mathbf{f} \Vert_{L^2_{per}(\Omega;X)} = 0$, where $\tau_\mathbf{w}\mathbf{f}=\mathbf{f}(\cdot+\mathbf{w})$. The proof of this latter result is also straightforward (relying on identifying a convenient dense subset and applying the Dominated Convergence Theorem) and so we omit it. 

\begin{claim}\label{claim:final translation convergence}
Let $f \in L^2_{per}(\Omega;(H^1_{per})'(0,2\pi))$. Suppose that $(\z_j)_{j\in\mathbb{N}}$ is a sequence of points in $\mathbb{R}^2$ converging towards the point $(2\pi m,2\pi n) \in \mathbb{R}^2$ for some fixed $m,n\in\mathbb{Z}$. Then, 
$\lim_{j\to\infty} ( \int_\Omega \Vert f(\x,\cdot) - f(\x-\z_j,\cdot) \Vert_{(H^1_{per})'(0,2\pi)}^2 \, \ud \x)^{1/2} = 0$. 
\end{claim}

By rewriting $\tilde{\beta}_\varepsilon(\z)$ in each of the squares $\{A^j_\varepsilon\}_{j=1}^4$ in terms of the original bump function $\beta$, using the previous claim, and applying the Dominated Convergence Theorem to the right-hand side of \cref{eq:final bit before translation convergence}, we get $\lim_{\varepsilon \to 0} L^j_\varepsilon = 0$ for $j=1,\dots,4$. Hence $\Vert \sup_{\overset{\varphi \in C^\infty_{per}(0,2\pi)}{\Vert \varphi \Vert_{H^1(0,2\pi)} \leq 1}} I_\varepsilon \Vert_{L^2_{per}(\Omega)} \to 0$ as $\varepsilon \to 0$. This, \cref{eq:main Jeps estimate}, and \cref{eq:splitting main for regularisation}, proves \cref{eq:regularised initial data-app}. 
\end{proof}

\section*{Acknowledgments}
This work was carried out while AE and SMS were postdoctoral researchers at FAU Erlangen-N\"{u}rnberg and University of Cambridge, respectively.

\bibliographystyle{siamplain}
\bibliography{active.bib}
\end{document}